\documentclass[a4paper,reqno]{amsart}

\usepackage[british]{babel}

\usepackage{mlmodern}
\DeclareFontFamily{OMX}{mlmex}{}
\DeclareFontShape{OMX}{mlmex}{m}{n}{<->mlmex10}{}
\usepackage[T1]{fontenc}
\usepackage[babel,stretch=10,shrink=10,verbose=errors,selected]{microtype}
\usepackage[strict,english=british]{csquotes}

\usepackage{amsmath,amssymb,amsthm}
\usepackage{mathtools}
\usepackage[all]{onlyamsmath}

\usepackage[
backend=biber,
style=numeric-comp,
sorting=nyt,
sortcites=true,
maxnames=5,
giveninits=true,
date=year,
useprefix=true,
]{biblatex}
\DeclareFieldFormat{eid}{no.~#1}
\usepackage{enumitem}

\usepackage{tikz}
\usetikzlibrary{cd}

\usepackage{xcolor}
\usepackage[nobiblatex]{xurl}
\usepackage[breaklinks=true,pdflang=en-GB,colorlinks=true]{hyperref}
\colorlet{citecolor}{green!75!black}
\colorlet{linkcolor}{red!75!black}
\colorlet{urlcolor}{blue!75!black}
\hypersetup{citecolor=citecolor,linkcolor=linkcolor,urlcolor=urlcolor}
\usepackage{zref-clever}
\zcsetup{cap,nameinlink=false}
\usepackage{keytheorems}

\usepackage{orcidlink}
\usepackage{ellipsis}

\numberwithin{equation}{section}

\newkeytheorem{theorem}[parent=section,style=plain]
\newkeytheorem{proposition}[sibling=theorem,style=plain]
\newkeytheorem{lemma}[sibling=theorem,style=plain]
\newkeytheorem{corollary}[sibling=theorem,style=plain]

\newkeytheorem{definition}[sibling=theorem,style=definition]
\newkeytheorem{example}[sibling=theorem,style=definition]

\newkeytheorem{remark}[sibling=theorem,style=remark]

\newcommand{\set}[2]{\ensuremath{\{\,{#1}\mid{#2}\,\}}}
\newcommand{\card}[1]{\ensuremath{\left|{#1}\right|}}

\newcommand{\id}{\ensuremath{\mathrm{id}}}
\DeclareMathOperator{\tr}{tr}
\DeclareMathOperator{\diag}{diag}
\DeclareMathOperator{\Span}{span}

\newcommand{\NN}{\mathbb{N}}
\newcommand{\ZZ}{\mathbb{Z}}
\newcommand{\QQ}{\mathbb{Q}}
\newcommand{\RR}{\mathbb{R}}
\newcommand{\CC}{\mathbb{C}}

\DeclareMathOperator{\im}{im}
\DeclareMathOperator{\GL}{GL}

\DeclareMathOperator{\Aut}{Aut}
\DeclareMathOperator{\End}{End}

\DeclareMathOperator{\Fix}{Fix}
\DeclareMathOperator{\Tor}{Tor}
\DeclareMathOperator{\Stab}{Stab}

\newcommand{\normalsub}{\trianglelefteq}

\newcommand{\ind}[2]{\ensuremath{[#1\mathbin{:}#2]}}
\newcommand{\R}{\mathcal{R}}
\newcommand{\restr}[2]{{#1}|_{#2}}
\newcommand{\abs}[1]{\ensuremath{\left|{#1}\right|}}

\newcommand{\filt}[2]{\ensuremath{\mathcal{F}_{#1}(#2)}}
\newcommand{\fact}[2]{\ensuremath{\bar{\mathcal{F}}_{#1}(#2)}}
\newcommand{\comp}[2]{\ensuremath{\mathcal{C}_{#1}(#2)}}
\newcommand{\NR}{\(\mathcal{NR}\)}
\newcommand{\TR}{\ensuremath{\tilde{R}}}

\title{Taming Reidemeister zeta functions on virtually polycyclic groups}
\author[S. Tertooy]{Sam Tertooy\ \orcidlink{0000-0002-5750-9153}}
\date{\today}
\address{KU Leuven, Kulak Kortrijk Campus\\
    E.~Sabbelaan 53\\
    8500 Kortrijk\\
    Belgium}
\email{\href{mailto:sam.tertooy@kuleuven.be}{sam.tertooy@kuleuven.be}}
\urladdr{\url{https://stertooy.github.io}}

\subjclass[2020]{Primary: 20E45; Secondary: 20F19, 55M20}

\keywords{Reidemeister number, Reidemeister zeta function, Nielsen number, Nielsen zeta function, virtually polycyclic group}

\begin{document}
	
	\begin{abstract}
		We prove a formula for Reidemeister numbers of virtually polycyclic groups that generalises known formulas for finite groups and torsion-free virtually polycyclic groups. Based on this formula, we introduce a new invariant which we call the tamed Reidemeister number, and prove that its associated zeta functions are rational on virtually polycyclic groups. We also show that this implies rationality of (ordinary) Reidemeister zeta functions, as well as rationality of Nielsen zeta functions on compact infra-solvmanifolds.
	\end{abstract}
	
	\maketitle

	\section{Introduction}
	
	For a group \(G\) and an endomorphism \(\varphi\), two elements \(g_1,g_2\) are \(\varphi\)-twisted conjugate if
	\[ g_1 = h g_2 \varphi(h)^{-1}\]
	for some \(h \in G\). The equivalence classes are called \(\varphi\)-twisted conjugacy classes and the class of \(g \in G\) is denoted by \([g]_{\varphi}\). The number of \(\varphi\)-twisted conjugacy classes is called the \emph{Reidemeister number} \(R(\varphi)\), and is either a positive integer or infinity. We write \(\R[\varphi]\) for the set of \(\varphi\)-twisted conjugacy classes. If \(R(\varphi^k)\) is finite for all \(k \in \NN\), then \(\varphi\) is called \emph{tame}. For a tame endomorphism, we can define the \emph{Reidemeister zeta function} \(R_\varphi(z)\) as
	\[ R_\varphi(z) \coloneq \exp \sum_{k=1}^\infty\frac{R(\varphi^k)}{k} z^k.\]
	The interest in Reidemeister zeta functions comes from topological fixed-point theory. If \(f\) is a self-map of a closed manifold, then its Nielsen number \(N(f)\) carries certain information on the least number of fixed points in the homotopy class of \(f\). Unfortunately, this number is notoriously hard to compute. If \(M\) is an infra-solvmanifold and \(f_*\) is the endomorphism induced on the fundamental group of \(M\), then \(N(f)\) equals the Reidemeister number \(R(f_*)\) if the latter is finite. The Reidemeister number is, in general, much easier to compute. Nielsen numbers also have a corresponding zeta function, defined as
	\[ N_f(z) \coloneq \exp \sum_{k=1}^\infty\frac{N(f^k)}{k} z^k.\]
	Unlike the Reidemeister zeta function, this definition does not require the induced endomorphism \(f_*\) to be tame, though when it is, both zeta functions coincide.
	
	Dekimpe and Dugardein proved rationality for Nielsen zeta functions of self-maps on infra-nilmanifolds \cite{dd15-a}.  Fel'shtyn and Lee extended this to infra-solvmanifolds of type (R) \cite{fl15-a}. Dekimpe, Van den Bussche and the author proved rationality for Reidemeister zeta functions of certain crystallographic groups in \cite{dtv18-a}, and more recently, Dekimpe and Van den Bussche also proved rationality of Nielsen zeta functions for compact solvmanifolds of dimension at most \(5\), for \NR-solvmanifolds, and for solvmanifolds with fundamental group \(\ZZ^n \rtimes \ZZ\) \cite{dv24-a}. Deré showed that the Reidemeister zeta function of a tame endomorphism of a virtually polycyclic group always equals that of a tame endomorphism of a finitely generated, virtually nilpotent group \cite{dere25-a}. It has been conjectured that both Reidemeister zeta functions on virtually polycyclic groups and Nielsen zeta functions on compact infra-solvmanifolds are rational.
	
	In this paper, we first prove a formula for Reidemeister numbers on (not necessarily torsion-free) virtually polycyclic groups. By replacing Reidemeister numbers in this formula with determinant products on \NR-subgroups, we construct a new invariant we call the tamed Reidemeister number \(\TR(\varphi)\). We show that this new invariant is well-defined, that it equals the Reidemeister number \(R(\varphi)\) when the latter is finite, and that it equals the Nielsen number \(N(f)\) when \(f\) is a self-map on a compact infra-solvmanifold inducing \(\varphi\). We also define a new zeta function corresponding to this invariant,
	\[ \TR_\varphi(z) \coloneq \exp \sum_{k=1}^\infty\frac{\TR(\varphi^k)}{k} z^k,\]
	and show that this function is always rational. As a consequence, we prove the aforementioned conjectures regarding rationality of Reidemeister zeta functions on virtually polycyclic groups and of Nielsen zeta functions on compact infra-solvmanifolds.
	
	This paper is structured as follows. In \zcref{sec:prelims} we go over the group-theoretic preliminaries. In \zcref{sec:Rformula}, we prove a formula for Reidemeister numbers on virtually polycyclic groups, which serves as the inspiration for defining the tamed Reidemeister number in \zcref{sec:TamedRNr}. In \zcref{sec:detaverages} we study the determinant averages needed to prove rationality of tamed Reidemeister zeta functions. This rationality is then proven in \zcref{sec:rationalityZeta}, after which we briefly discuss applications to Reidemeister and Nielsen zeta functions. We conclude with some directions for future work in \zcref{sec:futurework}.
	
	\section{Preliminaries}
	\label{sec:prelims}
	We start by fixing some terminology and conventions. Regarding the natural numbers, we adopt the convention \(\NN \coloneq \{1, 2, \ldots\}\) and \(\NN_0 \coloneq \NN \cup \{0\}\). For a group \(G\) and element \(g\), \(\iota_g\) denotes the inner automorphism \(G \to G \colon x \mapsto gxg^{-1}\). If \(H\) is a subgroup of \(G\), then its isolator is the subset
	\[ \sqrt[G]{H} \coloneq \set{ g \in G }{ \exists k \in \NN \colon g^k \in H}.\]
	We use \(\gamma_i(G)\) to denote the terms of the lower central series, i.e.\@ \(\gamma_1(G) \coloneq G\) and \(\gamma_{i+1}(G) \coloneq [G,\gamma_i(G)] \). If \(G\) is finitely generated, torsion-free and nilpotent, its  \emph{adapted lower central series} is given by \(G_i \coloneq \sqrt[G]{\gamma_i(G)}\). This is a central series with fully invariant terms and whose factors are free abelian.
	
	Empty products and determinants of \(0\)-dimensional matrices both equal \(1\). Finally, following \cite[Not.~8.3]{dv23-a}, we define the next two maps:
	\[
	\abs{\,.\,}_\infty \colon \ZZ \to \NN \cup \{\infty\} \colon x \mapsto \begin{cases}
		\abs{x} & \text{ if } x \neq 0,\\
		\infty &\text{ if } x = 0,
	\end{cases}
	\]
	and
	\[
	\abs{\,.\,}_0 \colon \ZZ \cup \{\infty\} \to \NN_0 \colon x \mapsto \begin{cases}
		\abs{x} & \text{ if } x \neq \infty,\\
		0 &\text{ if } x = \infty.
	\end{cases}
	\]
	
	\subsection{\texorpdfstring{\NR}{NR}-groups and canonical filtrations}
	
	\begin{definition}
		A polycyclic group \(G\) is called \emph{strongly torsion-free} if it fits in a short exact sequence
		\begin{equation}
			\label{eqn:exactseqstfSgrp}
			\begin{tikzcd}
				1 \ar[r]& N \ar[r] & G \ar[r] & \ZZ^n \ar[r] & 1
			\end{tikzcd}
		\end{equation}
		where \(N\) is a finitely generated, torsion-free, nilpotent group.
	\end{definition}
	
	These groups are also called \emph{strongly torsion-free S-groups} in e.g.\@ \cite{km95-a,dv23-a}. The reason behind this is most likely historical: Hirsch \cite{hirs38-a} used the name S-groups for what are now called polycyclic groups. However, we will need a stronger property than strong torsion-freeness: the \NR-property from \cite[Def.~4.1]{dv23-a}.
	
	\begin{definition}
		Let \(G\) be a strongly torsion-free polycyclic group, let \(N \coloneq \sqrt[G]{[G,G]}\) and let \(N = N_1 \geq \cdots \geq N_c \geq N_{c+1} = 1\) be a central series of \(N\) such that each term \(N_i\) is normal in \(G\) and each factor \(N_i/N_{i+1}\) is free abelian. This induces well-defined actions
		\[ \rho_i \colon G \to \Aut(N_i / N_{i+1}) \colon g \mapsto (xN_{i+1} \mapsto \iota_g(x)N_{i+1}).\]
		If for every \(g \in G\) and every \(i\), the automorphism \(\rho_i(g)\) has no non-trivial root of unity as an eigenvalue, then we say \(G\) satisfies the \emph{\NR-property}. We will call a strongly torsion-free polycyclic group \(G\) with the \NR-property an \NR-group.
	\end{definition}
	
	As shown in \cite[Lem.~4.2]{dv23-a}, the \NR-property is well-defined since it is independent of the chosen central series \(N_i\). 
	
	Any finitely generated, torsion-free, nilpotent group is an \NR-group. Indeed, if \(G = G_1 \geq \cdots \geq G_c \geq G_{c+1} = 1\) is its adapted lower central series, then \(N = G_2\) and we can choose \(N_i \coloneq G_{i+1}\). Since the entire sequence \(G \geq N \geq N_2 \geq \cdots \geq N_{c-1} \geq N_c = 1\) is a central series, each \(\rho_i(g)\) will just be the identity map, and hence \(G\) has the \NR-property.
	
	\begin{definition}
		Let \(G\) be an \NR-group, let \(N \coloneq \sqrt[G]{[G,G]}\) and let \(N = N_1 \geq \cdots \geq N_c \geq N_{c+1} = 1\) be the adapted lower central series of \(N\). We define
		\[ \filt{0}{G} \coloneq G, \qquad \filt{i}{G} \coloneq N_i \quad (1 \leq i \leq c+1) ,\]
		and call this the \emph{canonical filtration} of \(G\). The successive quotients
		\[ \fact{i}{G} \coloneq \filt{i}{G} / \filt{i+1}{G}  \quad (0 \leq i \leq c)\]
		are called the \emph{canonical factors}.
	\end{definition}
	This filtration is particularly useful because all of its terms are fully invariant and its factors are free abelian factors. Hence an endomorphism of \(G\) induces an endomorphism on every canonical factor.
	
	\begin{lemma}
		\label{lem:admissiblesubs}
		Let \(G\) be an \NR-group and let \(H\) be a subgroup of \(G\). Then \(H\) has the \NR-property as well.
	\end{lemma}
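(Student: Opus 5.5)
The plan is to verify the two parts of the definition of an \NR-group directly for \(H\): strong torsion-freeness, and then the eigenvalue condition. For the latter we use a central series of \(\sqrt[H]{[H,H]}\) obtained by intersecting the series for \(G\). By \cite[Lem.~4.2]{dv23-a}, the \NR-property does not depend on the chosen central series, so it suffices to exhibit one admissible series for \(H\) and check the condition on it.

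\emph{Step 1: \(H\) is strongly torsion-free.} \(H\) is polycyclic, being a subgroup of a polycyclic group. Let \(N \coloneq \sqrt[G]{[G,G]}\), and let \(G = G_1 \geq \cdots\) be the adapted lower central series of \(G\).
\begin{itemize}
\item \(N\) is a normal subgroup. Indeed, \(G/[G,G]\) is abelian, and \(N/[G,G]\) is its torsion subgroup.
\item \(G/N\) is torsion-free abelian. If \(g^k \in N\), then \(g^{km} \in [G,G]\) for some \(m\), so \(g \in N\).
\item \(N\) is finitely generated torsion-free nilpotent. It lies in the isolator of \([G,G]\), which is contained in the nilpotent normal subgroup from \eqref{eqn:exactseqstfSgrp}, because \(\ZZ^n\) is torsion-free; it is torsion-free as a subgroup of \(G\).
\end{itemize}
Set \(K \coloneq H \cap N\). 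Then \(K\) is normal in \(H\) and is finitely generated, torsion-free and nilpotent. Moreover \(H/K\) embeds in \(G/N\), so it is finitely generated torsion-free abelian, hence \(\ZZ^m\). This gives the required short exact sequence for \(H\).

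\emph{Step 2: a suitable series.} Let \(M \coloneq \sqrt[H]{[H,H]}\). Since \([H,H] \subseteq [G,G] \subseteq N\) and \(N\) is isolated in \(G\), we get \(M \subseteq N \cap H\). Let \(N = N_1 \geq \cdots \geq N_{c+1} = 1\) be any central series of \(N\) with terms normal in \(G\) and free abelian factors. Define \(M_i \coloneq M \cap N_i\). Then:
\begin{itemize}
\item \(M_1 = M\) and \(M_{c+1} = 1\).
\item Each \(M_i\) is normal in \(H\), since \(M\) is characteristic in \(H\) and \(N_i\) is normal in \(G\).
\item The series is central in \(M\): \([M, M_i] \subseteq M \cap [N, N_i] \subseteq M \cap N_{i+1} = M_{i+1}\).
\item The map \(M_i/M_{i+1} \to N_i/N_{i+1}\) is injective, because \(M_i \cap N_{i+1} = M_{i+1}\). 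So each factor \(M_i/M_{i+1}\) is a subgroup of a finitely generated free abelian group, hence free abelian.
\end{itemize}

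\emph{Step 3: eigenvalues.} Fix \(h \in H\). Under the embedding of Step 2, the action \(\rho^H_i(h)\) on \(M_i/M_{i+1}\) is the restriction of \(\rho_i(h)\) on \(N_i/N_{i+1}\) to an invariant subgroup. After tensoring with \(\QQ\), this is the restriction of a linear map to an invariant subspace. Its characteristic polynomial therefore divides that of \(\rho_i(h)\), so every eigenvalue of \(\rho^H_i(h)\) is an eigenvalue of \(\rho_i(h)\). Since \(G\) has the \NR-property, \(\rho_i(h)\) has no non-trivial root of unity as an eigenvalue, and hence neither does \(\rho^H_i(h)\).

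The only point needing care is Step 2. There we must check that \(M\) sits inside \(N\), which uses the isolatedness of \(N\), and that the intersected series is still admissible in the sense of the definition. Once that is done, Step 3 is the standard invariant-subspace argument.
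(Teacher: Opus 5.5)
Your proof is correct and follows essentially the same route as the paper. You restrict the defining exact sequence to \(H\), intersect a filtration of \(G\) with a subgroup of \(H\) so that each factor embeds equivariantly in a factor for \(G\), and note that the eigenvalues of the restricted action are among those of \(\rho_i(h)\).

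The only difference is minor. You intersect with \(\sqrt[H]{[H,H]}\), so your series has exactly the form required by the definition, and you need only the independence-of-series part of \cite[Lem.~4.2]{dv23-a}. The paper instead intersects the whole canonical filtration with \(H\), handles the top factor \(H/(H\cap\filt{1}{G})\) via the trivial action on \(G/\filt{1}{G}\), and relies on the broader reading of that lemma allowing any normal series with free abelian factors.
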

	\begin{proof}
		Restricting the exact sequence \zcref[nocap,noname]{eqn:exactseqstfSgrp} to \(H\) gives a new exact sequence whose terms still satisfy the requirements of a strongly torsion-free polycyclic group. It remains to check that \(H\) satisfies the \NR-property. By \cite[Lem.~4.2]{dv23-a} we can use any series of normal subgroups of \(H\) with free abelian factors and not just the canonical filtration. After intersecting the canonical filtration of \(G\) with \(H\), i.e.\@ \(H_i \coloneq H \cap \filt{i}{G}\), we find that each resulting factor embeds in the corresponding canonical factor:
		\[
			H_i/H_{i+1} \to \fact{i}{G} \colon xH_{i+1} \mapsto x \filt{i+1}{G}.
		\]
		Consider the map
		\[ \sigma_i \colon H \to \Aut(H_i/H_{i+1}) \colon h \mapsto (xH_{i+1} \mapsto \iota_h(x)H_{i+1}).\]
		For \(i = 0\), the action on \(G/\filt{1}{G}\) is trivial. For \(i \geq 1\), if \(\sigma_i(h)\) were to have an eigenvalue that is a non-trivial root of unity, so would \(\rho_i(h)\), contradicting that \(G\) has the \NR-property. By contraposition, \(H\) has the \NR-property as well.
	\end{proof}
	
	\begin{proposition}
		\label{prop:fullinvsub}
		Let \(G\) be a virtually polycyclic group and let \(H \leq G\) have finite index. Then \(H\) contains an \NR-subgroup \(N\) of finite index that is fully invariant in \(G\).
	\end{proposition}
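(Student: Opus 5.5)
Since being fully invariant is the hard part, the plan is to first produce a finite-index \NR-subgroup of \(H\), and then shrink it to a fully invariant one using finite generation.

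\textbf{Step 1: a finite-index \NR-subgroup.} Let \(P\) be a polycyclic normal subgroup of finite index in \(G\). A classical result (Mal'cev, Auslander) says \(P\) has a finite-index subgroup that is strongly torsion-free; replace it by its normal core, which keeps finite index. For the \NR-property, use the canonical-style filtration with free abelian factors \(N_i/N_{i+1}\). Each action \(\rho_i\) takes values in \(\GL(n_i,\ZZ)\), and its image is polycyclic, so virtually triangularisable over \(\CC\) by Mal'cev. Take a finite-index subgroup \(K\) of the strongly torsion-free group acting with every \(\rho_i(K)\) triangularisable. Then pass to the subgroup generated by suitable powers, or to the kernel of the reduction \(\GL(n_i,\ZZ)\to\GL(n_i,\ZZ/3\ZZ)\), which is torsion-free.

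It remains to rule out a non-trivial root of unity as an eigenvalue. For a triangular group, eigenvalues are given by characters \(K\to\CC^*\). If \(\rho_i(g)\) has eigenvalue \(\zeta\) with \(\zeta^m=1\), \(\zeta\neq1\), then \(\rho_i(g)^m\) is unipotent on that eigenspace. In the congruence kernel such behaviour forces \(\zeta=1\), since a root-of-unity eigenvalue of an element congruent to the identity mod \(3\) must be \(1\). I would cite or adapt this via \cite{dv23-a}. Intersecting with \(H\) then gives an \NR-group of finite index in \(H\), using \zcref{lem:admissiblesubs}.

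\textbf{Step 2: make it fully invariant.} I expect this to be the main obstacle, since endomorphisms need not be injective or surjective, so the normal core does not suffice. Write \(M\) for the finite-index \NR-subgroup from Step 1, of index \(m\) in \(G\). Because \(G\) is finitely generated, it has only finitely many subgroups of index at most \(m\). Define \(N\) as the intersection of all subgroups of index dividing \(m!\), or better, \(N=\langle g^{e}\mid g\in G\rangle\) intersected with such subgroups.

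Full invariance then follows. If \(\varphi\) is an endomorphism, the preimage \(\varphi^{-1}(L)\) of a subgroup \(L\) of index \(\le m\) has index \(\le m\). Hence \(\varphi(N)\subseteq\bigcap \{L : [G:L]\le m\}=N\), because \(N\) lies in every \(\varphi^{-1}(L)\). So \(N\coloneq\bigcap\{L\le G : [G:L]\le m\}\) is a finite intersection, has finite index, is fully invariant, and is contained in \(M\subseteq H\). By \zcref{lem:admissiblesubs} it is an \NR-group, which proves the statement.
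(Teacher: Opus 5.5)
Your proposal is correct and follows the paper's proof. The paper also (i) takes a finite-index \NR-subgroup, simply citing \cite[Sec.~4.2]{dv23-a} where you sketch a congruence-subgroup argument; (ii) intersects all subgroups of index at most \(m = \ind{G}{M}\) to get a fully invariant finite-index \(N\), citing \cite[Lem.~2.2]{hw03-a} where you give the preimage argument; and (iii) concludes with \zcref{lem:admissiblesubs}. In your Step~1 the triangularisation detour is unnecessary, and the fact that actually carries the argument needs justification: a matrix in \(\GL_n(\ZZ)\) congruent to \(I\) modulo \(3\) has no non-trivial root of unity as an eigenvalue, which is stronger than torsion-freeness of the congruence kernel and holds because \((\zeta-1)/3\) is never an algebraic integer for a root of unity \(\zeta \neq 1\).
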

	\begin{proof}
		The arguments of \cite[Sec.~4.2]{dv23-a} generalise to any virtually polycyclic group \(G\), hence we can always find some finite index \NR-subgroup \(K\) of \(G\). Now set
		\[ n \coloneq \ind{G}{H \cap K}, \qquad N \coloneq \bigcap_{\substack{L \leq G \\ \ind{G}{L} \leq n}} L. \]
		Then \(N\) is fully invariant and has finite index in \(G\) by \cite[Lem.~2.2]{hw03-a}, and is contained in both \(H\) and \(K\). From \zcref{lem:admissiblesubs} it follows that \(N\) has the \NR-property. 
	\end{proof}
	
	\subsection{The determinant invariant \texorpdfstring{\(\tau\)}{τ}}
	
	We introduce a new invariant \(\tau\) on \NR-groups, which serves as the group-theoretic counterpart of the (topological) Nielsen number and the ``always-finite'' counterpart of the Reidemeister number.
	
	\begin{definition}
		\label{def:tau}
		Let \(G\) be an \NR-group and let \(\varphi \in \End(G)\). Let \(\fact{0}{G}\), \ldots, \(\fact{c}{G}\) be its canonical factors and let \(\varphi_i \in \ZZ^{n_i \times n_i}\) be the induced endomorphism on \(\fact{i}{G} \cong \ZZ^{n_i}\). We set
		\[
		\tau(\varphi) \coloneq \prod_{i=0}^c \abs{\det(I_{n_i}-\varphi_i)}.
		\]
	\end{definition}
	Note that while the matrix representation of the \(\varphi_i\) depends on the chosen basis of \(\ZZ^{n_i}\), the determinant \(\det(I_{n_i}-\varphi_i)\) does not. 
	
	\begin{proposition}
		\label{prop:NRtauequalsRwhenfinite}
		Let \(G\) be an \NR-group and let \(\varphi \in \End(G)\). Then \(R(\varphi) = \abs{\tau(\varphi)}_\infty\) and, conversely, \(\tau(\varphi) = \abs{R(\varphi)}_0\).
		
		Moreover, if \(M\) is a compact solvmanifold with fundamental group \(G\), and \(f \colon M \to M\) is a self-map that induces \(\varphi\) on \(G\), then \( N(f) = \tau(\varphi)\).
	\end{proposition}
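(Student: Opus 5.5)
The plan is to reduce both claims to known results of Dekimpe and Van den Bussche \cite{dv23-a}, using the canonical filtration as the series of fully invariant subgroups. The first step is a product formula for Reidemeister numbers along that filtration. Since every \(\filt{i}{G}\) is fully invariant, \(\varphi\) induces endomorphisms \(\varphi_i\) on each free abelian factor \(\fact{i}{G}\cong\ZZ^{n_i}\). For a free abelian group the classical computation gives
\[ R(\varphi_i)=\card{\operatorname{coker}(I_{n_i}-\varphi_i)}=\abs{\det(I_{n_i}-\varphi_i)}_\infty. \]
The key input is the \NR-property, which in \cite{dv23-a} is precisely the hypothesis guaranteeing the addition formula
\[ R(\varphi)=\prod_{i=0}^c R(\varphi_i) \]
whenever the right-hand side is finite, and \(R(\varphi)=\infty\) when some factor is infinite. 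Concretely, I would invoke their result (the one stated in terms of \(\abs{\,.\,}_\infty\) in \cite[Sec.~8]{dv23-a}) that for an \NR-group
\[ R(\varphi)=\abs{\prod_{i}\det(I-\varphi_i)}_\infty. \]
A product of integers vanishes iff some factor vanishes, and \(\abs{\,.\,}_\infty\) is multiplicative on nonzero integers. Hence the right-hand side equals \(\abs{\tau(\varphi)}_\infty\).

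The converse identity \(\tau(\varphi)=\abs{R(\varphi)}_0\) then follows formally. If \(\tau(\varphi)\neq 0\), then \(R(\varphi)=\tau(\varphi)\) is finite and \(\abs{R(\varphi)}_0=\tau(\varphi)\). If \(\tau(\varphi)=0\), then \(R(\varphi)=\infty\) and \(\abs{\infty}_0=0\). Note that \(\tau(\varphi)\geq0\), so no sign issue arises.

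For the Nielsen statement, let \(M\) be a compact solvmanifold with \(\pi_1(M)=G\). I would use the averaging/product formula for Nielsen numbers on solvmanifolds (Fel'shtyn--Lee, or its refinement in \cite{dv23-a}, via the fibration of \(M\) over a torus with nilmanifold fibres). This expresses \(N(f)\) as a product of \(\abs{\det(I-F_i)}\) over the linearisations on the factors of a fully invariant series with free abelian quotients. Identifying these linearisations with the \(\varphi_i\) on the canonical factors gives \(N(f)=\tau(\varphi)\).

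The main obstacle is this identification. Different admissible series yield the same product, and here the \NR-property is essential: it rules out eigenvalue-\(1\) phenomena caused by roots of unity in the \(G\)-action on \(N_i/N_{i+1}\), which would otherwise make the fibrewise count differ from the product of determinants. I expect to cite \cite[Lem.~4.2]{dv23-a}-type independence to switch between series. Alternatively, the general fact \(N(f)=R(\varphi)\) when finite covers the case \(\tau\neq0\), and the case \(\tau=0\) reduces to showing \(N(f)=0\) whenever some factor has eigenvalue \(1\). That last step is exactly where \cite{dv23-a}'s \NR-analysis is needed.
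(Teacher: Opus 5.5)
Your proposal matches the paper's proof. The Reidemeister identity comes from the product computation in the proof of \cite[Lem.~8.2]{dv23-a}, with \(\tau(\varphi)=\abs{R(\varphi)}_0\) following formally as you note, and the Nielsen identity comes from a product formula for \NR-solvmanifolds. One correction on that formula's source: the paper uses Keppelmann and McCord's product formula \cite[Thm.~3.1]{km95-a} (see also \cite[Thm.~4.4]{dv23-a}), whereas Fel'shtyn--Lee \cite{fl15-a} treats only infra-solvmanifolds of type (R) and does not cover a general compact solvmanifold.
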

	\begin{proof}
		The equality for \(N(f)\) follows directly from Keppelmann and McCord's product formula \cite[Thm.~3.1]{km95-a} (see also \cite[Thm.~4.4]{dv23-a}), and the equality for \(R(\varphi)\) follows from the calculations in the proof of \cite[Lem.~8.2]{dv23-a}.
	\end{proof}
	
	\begin{proposition}
		\label{cor:nilpproductformula}
		Let \(G\) be a finitely generated, torsion-free, nilpotent group, and let \(\varphi \in \End(G)\). Let \(G = G_1 \geq \cdots \geq G_c \geq G_{c+1} = 1\) be a central series of \(G\) whose terms are fully invariant in \(G\) and whose factors are free abelian. Denote by \(\varphi_i \in \ZZ^{n_i \times n_i}\) the endomorphisms induced by \(\varphi\) on the factors \(\bar{G}_i \coloneq G_i / G_{i+1} \cong \ZZ^{n_i}\). Then
		\[
		\tau(\varphi) = \prod_{i=1}^c \abs{\det(I_{n_i} - \varphi_i)}.
		\]
	\end{proposition}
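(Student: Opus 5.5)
Since \(G\) is finitely generated, torsion-free and nilpotent, it is an \NR-group and \(\tau(\varphi)\) is defined in terms of its canonical filtration. The aim is to show that the product \(\prod_i \abs{\det(I_{n_i}-\varphi_i)}\) does not depend on which fully invariant central series with free abelian factors is used. The tool is \zcref{prop:NRtauequalsRwhenfinite}, which says \(\tau(\varphi) = \abs{R(\varphi)}_0\). So it suffices to prove
\[
R(\varphi) = \Bigl|\prod_{i=1}^c \det(I_{n_i}-\varphi_i)\Bigr|_\infty
\]
for the given series. Applying \(\abs{\,.\,}_0\) to both sides then gives the claim, because \(\abs{\abs{x}_\infty}_0 = \abs{x}\) for every \(x \in \ZZ\).

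\textbf{Step 1: Reidemeister numbers along a central extension.} For each \(i\) we have a central extension \(1 \to \bar G_i \to G/G_{i+1} \to G/G_i \to 1\), and both subgroups are fully invariant. The standard fibration argument for Reidemeister numbers applies here. Since the kernel \(\bar G_i\) is central, the action of \(G/G_i\) on the twisted classes of the kernel is by translations. This gives
\[
R(\varphi_{\le i+1}) = R(\varphi_{\le i})\cdot R(\varphi_i)
\]
whenever the right-hand side is finite, where \(\varphi_{\le i}\) denotes the induced map on \(G/G_i\). Moreover, if either factor is infinite then \(R(\varphi_{\le i+1})=\infty\).

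\textbf{Step 2: the abelian factors.} On \(\ZZ^{n_i}\) we have \(R(\varphi_i) = \card{\operatorname{coker}(I-\varphi_i)} = \abs{\det(I-\varphi_i)}_\infty\). Inducting over \(i\) with Step 1 then yields the displayed formula. The infinite case comes out right because the product of determinants is zero exactly when some factor is zero.

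\textbf{Main obstacle.} The delicate part is Step 1, and in particular two of its claims. The first is the exact multiplicativity: we need every fibre over a class of \(G/G_i\) to have exactly \(R(\varphi_i)\) elements. The second is that an infinite Reidemeister number on the kernel forces \(R(\varphi)=\infty\) even though the quotient count is finite. For the first claim, I would follow the argument in \cite[Lem.~8.2]{dv23-a}, which was already cited for the canonical filtration; it uses only centrality of the factors and never uses the specific series. In detail: fix \(gG_{i+1}\) and use centrality of \(\bar G_i\). Then \(\bar G_i\)-translates of \(gG_{i+1}\) are twisted conjugate exactly when they differ by an element of \(\im(I-\varphi_i)+\{\text{contributions of } \Fix(\varphi_{\le i})\}\). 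The key fact is that the fixed-point subgroup of \(\varphi_{\le i}\) is trivial whenever \(R(\varphi_{\le i})<\infty\) for torsion-free nilpotent quotients, by induction via \(\det(I-\varphi_j)\neq 0\). With a trivial fixed-point subgroup the extra contributions vanish and each fibre has exactly \(\card{\operatorname{coker}(I-\varphi_i)}\) elements.

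For the second claim, if \(\det(I-\varphi_i)=0\), then the projection of twisted classes to the quotient is surjective with fibres that are orbits of a finitely generated group acting on \(\operatorname{coker}(I-\varphi_i)\), which is infinite. Hence \(R(\varphi)=\infty\). Alternatively, one can shortcut the whole argument by comparing both series with a common refinement via Schreier-type refinement of fully invariant series. However, the Reidemeister route above seems cleanest.
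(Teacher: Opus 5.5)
Your reduction is the same as the paper's: both use \(\tau(\varphi)=\abs{R(\varphi)}_0\) from \zcref{prop:NRtauequalsRwhenfinite}. The paper, however, quotes \cite[Thm.~2.6]{roma11-a}, which gives \(R(\varphi)=\prod_{i=1}^c\abs{\det(I_{n_i}-\varphi_i)}_\infty\) for \emph{any} central series with fully invariant terms and free abelian factors. So the paper needs no induction along the series. If you reprove that formula yourself, your finite case is fine, with one correction: the group acting on the fibre over \([\bar g]\) is \(\Fix(\iota_{\bar g}\varphi_{\le i})\), not \(\Fix(\varphi_{\le i})\). This is harmless, because \(\iota_{\bar g}\) acts trivially on the central factors, so \(\iota_{\bar g}\varphi_{\le i}\) induces the same \(\varphi_j\).

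Your argument for the second claim, however, does not follow. A finitely generated group acting on an infinite set can have a single orbit. Here the action on \(\ZZ^{n_i}/\im(I_{n_i}-\varphi_i)\) is by translations through \(h\mapsto g^{-1}hg\varphi(h)^{-1}\), and the image of that map can have finite index. For instance, take the Heisenberg group \(H\) with \([x,y]=z\) central, \(\varphi=\id\), and the series \(H\geq Z(H)\geq 1\). The cokernel on \(Z(H)\cong\ZZ\) is \(\ZZ\), yet the fibre over \(\bar x\) is a single class, since \(y^m x y^{-m}=xz^{-m}\).

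What you need is a case distinction:
\begin{itemize}
\item If \(R(\varphi_{\le i})=\infty\), the surjection \(\R[\varphi_{\le i+1}]\to\R[\varphi_{\le i}]\) finishes the argument.
\item If \(R(\varphi_{\le i})<\infty\), then every \(\Fix(\iota_{\bar g}\varphi_{\le i})\) is trivial. You can get this from \zcref{prop:tfRfiniteFixfinite} together with \zcref{lem:invariances}, or from the induction hypothesis giving \(\det(I-\varphi_j)\neq 0\) for all \(j<i\). Then every fibre is the whole cokernel, which is infinite.
\end{itemize}
For this to work, run a single induction that proves the full formula, infinite case included, for \(G/G_i\) before passing to \(G/G_{i+1}\). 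Your ``key fact'' needs the infinite case at the earlier levels.
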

	\begin{proof}
		By \cite[Thm.~2.6]{roma11-a} (see also \cite[Sec.~1]{send25-a}), we have that
		\[ R(\varphi) = \prod_{i=1}^c \abs{\det(I_{n_i} - \varphi_i)}_\infty. \]
		Applying \(\abs{\,.\,}_0\) and invoking \zcref{prop:NRtauequalsRwhenfinite} gives the desired formula.
	\end{proof}
	
	\begin{lemma}
		\label{lem:invariances}
		Let \(G,H\) be groups, let \(\varphi \in \End(G)\) and let \(\psi \in \End(H)\). Then
		\begin{enumerate}[label=(\roman*)]
			\item For every \(g \in G\), we have that
			\( R(\iota_g\varphi) = R(\varphi) = R(\varphi\iota_g)\).
			\item If \(\lambda\colon G \to H\) is an isomorphism and \(\psi \lambda = \lambda \varphi\), then
			\( R(\varphi) = R(\psi)\).
		\end{enumerate}
		If \(G\) has the \NR-property, then the same statements hold with \(R\) replaced by \(\tau\).
	\end{lemma}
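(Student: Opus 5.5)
For the statements about \(R\), I will argue directly with twisted conjugacy classes. For the statements about \(\tau\), I will pass through the canonical filtration. In the \NR-case the \(R\)-statements do most of the work, with one extra step for the case \(R=\infty\).

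For (i), the plan is to exhibit explicit bijections of \(\R\)-sets. Right multiplication \(x \mapsto xg\) sends \(\varphi\)-twisted classes to \(\iota_g\varphi\)-twisted classes. Indeed, if \(x_1 = h x_2 \varphi(h)^{-1}\), then
\[ x_1 g = h x_2 g\, g^{-1}\varphi(h)^{-1} g = h (x_2 g) (\iota_{g^{-1}}\varphi(h))^{-1}, \]
so the twisting becomes \(\iota_{g^{-1}}\varphi\). Hence \(x\mapsto xg^{-1}\) gives a bijection \(\R[\varphi] \to \R[\iota_g\varphi]\), and its inverse is \(x \mapsto xg\), which is well-defined by the same computation. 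For \(\varphi\iota_g = \iota_{\varphi(g)}\varphi\), the claim reduces to the previous case. For (ii), the map \(x \mapsto \lambda(x)\) sends \(x_1 = hx_2\varphi(h)^{-1}\) to \(\lambda(x_1) = \lambda(h)\lambda(x_2)\psi(\lambda(h))^{-1}\). Since \(\lambda\) is bijective and \(\lambda^{-1}\) intertwines in the same way, this gives a bijection \(\R[\varphi]\to\R[\psi]\).

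For \(\tau\) on an \NR-group, the cleanest route is to use the canonical factors directly. The canonical filtration is fully invariant, so it is preserved by automorphisms. An isomorphism \(\lambda\colon G\to H\) maps \(\sqrt[G]{[G,G]}\) onto \(\sqrt[H]{[H,H]}\), and it maps isolators of lower central series terms onto the corresponding ones. So \(\lambda\) maps \(\filt{i}{G}\) onto \(\filt{i}{H}\) and induces isomorphisms \(\lambda_i\colon\fact{i}{G}\to\fact{i}{H}\) with \(\psi_i\lambda_i=\lambda_i\varphi_i\). In matrix terms \(\psi_i = \lambda_i\varphi_i\lambda_i^{-1}\), so \(\det(I-\psi_i)=\det(I-\varphi_i)\), and taking the product gives \(\tau(\psi)=\tau(\varphi)\). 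Here \(H\) is an \NR-group because it is isomorphic to one.

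For (i) with \(\tau\), the induced map of \(\iota_g\varphi\) on \(\fact{i}{G}\) is \(\rho_i(g)\varphi_i\), where for \(i=0\) we have \(\rho_0(g)=\id\) because \(G/\filt{1}{G}\) is abelian. So \(\tau(\iota_g\varphi)=\prod_i\abs{\det(I-\rho_i(g)\varphi_i)}\), and this need not equal \(\tau(\varphi)\) factor by factor. Here I will use \zcref{prop:NRtauequalsRwhenfinite} instead. Since \(R(\iota_g\varphi)=R(\varphi)\) by part (i), we get
\[ \tau(\iota_g\varphi)=\abs{R(\iota_g\varphi)}_0=\abs{R(\varphi)}_0=\tau(\varphi), \]
and similarly for \(\varphi\iota_g\). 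The same shortcut also handles (ii): \(\tau(\psi)=\abs{R(\psi)}_0=\abs{R(\varphi)}_0=\tau(\varphi)\), using that \(H\) is \NR. This makes the filtration argument optional.

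The main obstacle is conceptual rather than technical. For inner twists, \(\tau\) is not obviously invariant factor by factor, so one should not try a direct determinant argument. What makes the shortcut work is the converse identity \(\tau=\abs{R}_0\) in \zcref{prop:NRtauequalsRwhenfinite}. It determines \(\tau\) completely from \(R\), including the case \(R=\infty\), where \(\tau=0\). I expect the only care needed is in checking that the hypotheses of that proposition hold for \(H\) in (ii), which they do because the \NR-property transfers along isomorphisms.
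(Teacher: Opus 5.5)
Your proof is correct and is essentially the paper's. It uses the same bijections \([x]_\varphi\mapsto[xg^{-1}]_{\iota_g\varphi}\) and \([x]_\varphi\mapsto[\lambda(x)]_\psi\), the same reduction of \(\varphi\iota_g\) via \(\varphi\iota_g=\iota_{\varphi(g)}\varphi\), and the same transfer to \(\tau\) through \(\tau=\abs{R}_0\) from \zcref{prop:NRtauequalsRwhenfinite}; your filtration argument for (ii) is a valid but unnecessary extra. One small slip: your opening sentence says \(x\mapsto xg\) lands in \(\iota_g\varphi\)-classes, but your own computation correctly gives \(\iota_{g^{-1}}\varphi\), which is what you then use.
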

	\begin{proof}
		For (i), note that the map
		\[ \R[\varphi] \to \R[\iota_g\varphi] \colon  [x]_\varphi \mapsto [xg^{-1}]_{\iota_g\varphi}\]
		is a bijection, and that \(\varphi\iota_g = \iota_{\varphi(g)}\varphi\). Similarly, for (ii), the map
		\[ \R[\varphi] \to \R[\psi] \colon  [x]_\varphi \mapsto [\lambda(x)]_{\psi}\]
		is a bijection. Finally, the assertions for \(\tau\) follow from \zcref{prop:NRtauequalsRwhenfinite}.
	\end{proof}
	
	Restricting an endomorphism of an \NR-group to a finite index, normal subgroup invariant under said endomorphism leaves \(\tau\) unchanged.
	\begin{proposition}
		\label{prop:nilpfiniteindex}
		Let \(G\) be an \NR-group, let \(N \normalsub G\) have finite index, and let \(\varphi \in \End(G)\) with \(\varphi(N) \leq N\). Then for every \(g \in G\), we have that
		\[ \tau(\restr{\iota_g\varphi}{N}) = \tau(\varphi).\]
	\end{proposition}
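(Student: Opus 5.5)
First, I reduce to \(\varphi\) itself. By \zcref{lem:invariances}(i), \(\tau(\iota_g\varphi) = \tau(\varphi)\) on \(G\). Also \(N\) is normal, so \(\iota_g\varphi\) maps \(N\) into \(N\), and \(\restr{\iota_g\varphi}{N} = \restr{\iota_g}{N}\circ\restr{\varphi}{N}\). By \zcref{lem:admissiblesubs}, \(N\) is an \NR-group, so \(\tau\) makes sense on it. The inner automorphism \(\restr{\iota_g}{N}\) is not inner in \(N\), so \zcref{lem:invariances} does not apply directly on \(N\). Instead I will prove the more general statement \(\tau(\restr{\psi}{N}) = \tau(\psi)\) for every \(\psi \in \End(G)\) with \(\psi(N)\leq N\), and apply it to \(\psi = \iota_g\varphi\).

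To compare the two products of determinants, I will avoid the canonical filtration of \(N\). Using the independence of the chosen series (as in \cite[Lem.~4.2]{dv23-a}, and the computation behind \zcref{prop:NRtauequalsRwhenfinite}), \(\tau\) can be computed from any \(\psi\)-invariant series of normal subgroups with free abelian factors on which the \NR-property holds. For \(N\) I take the intersections \(N_i \coloneq N \cap \filt{i}{G}\). These are invariant under \(\restr{\psi}{N}\) because \(\filt{i}{G}\) is fully invariant and \(\psi(N)\leq N\). Each \(N_i/N_{i+1}\) embeds in \(\fact{i}{G}\) as a subgroup of finite index, since \(N\) has finite index in \(G\). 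Hence \(N_i/N_{i+1}\otimes\QQ = \fact{i}{G}\otimes\QQ\), and the induced maps agree as rational linear maps. The characteristic polynomials then coincide, so \(\det(I-\psi_i) = \det(I - (\restr{\psi}{N})_i)\) factor by factor, and the products agree.

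The main obstacle is justifying that \(\tau(\restr{\psi}{N})\) may be computed with this intersected series rather than the canonical filtration of \(N\). For \(i\geq1\) this is plausible: the series refines to one with normal terms and free abelian factors in \(N\), and \cite[Lem.~4.2]{dv23-a} states independence for the \NR-property. The top factor \(N/N_1\) is free abelian but not necessarily \(N/\sqrt[N]{[N,N]}\), so this step needs care.

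If citing general independence is not enough, I would argue instead via Reidemeister numbers. For \(\tau\neq0\) I would use \zcref{prop:NRtauequalsRwhenfinite} together with the standard product formula for Reidemeister numbers along an invariant series of an \NR-group (from \cite[Lem.~8.2]{dv23-a}), where the fibres contribute \(\abs{\det}\) by the \NR-property. Equivalently, the rational eigenvalue data of the canonical factors of \(N\) coincide with those of \(G\): both are determined by the semisimplified action on the rational Malcev completion, whose graded pieces agree up to the refinement of \(\fact{0}{G}\otimes\QQ\) into \(\QQ\)-pieces. The product of \(\abs{\det(I-\cdot)}\) is multiplicative along such refinements. The case \(\tau = 0\) follows, since then some eigenvalue equals \(1\) in both.
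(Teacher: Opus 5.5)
Your reduction to the claim \(\tau(\restr{\psi}{N})=\tau(\psi)\) for every \(\psi\in\End(G)\) with \(\psi(N)\leq N\) matches the paper. Your lattice argument is also sound: \((N\cap\filt{i}{G})/(N\cap\filt{i+1}{G})\) is a finite-index invariant sublattice of \(\fact{i}{G}\), so the two determinants agree.

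The gap is the step you flag yourself. \(\tau(\restr{\psi}{N})\) is defined through the canonical filtration of \(N\), and nothing you cite shows that it can be computed from the intersected series. \cite[Lem.~4.2]{dv23-a} only says that the \NR-property does not depend on the chosen series; it says nothing about the value of the product of determinants. Your fallbacks do not close this either. A Reidemeister product formula along an arbitrary invariant series of \(N\) is exactly what would have to be proved. The Malcev-completion remark is only a heuristic, and \(N\) is polycyclic, not nilpotent.

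The missing idea is the one the paper uses. Since \(G\) is an \NR-group, \([N,N]\) has finite index in \([G,G]\) by \cite[Lem.~5.5]{dv23-a}. Write \(H\) for \(\sqrt[G]{[G,G]}\). Then the top term \(N\cap\filt{1}{G}=N\cap H\) is exactly \(M=\sqrt[N]{[N,N]}\). So your worry about the top factor is unfounded, and this is precisely where the \NR-hypothesis enters.

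Next, \(M\) has finite index in the finitely generated nilpotent group \(H\). Hence \(\gamma_i(M)\) has finite index in \(\gamma_i(H)\), and \(M\cap\sqrt[H]{\gamma_i(H)}=\sqrt[M]{\gamma_i(M)}\). Thus \(N\cap\filt{i}{G}=\filt{i}{N}\) for every \(i\). Your intersected series \emph{is} the canonical filtration of \(N\), and your factor-by-factor comparison then completes the proof.

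Completed this way, your argument is more direct than the paper's. The paper uses \(M=N\cap H\) only to split into the pairs \(M\leq H\) and \(N/M\leq G/H\). It then invokes \cite[Thm.~3.19]{send25-a} to compare Reidemeister numbers there, and converts back to \(\tau\) via \zcref{prop:NRtauequalsRwhenfinite}.

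Alternatively, your independence claim can be proved outright. Refine two \(\psi\)-invariant normal series via Schreier's theorem; the Zassenhaus isomorphisms are natural, hence \(\psi\)-equivariant. After tensoring the factors with \(\QQ\), this shows that the signed product \(\prod_i\det(I-\psi_i)\) does not depend on the series.
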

	\begin{proof}
		To ease notation, we set \(H \coloneq \sqrt[G]{[G,G]}\) and \(M \coloneq \sqrt[N]{[N,N]}\).
		By \zcref{lem:admissiblesubs}, \(N\) has the \NR-property, and then \([N,N]\) has finite index in \([G,G]\) by \cite[Lem.~5.5]{dv23-a}. Consequently, \( M = N \cap H\), and it follows that \(\ind{H}{M} \leq \ind{ G }{ N } < \infty\). Since 
		\[ N/M \cong NH/H \leq G/H,\]
		we can identify \(N/M\) with a subgroup of index \(\ind{G}{NH} < \infty\) in \(G/H\). Let \(\bar{\varphi}\) and \(\tilde{\varphi}\) denote the induced endomorphisms on \(G/H\) and \(N/M\), respectively, and note that we can see \(\tilde{\varphi}\) as the restriction of \(\bar{\varphi}\) to \(N/M\). Since \(H\) and \(G/H\) are both finitely generated, torsion-free, nilpotent groups, we can apply \cite[Thm.~3.19]{send25-a} to obtain
		\[ R(\restr{\varphi}{H}) = R(\restr{\varphi}{M}), \qquad R(\bar{\varphi}) = R(\tilde{\varphi}).\]
		By \zcref{prop:NRtauequalsRwhenfinite}, the same equalities hold with \(R\) replaced by \(\tau\), and by \zcref{def:tau} and \zcref{cor:nilpproductformula} we have that
		\[ \tau(\varphi) = \tau(\restr{\varphi}{H}) \tau(\bar{\varphi}) = \tau(\restr{\varphi}{M}) \tau(\tilde{\varphi}) = \tau(\restr{\varphi}{N}).\]
		Applying this to \(\iota_g\varphi\) and using \zcref{lem:invariances}(i), we obtain that
		\[ \tau(\varphi) = \tau(\iota_g\varphi) = \tau(\restr{\iota_g\varphi}{N}) \]
		for every \(g \in G\).
	\end{proof}
	
	The next few results will be used to prove the averaging formulas in \zcref{prop:averagingformula}, and our new formula for the Reidemeister number in \zcref{thm:Rformula}. The first result is a special case of \cite[Cor.~2.16]{send25-a}: virtually polycyclic groups are finitely generated and residually finite.
	\begin{proposition}
		\label{prop:tfRfiniteFixfinite}
		Let \(G\) be a torsion-free, virtually polycyclic group and let \(\varphi \in \End(G)\). If \(R(\varphi) < \infty\), then \(\Fix(\varphi)\) is trivial.
	\end{proposition}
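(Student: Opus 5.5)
The plan is to reduce to an \NR-subgroup, where finiteness of the Reidemeister number is governed by the determinants in \zcref{def:tau}, and to exploit that a non-trivial fixed point forces one of these determinants to vanish. Suppose \(R(\varphi) < \infty\) and, for a contradiction, that \(1 \neq g \in \Fix(\varphi)\). By \zcref{prop:fullinvsub} (applied with \(H = G\)), there is an \NR-subgroup \(N\) of finite index that is fully invariant in \(G\). In particular \(N \normalsub G\) and \(\varphi(N) \leq N\).

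\emph{Step 1: \(R(\restr{\varphi}{N}) \leq \ind{G}{N}\, R(\varphi)\).}
Fix coset representatives \(t_1, \ldots, t_m\) of \(N\) in \(G\), where \(m = \ind{G}{N}\). For \(x \in N\), every element of \([x]_\varphi\) has the form
\[ t_j n x \varphi(n)^{-1} \varphi(t_j)^{-1} \quad \text{with } n \in N. \]
Normality gives that \(y \mapsto t_j y \varphi(t_j)^{-1}\) maps \(N\) to \(N\). This map sends \(\restr{\varphi}{N}\)-classes to \(\restr{\varphi}{N}\)-classes, because
\[ t_j\, n y \varphi(n)^{-1}\, \varphi(t_j)^{-1} = (t_j n t_j^{-1})\,\bigl(t_j y \varphi(t_j)^{-1}\bigr)\,\varphi(t_j n t_j^{-1})^{-1}. \]
Hence \([x]_\varphi \cap N\) is a union of at most \(m\) twisted conjugacy classes of \(\restr{\varphi}{N}\). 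Every class of \(\restr{\varphi}{N}\) lies in some \([x]_\varphi\) with \(x \in N\), so the bound follows. In particular \(R(\restr{\varphi}{N}) < \infty\).

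\emph{Step 2: a non-trivial fixed point in \(N\).}
Since \(G\) is virtually polycyclic, some power \(g^k\) with \(k \in \NN\) lies in \(N\). Since \(G\) is torsion-free, \(g^k \neq 1\). Clearly \(\varphi(g^k) = g^k\), so \(g^k\) is a non-trivial fixed point of \(\restr{\varphi}{N}\).

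\emph{Step 3: contradiction via \(\tau\).}
By \zcref{prop:NRtauequalsRwhenfinite}, finiteness of \(R(\restr{\varphi}{N})\) gives \(\tau(\restr{\varphi}{N}) = R(\restr{\varphi}{N}) \neq 0\). Thus \(\det(I - \varphi_i) \neq 0\) on every canonical factor \(\fact{i}{N}\). The canonical filtration of \(N\) ends at \(\filt{c+1}{N} = 1\), so there is a unique \(i\) with \(g^k \in \filt{i}{N} \setminus \filt{i+1}{N}\). Its image in \(\fact{i}{N} \cong \ZZ^{n_i}\) is then a non-zero vector fixed by \(\varphi_i\). This makes \(1\) an eigenvalue of \(\varphi_i\), so \(\det(I - \varphi_i) = 0\), a contradiction.

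The only step with real content is Step 1, the passage from \(G\) to \(N\) without losing finiteness of the Reidemeister number. The twisted-conjugation bijection above handles it, so I expect no serious obstacle. Steps 2 and 3 only use that the canonical filtration is fully invariant with free abelian factors.
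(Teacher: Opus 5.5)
Your proof is correct, but it takes a different route from the paper. The paper gives no argument of its own. It observes that virtually polycyclic groups are finitely generated and residually finite, and quotes \cite[Cor.~2.16]{send25-a}, of which the proposition is a special case.

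You instead work inside the paper's own framework. You pass to a fully invariant \NR-subgroup \(N\) via \zcref{prop:fullinvsub} and transfer finiteness of the Reidemeister number to \(\restr{\varphi}{N}\). You then note that a non-trivial fixed point of \(\restr{\varphi}{N}\) gives the eigenvalue \(1\) on some canonical factor. This contradicts \(\tau(\restr{\varphi}{N}) = R(\restr{\varphi}{N}) \neq 0\) from \zcref{prop:NRtauequalsRwhenfinite}.

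The citation buys generality: it covers all finitely generated, residually finite groups, but the actual argument is outsourced. Your route only handles virtually polycyclic groups and rests on \zcref{prop:NRtauequalsRwhenfinite} (hence on \cite{dv23-a}), but it is short and transparent. It also gives slightly more than stated. Steps~1 and~3 never use torsion-freeness of \(G\), so they show \(\Fix(\varphi) \cap N = 1\). Hence \(\card{\Fix(\varphi)} \leq \ind{G}{N}\) for every virtually polycyclic \(G\) with \(R(\varphi) < \infty\), and torsion-freeness (your Step~2) is needed only to pass from finite to trivial.

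One claim in Step~1 is wrong as stated. Normality does not make \(y \mapsto t_j y \varphi(t_j)^{-1}\) map \(N\) into \(N\). It maps \(N\) onto the coset \(t_j\varphi(t_j)^{-1}N\), which equals \(N\) only when \(t_j\varphi(t_j)^{-1} \in N\). For the other \(j\), the elements \(t_j n x \varphi(n)^{-1}\varphi(t_j)^{-1}\) lie outside \(N\) altogether. So \([x]_\varphi \cap N\) is still a union of at most \(m\) classes of \(\restr{\varphi}{N}\), and your bound stands. Alternatively, the finiteness you need from Step~1 follows directly from \zcref{prop:RfiniteFiniteIndex} with \(g = 1\).
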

	
	The following was taken from the author's PhD thesis \cite[Cor.~2.5.15]{tert19-b}.
	\begin{proposition}
		\label{prop:RfiniteFiniteIndex}
		Let \(G\) be a group with finite index normal subgroup \(N\), and let \(\varphi \in \End(G)\) such that \(\varphi(N) \leq N\). Then
		\[ R(\varphi) = \infty \iff \exists gN \in G/N \text{ such that } R(\restr{\iota_g\varphi}{N}) = \infty. \]
	\end{proposition}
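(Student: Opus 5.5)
We prove both directions by relating the twisted conjugacy classes of \(\varphi\) on \(G\) to those of the restrictions \(\restr{\iota_g\varphi}{N}\), \(g \in G\), through the projection \(\pi\colon G \to G/N\). Since \(N\) is normal and \(\varphi(N) \leq N\), \(\varphi\) induces \(\bar\varphi\) on the finite group \(G/N\), and \(\pi\) gives a surjection \(\R[\varphi] \to \R[\bar\varphi]\) onto a finite set. Hence \(R(\varphi)=\infty\) if and only if some fibre of this map is infinite. The plan is to identify each fibre with a quotient of a Reidemeister set on \(N\), where the quotient comes from an action of a finite group.

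Fix \(g \in G\) and consider the fibre over \([gN]_{\bar\varphi}\). Every element of this fibre is the class of some \(ng\) with \(n \in N\), since \(hg'\varphi(h)^{-1}\) with \(g' \in gN\) can be moved into the coset \(gN\) when \(hN\) fixes \(gN\) under \(\bar\varphi\)-twisted conjugation. We then compare \([ng]_\varphi\) with classes of \(\psi \coloneq \restr{\iota_g\varphi}{N}\) via \(n \mapsto ng\). For \(m \in N\),
\[ m\,ng\,\varphi(m)^{-1} = \bigl(m\,n\,g\varphi(m)^{-1}g^{-1}\bigr)g = \bigl(m\,n\,\psi(m)^{-1}\bigr)g, \]
so \(\psi\)-twisted conjugacy on \(N\) is exactly \(\varphi\)-twisted conjugacy by elements of \(N\) inside the coset \(Ng\). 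Twisted conjugation by elements \(h\) from the stabiliser \(S_g \coloneq \set{hN}{hg\varphi(h)^{-1}N = gN}\), a finite group, then permutes these \(\psi\)-classes. This yields a surjection \(\R[\psi] \to\) (fibre over \([gN]_{\bar\varphi}\)) whose fibres are orbits of the finite group \(S_g\), each of size at most \(\card{S_g} \le \ind{G}{N}\).

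Consequently the fibre over \([gN]_{\bar\varphi}\) is infinite exactly when \(R(\restr{\iota_g\varphi}{N})=\infty\). Combined with the first paragraph this gives the equivalence. We also note that the choice of representative \(g\) of \(gN\) does not matter: replacing \(g\) by \(gn_0\) changes \(\psi\) to \(\iota_{g}\iota_{n_0}\varphi\) restricted to \(N\), which by \zcref{lem:invariances}(i) has the same Reidemeister number on \(N\), since \(\iota_{g n_0}|_N = \iota_g|_N\iota_{n_0}|_N\) and \(\iota_{n_0}\) is inner on \(N\).

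The main obstacle is the bookkeeping in the second step: checking that the \(S_g\)-action on \(\R[\psi]\) is well defined and that its orbits are precisely the fibres. Concretely, for \(h\) with \(hg\varphi(h)^{-1} = n_h g\) and \(n_h \in N\), the map \([n]_\psi \mapsto [h n g \varphi(h)^{-1} g^{-1}]_\psi = [hnh^{-1} n_h]_\psi\) must respect \(\psi\)-classes. We would verify this by direct computation, and the bound on orbit sizes then follows automatically.
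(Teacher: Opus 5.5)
Your proof is correct. The paper gives no argument of its own here, since the statement is quoted from the author's thesis. The natural comparison is therefore with the paper's orbit counting in \zcref{lem:orbitformula} and Step~2 of the proof of \zcref{thm:Rformula}, which rests on the same mechanism as yours: a finite quotient acting on \(N\)-orbits. That framework shortens your argument. Set \(h \ast x \coloneq hx\varphi(h)^{-1}\) and write \(N\backslash G\) for the set of \(\ast\)-orbits of \(N\). The \(N\)-orbits inside each coset \(Ng\) are exactly the \(\restr{\iota_g\varphi}{N}\)-classes (via \(n \mapsto ng\)), so \(\card{N\backslash G} = \sum_{gN \in G/N} R(\restr{\iota_g\varphi}{N})\). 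The identity \(h \ast (n \ast x) = (hnh^{-1}) \ast (h \ast x)\) shows that \(G/N\) acts on \(N\backslash G\) with orbit set \(\R[\varphi]\) and orbits of size at most \(\ind{G}{N}\). Hence \(R(\varphi) \le \sum_{gN} R(\restr{\iota_g\varphi}{N}) \le \ind{G}{N}\,R(\varphi)\), and the equivalence follows without passing through \(\R[\bar\varphi]\) or the stabilisers \(S_g\). Your fibrewise version gives the per-fibre estimate \(\card{F_g} \le R(\restr{\iota_g\varphi}{N}) \le \card{S_g}\,\card{F_g}\) for the fibre \(F_g\) over \([gN]_{\bar\varphi}\), at the cost of the bookkeeping you flagged.

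That bookkeeping does go through. For \(hN \in S_g\), write \(hg\varphi(h)^{-1} = n_h g\) and \(T_h(n) \coloneq hnh^{-1}n_h\), so that \(T_h(n)g = h(ng)\varphi(h)^{-1}\). For \(m,k \in N\) and \(m' \coloneq hmh^{-1}\), a direct computation gives
\[
T_h\bigl(mn\psi(m)^{-1}\bigr) = m'\,T_h(n)\,\psi(m')^{-1},
\qquad
T_{hk}(n) = T_h\bigl(kn\psi(k)^{-1}\bigr).
\]
So \(T_h\) descends to \(\R[\psi]\) and depends only on \(hN\). Moreover, if \(n_1g = h(n_2g)\varphi(h)^{-1}\) for some \(h \in G\), reducing modulo \(N\) forces \(hN \in S_g\), so the fibres of \(\R[\psi] \to F_g\) are exactly the \(S_g\)-orbits.

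There are two minor slips. First, showing that every class in \(F_g\) meets \(Ng\) needs no stabiliser condition: if \(xN = hg\varphi(h)^{-1}N\), then \(h^{-1}x\varphi(h) \in gN\). Second, in your remark on the choice of representative, the inner automorphism \(\iota_{n_0}\) sits in the middle of \(\iota_g\iota_{n_0}\varphi\), so \zcref{lem:invariances}(i) does not apply as written. Rewrite \(\iota_{gn_0} = \iota_{gn_0g^{-1}}\iota_g\) first. That remark is redundant anyway, since your fibre argument works for every representative of \(gN\).
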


	The result below lets us transfer the average of a function on one group to the average of a related function on another group.
	\begin{lemma}
		\label{lem:fingrpaverages}
		Let \(\lambda \colon G \to H\) be an epimorphism of finite groups, and let \(\alpha \colon G \to \CC\) and \(\beta \colon H \to \CC\) be functions such that \(\alpha = \beta\lambda\). Then
		\[ \frac{1}{\card{G}} \sum_{g \in G} \alpha(g) = \frac{1}{\card{H}} \sum_{h \in H} \beta(h). \]
	\end{lemma}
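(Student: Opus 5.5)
The plan is to compare both averages with a single sum over \(G\), using that the fibres of \(\lambda\) all have the same size. Since \(\lambda\) is an epimorphism of finite groups, its kernel \(K \coloneq \ker\lambda\) satisfies \(\card{K} = \card{G}/\card{H}\). Every \(h \in H\) has a non-empty preimage \(\lambda^{-1}(h)\), which is a coset \(gK\) for any chosen \(g \in \lambda^{-1}(h)\). So each fibre has exactly \(\card{K}\) elements, and \(G\) is the disjoint union of the fibres \(\lambda^{-1}(h)\) for \(h \in H\).

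Next, regroup the sum over \(G\) according to these fibres. Using \(\alpha = \beta\lambda\), the function \(\alpha\) is constant with value \(\beta(h)\) on \(\lambda^{-1}(h)\). This gives
\[ \sum_{g \in G} \alpha(g) = \sum_{h \in H} \sum_{g \in \lambda^{-1}(h)} \beta(\lambda(g)) = \sum_{h \in H} \card{K}\, \beta(h) = \frac{\card{G}}{\card{H}} \sum_{h \in H} \beta(h). \]
Dividing by \(\card{G}\) yields the claimed equality.

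There is no real obstacle here. The only point needing care is that surjectivity is used twice: to ensure every fibre is non-empty, so that it is a full coset of \(K\), and to obtain \(\card{K} = \card{G}/\card{H}\) via the first isomorphism theorem.
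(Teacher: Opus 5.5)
Your proof is correct and takes essentially the same approach as the paper. Both arguments partition \(G\) into the fibres of \(\lambda\), which are cosets of \(\ker\lambda\), and use \(\card{G} = \card{H}\,\card{\ker\lambda}\) from the First Isomorphism Theorem to rewrite the sum over \(G\) as \(\card{\ker\lambda}\) times the sum over \(H\).
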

	\begin{proof} This is a straightforward application of the First Isomorphism Theorem:
		\begin{align*}
			\frac{1}{\card{G}} \sum_{g \in G} \alpha(g)
			&= \frac{1}{\card{\lambda(G)}\card{\ker(\lambda)}} \sum_{g \in G} (\beta\lambda)(g) \\
			&= \frac{1}{\card{\lambda(G)}} \sum_{\lambda(g) \in \lambda(G)} \beta(\lambda(g)) \\
			&= \frac{1}{\card{H}} \sum_{h \in H} \beta(h). \qedhere
		\end{align*}
	\end{proof}
	
	The following averaging formulas extend the formulas for Nielsen numbers from \cite[Thm.~3.5]{kll05-a} and \cite[Cor.~4.13]{dv23-a}.
	
	\begin{proposition}
		\label{prop:averagingformula}
		Let \(G\) be a torsion-free, virtually polycyclic group and let \(\varphi \in \End(G)\). Suppose that \(N \normalsub G\) is an \NR-subgroup of finite index and that \(\varphi(N) \leq N\). Then
		\[
		R(\varphi) = \frac{1}{\ind{G}{N}} \sum_{gN \in G/N} \abs{\tau(\restr{\iota_g\varphi}{N})}_\infty.
		\]
		Moreover, if \(G\) is the fundamental group of a compact infra-solvmanifold \(M\) and \(f \colon M \to M\) is a self-map inducing \(\varphi\), then
		\[ N(f) = \frac{1}{\ind{G}{N}} \sum_{gN \in G/N} \tau(\restr{\iota_g\varphi}{N}).\]
	\end{proposition}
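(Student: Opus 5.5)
The plan is to prove the formula for \(R(\varphi)\) by fibring the twisted conjugacy classes of \(G\) over those of \(G/N\), and to obtain the formula for \(N(f)\) from the known averaging formulas. First I deal with the infinite case. If \(R(\varphi)=\infty\), then \zcref{prop:RfiniteFiniteIndex} gives some \(gN\) with \(R(\restr{\iota_g\varphi}{N})=\infty\). By \zcref{prop:NRtauequalsRwhenfinite} this term equals \(\abs{\tau(\restr{\iota_g\varphi}{N})}_\infty\), so the right-hand side is also \(\infty\). Conversely, if some term on the right is infinite, then \zcref{prop:RfiniteFiniteIndex} gives \(R(\varphi)=\infty\). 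So from now on I assume \(R(\varphi)<\infty\). Then every \(R(\restr{\iota_g\varphi}{N})\) is finite, and by \zcref{prop:NRtauequalsRwhenfinite} it equals \(\abs{\tau(\restr{\iota_g\varphi}{N})}_\infty\). It therefore suffices to prove
\[ R(\varphi) = \frac{1}{\ind{G}{N}}\sum_{gN\in G/N} R(\restr{\iota_g\varphi}{N}). \]

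Let \(\bar\varphi\) be the endomorphism induced on the finite group \(G/N\). The projection induces a surjection \(\R[\varphi]\to\R[\bar\varphi]\), and I will count the fibre over \([\bar g]_{\bar\varphi}\).

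\emph{Step 1: describe the fibre.} The classes \([gn]_\varphi\) with \(n\in N\) correspond to \(\R[\restr{\iota_g\varphi}{N}]\) via \(gn\mapsto n\), after right-translating as in the proof of \zcref{lem:invariances}(i). Two such classes are identified exactly when the twisting element \(h\) lies in a coset \(hN\) belonging to the stabiliser \(\Fix(\iota_{\bar g}\bar\varphi)\leq G/N\). So the fibre is the orbit space of an action of \(\Fix(\iota_{\bar g}\bar\varphi)\) on \(\R[\restr{\iota_g\varphi}{N}]\).

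\emph{Step 2: show the action is free.} This is the main obstacle. Suppose \(hN\) fixes the class of \(n\). Unwinding the definitions, some element of the coset \(hN\) lies in \(\Fix(\iota_{gn}\varphi)\). Now \(R(\iota_{gn}\varphi)=R(\varphi)<\infty\) by \zcref{lem:invariances}(i), and \(G\) is torsion-free, so \zcref{prop:tfRfiniteFixfinite} forces \(\Fix(\iota_{gn}\varphi)=1\). Hence \(hN=N\). I expect the delicate part to be verifying carefully that a fixed coset really yields an honest fixed point of \(\iota_{gn}\varphi\), rather than merely one modulo \(N\).

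\emph{Step 3: sum over the fibres.} Freeness gives fibre size \(R(\restr{\iota_g\varphi}{N})/\card{\Fix(\iota_{\bar g}\bar\varphi)}\). By orbit–stabiliser, the class \([\bar g]_{\bar\varphi}\) has \(\ind{G}{N}/\card{\Fix(\iota_{\bar g}\bar\varphi)}\) elements. The quantity \(R(\restr{\iota_g\varphi}{N})\) is constant on a twisted class: replacing \(g\) by \(hg\varphi(h)^{-1}\) changes \(\restr{\iota_g\varphi}{N}\) to \(\iota_h\circ\restr{\iota_g\varphi}{N}\circ\iota_h^{-1}\), to which \zcref{lem:invariances} applies. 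Therefore summing the fibre sizes over \(\R[\bar\varphi]\) equals \(\frac{1}{\ind{G}{N}}\) times the sum over all \(gN\in G/N\), which is the desired formula.

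For the Nielsen statement, I would invoke the averaging formula of \cite[Cor.~4.13]{dv23-a} (or \cite[Thm.~3.5]{kll05-a}). It expresses \(N(f)\) as \(\frac{1}{\ind{G}{N'}}\sum N(\bar f_g)\), where \(N'\) is a suitable \NR-subgroup and \(\bar f_g\) are lifts to the solvmanifold cover with fundamental group \(N'\) that induce \(\restr{\iota_g\varphi}{N'}\). By \zcref{prop:NRtauequalsRwhenfinite}, \(N(\bar f_g)=\tau(\restr{\iota_g\varphi}{N'})\). To pass from \(N'\) to the given \(N\), I would use \zcref{prop:fullinvsub} to choose a common fully invariant finite-index \NR-subgroup \(K\leq N\cap N'\). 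Then \zcref{prop:nilpfiniteindex} shows that \(\tau(\restr{\iota_g\varphi}{K})\) equals \(\tau(\restr{\iota_g\varphi}{N})\), and likewise for \(N'\), for every \(g\). Regrouping the averages over \(G/K\) into averages over \(G/N\), exactly as in \zcref{lem:fingrpaverages}, then gives the stated formula.
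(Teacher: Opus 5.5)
Your proof is correct. For the Reidemeister formula you take a more self-contained route than the paper. The paper quotes the averaging formula \cite[Prop.~2.5.16]{tert19-b} and combines it with \zcref{prop:tfRfiniteFixfinite} and \zcref{prop:NRtauequalsRwhenfinite}. You instead prove the averaging identity directly: you fibre \(\R[\varphi]\) over \(\R[\bar\varphi]\) and show that \(\Fix(\iota_{\bar g}\bar\varphi)\) acts freely on the \(N\)-orbits in \(gN\). This buys independence from the thesis and makes explicit that the only non-combinatorial input is triviality of \(\Fix(\iota_x\varphi)\) for \(x \in G\). That is the same mechanism as in Step~1 of the proof of \zcref{thm:Rformula}; the citation is simply shorter.

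Two small remarks, writing \(h \ast x \coloneq hx\varphi(h)^{-1}\) for the twisted action.
\begin{enumerate}
\item The step you flagged as delicate is immediate. The equality \(h \ast (gn) = m \ast (gn)\) with \(m \in N\) says exactly that \(m^{-1}h \in hN\) is a genuine element of \(\Fix(\iota_{gn}\varphi)\).
\item In Step~1 it is the \(N\)-orbits in \(gN\), not the \(\varphi\)-classes, that correspond to \(\R[\restr{\iota_g\varphi}{N}]\). The cleanest bijection is \(ng \mapsto n\), because \(m \ast (ng) = mn\,(\iota_g\varphi)(m)^{-1}g\). Your map \(gn \mapsto n\) yields \(\restr{\varphi\iota_g}{N}\) instead. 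That endomorphism is conjugate to \(\restr{\iota_g\varphi}{N}\) by the automorphism \(\restr{\iota_g}{N}\), so the count is unaffected.
\end{enumerate}

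For the Nielsen formula your argument is essentially the paper's. There one takes a fully invariant \NR-subgroup \(K \leq N\) from \zcref{prop:fullinvsub} and applies \cite[Thm.~4.11]{dv23-a}, which requires full invariance, directly to \(K\). One then passes to \(N\) via \zcref{prop:nilpfiniteindex} and \zcref{lem:fingrpaverages}. Your auxiliary subgroup \(N'\) is harmless but unnecessary, since you may simply take \(N' = K\).
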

	
	\begin{proof}
		The formula for Reidemeister numbers follows from \cite[Prop.~2.5.16]{tert19-b} and \zcref{prop:NRtauequalsRwhenfinite,prop:tfRfiniteFixfinite}.
		
		Using \zcref{prop:fullinvsub}, let \(K \leq N\) be a finite index, fully invariant \NR-subgroup of \(G\).
		Then \cite[Thm.~4.11]{dv23-a} gives us the desired formula, but only for \(K\) (and not for \(N\)), as it requires the normal subgroup to be fully invariant:
		\[
		N(f) = \frac{1}{\ind{G}{K}} \sum_{gK \in G/K} \tau(\restr{\iota_g\varphi}{K}).
		\]
		By \zcref{prop:nilpfiniteindex}, \(\tau(\restr{\iota_{g}\varphi}{K}) = \tau(\restr{\iota_{g}\varphi}{N})\) for every \(g \in G\). Applying \zcref{lem:fingrpaverages} to the natural epimorphism \(G/K \to G/N\) gives the desired formula.
	\end{proof}
	
	\section{A formula for Reidemeister numbers}
	\label{sec:Rformula}
	
	In this section, we express the Reidemeister number as a sum over conjugacy classes of torsion elements; the summands are reminiscent of the averaging formula from \zcref{prop:averagingformula}. We begin by showing that this sum has only finitely many terms.
	
	\begin{proposition}
		\label{prop:segalconjclasses}
		A virtually polycyclic group contains only finitely many conjugacy classes of torsion elements.
	\end{proposition}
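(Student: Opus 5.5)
Since the label names Segal, the shortest route is to cite the classical result in Segal's \emph{Polycyclic Groups} that a virtually polycyclic group has only finitely many conjugacy classes of finite subgroups. Every torsion element generates a finite cyclic subgroup, so the statement follows from that result together with a counting step. I would still sketch a self-contained argument, by induction on the Hirsch length, using \zcref{prop:fullinvsub} to get a torsion-free finite-index normal subgroup.

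\textbf{Setup.} By \zcref{prop:fullinvsub}, fix a finite-index normal \NR-subgroup \(N\) of \(G\). Since \(N\) is strongly torsion-free, it is torsion-free. Hence every torsion element \(t\) satisfies \(\langle t\rangle\cap N=1\), so \(\langle t\rangle\) embeds in the finite group \(G/N\) and \(\card{t}\) is bounded by \(\ind{G}{N}\).

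\textbf{Reduction to finite subgroups.} The conjugacy class of a torsion element is determined by the conjugacy class of the finite subgroup it generates, together with a choice of generator of that subgroup. There are at most \(\ind{G}{N}\) such generators. It therefore suffices to show that there are only finitely many conjugacy classes of finite subgroups.

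\textbf{Induction on the Hirsch length.} If the Hirsch length is \(0\), then \(G\) is finite and there is nothing to prove. Otherwise, take a nontrivial free abelian normal subgroup \(A\) of \(G\) contained in \(N\), for instance the last nontrivial term of the canonical filtration of \(N\), which is characteristic in \(N\) and hence normal in \(G\). Then \(G/A\) is virtually polycyclic of smaller Hirsch length, so by induction it has finitely many conjugacy classes of finite subgroups.

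\textbf{Lifting through \(A\).} Fix a finite subgroup \(\bar F\le G/A\) and let \(E\) be its preimage in \(G\). Since \(A\) is torsion-free, every finite subgroup \(F\) of \(E\) mapping onto \(\bar F\) is a complement to \(A\) in \(E\). The \(A\)-conjugacy classes of such complements are in bijection with \(H^1(\bar F,A)\). This cohomology group is annihilated by \(\card{\bar F}\) and is finitely generated, hence finite. It follows that there are finitely many \(G\)-conjugacy classes of finite subgroups lying over each class of \(\bar F\), which completes the induction.

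\textbf{Main obstacle.} The key step is the cohomological finiteness of \(H^1(\bar F,A)\). It holds because \(H^1\) of a finite group with coefficients in a finitely generated \(\ZZ\bar F\)-module is finite. In practice I would simply cite Segal's theorem (or Wehrfritz's result) for this, and keep the reduction from elements to subgroups explicit.
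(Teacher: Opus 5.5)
Your proposal is correct and follows the paper's proof. The paper likewise cites Segal's theorem that a virtually polycyclic group has only finitely many conjugacy classes of finite subgroups, then bounds the number of classes of torsion elements by \(\sum_i |F_i|\) over representatives \(F_i\); your bound via generators of cyclic representatives, each of order at most \([G:N]\), is an equivalent count. Your additional Hirsch-length induction is a sound sketch of the cited theorem itself, which the paper does not reprove: you lift through a free abelian normal subgroup \(A \leq N\) and use finiteness of \(H^1(\bar F, A)\) to count \(A\)-conjugacy classes of complements.
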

	\begin{proof}
		Let \(G\) be a virtually polycyclic group. By \cite[Ch.~8, Thm.~5]{sega83-a}, it has only finitely many conjugacy classes of finite subgroups. Let \(F_1, \ldots, F_n\) be representatives of these classes. Every torsion element in \(G\) generates a finite subgroup and is therefore conjugate to an element of some \(F_i\). Thus, the number of conjugacy classes of torsion elements is bounded above by \(\sum_{i=1}^n \card{F_i} < \infty\).
	\end{proof}

	Let \(G \backslash \Tor(G)\) be the set of \(G\)-conjugacy classes of torsion elements of \(G\). For every \(\varphi \in \End(G)\), we define
	\[
	P_\varphi \colon G \backslash \Tor(G) \to G \backslash \Tor(G) \colon [t]_G \mapsto [\varphi(t)]_G.
	\]
	For a given class \([t]_G \in \Fix(P_\varphi)\), choose a representative \(t\) and pick an element \(g_t \in G\) such that
	\[ t = g_t \varphi(t) g_t^{-1}. \]
	Let \(N\) be a finite index, torsion-free, normal subgroup of \(G\) such that \(\varphi(N) \leq N\). To keep notation short and readable, we set
	\[
	C_t \coloneq C_G(t), \quad D_t \coloneq C_N(t), \quad \varphi_t \coloneq \iota_{g_t}\varphi.
	\]
	\begin{lemma}
		The groups \(C_t\) and \(D_t\) are \(\varphi_t\)-invariant, and \(D_t\) is a finite index, torsion-free, normal subgroup of \(C_t\).
	\end{lemma}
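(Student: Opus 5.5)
The plan is to verify each assertion directly from the defining relation \(t = g_t\varphi(t)g_t^{-1}\), that is, \(\varphi_t(t) = t\).

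\emph{Invariance of \(C_t\).} For \(x \in C_t\) we have \(xt = tx\). Applying \(\varphi_t\) gives \(\varphi_t(x)\varphi_t(t) = \varphi_t(t)\varphi_t(x)\). Since \(\varphi_t(t) = t\), this says \(\varphi_t(x)\) commutes with \(t\), so \(\varphi_t(C_t) \leq C_t\).

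\emph{Invariance of \(D_t\).} Since \(N\) is normal in \(G\) and \(\varphi(N) \leq N\), we get \(\varphi_t(N) = g_t\varphi(N)g_t^{-1} \leq N\). Combining this with the previous step,
\[ \varphi_t(D_t) = \varphi_t(N \cap C_t) \leq N \cap C_t = D_t. \]

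\emph{Structure of \(D_t\) inside \(C_t\).} By definition \(D_t = C_t \cap N\). It is normal in \(C_t\) because \(N\) is normal in \(G\). It is torsion-free because \(N\) is. For the index, the natural map \(C_t/D_t \to G/N\) given by \(xD_t \mapsto xN\) is injective, since its kernel is \((C_t \cap N)/D_t\), which is trivial. Hence
\[ \ind{C_t}{D_t} \leq \ind{G}{N} < \infty. \]

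There is no real obstacle here. The only point needing care is to use \(\varphi_t\) rather than \(\varphi\), and to note that \(\varphi_t(N) \leq N\) relies on the normality of \(N\).
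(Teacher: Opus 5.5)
Your proof is correct and follows the paper's argument: the paper likewise uses \(t = g_t\varphi(t)g_t^{-1}\) (that is, \(\varphi_t(t) = t\)) to carry the relation \(ct = tc\) over to \(\varphi_t(c)t = t\varphi_t(c)\), and then deduces all the properties of \(D_t\) from \(D_t = C_t \cap N\). Your explicit checks that \(\varphi_t(N) \leq N\) follows from normality of \(N\), and that \(C_t/D_t\) embeds in \(G/N\), fill in details the paper leaves implicit.
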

	\begin{proof}
		Let \(c \in C_t\). Then \([\varphi(c),\varphi(t)] = \varphi([c,t]) = \varphi(1) = 1\), thus
		\[
		\varphi_t(c) t
		= g_t \varphi(c) \varphi(t) g_t^{-1} 
		= g_t \varphi(t) \varphi(c) g_t^{-1} 
		= t  \varphi_t(c),
		\]
		which means that \(\varphi_t(c) \in C_t\). All of the required properties of \(D_t\) follow from the fact that it equals \(C_t \cap N\).
	\end{proof}
	
	Below, we state the promised formula.
	
	\begin{theorem}
		\label{thm:Rformula}
		Let \(G\) be a virtually polycyclic group, let \(\varphi \in \End(G)\) and let \(N \normalsub G\) be a \(\varphi\)-invariant, torsion-free, normal subgroup of finite index. Then
		\[
		R(\varphi) = \sum_{[t]_G \in \Fix(P_\varphi)} \frac{1}{\ind{C_t}{D_t}} \sum_{cD_t \in C_t/D_t} R(\restr{\iota_c\varphi_t}{D_t}).
		\]
	\end{theorem}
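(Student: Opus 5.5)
We need to partition the \(\varphi\)-twisted conjugacy classes of \(G\) according to a torsion conjugacy class, and count each part via an averaging formula on the torsion-free group \(C_t\). The plan runs as follows.

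\textbf{Step 1: attach a torsion class to each twisted class.} Since \(N\) is \(\varphi\)-invariant of finite index, \(G/N\) is finite. Consider \(\bar G = G/N\) with the induced endomorphism \(\bar\varphi\). I would start from the standard decomposition
\[ \R[\varphi] = \bigsqcup_{[\bar x]_{\bar\varphi}} \R[\varphi]\cap \pi^{-1}([\bar x]_{\bar\varphi}). \]
This reduces everything to understanding the fibres, and a class \([gN]\) contributes \(\frac{1}{|\ldots|}\sum R(\restr{\iota_g\varphi}{N})\)-type terms. These are exactly the sums appearing in the averaging formula \cite[Prop.~2.5.16]{tert19-b} used in \zcref{prop:averagingformula}.

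A better route, matching the statement, is to use the semidirect-like structure \(G\rtimes_\varphi\langle\sigma\rangle\), or more concretely the following observation. For \(x\in G\), the element \(x\varphi\) acts on \(G\), and twisted conjugacy of \(x\) corresponds to conjugacy of the pairs \((x,\varphi)\). I would then use the following reformulation. The map \(\R[\varphi]\to\) (fibres over \(\R[\bar\varphi]\)) via the general formula: for a finite-index \(\varphi\)-invariant normal subgroup \(N\),
\[ R(\varphi)=\sum_{[\bar g]\in\R[\bar\varphi]} \frac{1}{|\Fix(\iota_{\bar g}\bar\varphi)|}\sum_{\bar c\in\Fix(\iota_{\bar g}\bar\varphi)} R(\restr{\iota_{c g}\varphi}{N}), \]
with \(R\) values finite or infinite. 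This is the counting of orbits of the finite group \(\Fix(\iota_{\bar g}\bar\varphi)\) acting on \(\R[\restr{\iota_g\varphi}{N}]\). It uses that the stabilisers are trivial, which comes from torsion-freeness of \(N\) together with \zcref{prop:tfRfiniteFixfinite} when \(R<\infty\); when \(R=\infty\), both sides are infinite.

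\textbf{Step 2: apply Step 1 to each \(C_t\).} Apply the Step 1 formula to \(C_t\), \(D_t\), \(\varphi_t\). The right-hand side of the theorem then becomes a sum over \(\varphi_t\)-twisted classes of \(C_t\) lying over the trivial class of \(C_t/D_t\). Equivalently, by a Burnside-type count, the inner average \(\frac1{[C_t:D_t]}\sum_c R(\restr{\iota_c\varphi_t}{D_t})\) counts the \(\varphi_t\)-twisted classes of the subgroup \(tD_t\)-related part. So the key is a bijection
\[ \R[\varphi]\;\longleftrightarrow\;\bigsqcup_{[t]_G\in\Fix(P_\varphi)} \{\text{suitable }\varphi_t\text{-classes in }C_t\}. \]

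\textbf{Step 3: construct the bijection.} Given \(x\in G\), consider the element \((x,\varphi)\) in a formal extension, or more simply the sequence \(x\varphi(x)\cdots\varphi^{k-1}(x)\). Since \(G/N\) is finite, a suitable \(k\) gives an element of \(N\). The torsion class should arise as follows. Pass to \(\Gamma=G\rtimes\langle\varphi\rangle\) (when \(\varphi\) is an automorphism; in general use the mapping-torus / direct-limit version). Twisted classes of \(G\) correspond to conjugacy classes in the coset \(G\varphi\). Each such element acts on the finite set \(G/N\), and its "finite part" determines a torsion class \(t\) with \([t]\) fixed by \(P_\varphi\). The classes over a given \(t\) then correspond to twisted classes of \(\varphi_t\) on \(C_t\) with torsion-free-part data, counted by the average over \(C_t/D_t\).

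\textbf{Main obstacle.} The hardest step is Step 3: extracting canonically a torsion conjugacy class from each twisted class, and proving that the fibre count equals the \(C_t/D_t\)-average. I would first try to prove the theorem by induction-free comparison. Start from the Step 1 formula for \((G,N)\) and regroup the terms \(\restr{\iota_g\varphi}{N}\) according to the \(G\)-class of torsion \(t\) with \(\iota_g\varphi\) fixing \(t\) up to conjugacy. Then use \zcref{lem:fingrpaverages} together with \zcref{lem:invariances} to convert averages over \(G/N\) into averages over \(C_t/D_t\), and control terms with \(R=\infty\) using \zcref{prop:RfiniteFiniteIndex}. Finiteness of the outer sum is guaranteed by \zcref{prop:segalconjclasses}.
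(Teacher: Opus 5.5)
Your Step 1 formula is false. Take \(G=\ZZ\), \(N=2\ZZ\), \(\varphi=-\id\), so that \(R(\varphi)=2\). The induced map on \(\ZZ/2\ZZ\) is the identity, with two twisted classes, each with stabiliser of order \(2\). Moreover \(R(\restr{\iota_{cg}\varphi}{N})=R(-\id_{2\ZZ})=2\) for all \(c,g\), so your right-hand side is \(4\). Burnside's lemma needs, for each \(\bar c\), the number of classes in \(\R[\restr{\iota_g\varphi}{N}]\) fixed by \(\bar c\), and that is not \(R(\restr{\iota_{cg}\varphi}{N})\).

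Your justification that ``the stabilisers are trivial'' is also wrong, and this is the crux of the theorem. When \(R(\varphi)<\infty\), torsion-freeness of \(N\) together with \zcref{prop:RfiniteFiniteIndex,prop:tfRfiniteFixfinite} only gives \(S_x\cap N=1\), where \(S_x=\Fix(\iota_x\varphi)\) is the stabiliser of \(x\) under twisted conjugation. So \(S_x\) is a finite group embedding in \(G/N\), typically non-trivial when \(G\) has torsion. (If all \(S_x\) were trivial you would get the naive average \(\frac{1}{\ind{G}{N}}\sum_{gN}R(\restr{\iota_g\varphi}{N})\), which already fails for finite \(G\) and \(N=1\).) Corrected, your count points to the right idea: \(\bar c\) fixes the \(N\)-orbit of \(x\in gN\) if and only if \(S_x\cap cN\neq\varnothing\), and this intersection is then a single torsion element. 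Relatedly, \(C_t\) is not torsion-free (it contains \(t\)), and the inner average in the theorem is not \(R(\varphi_t)\); for \(t=1\) it is exactly the naive average above.

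Steps 2--3 then seek a bijection assigning one torsion class to each twisted class, but the formula is not a partition of \(\R[\varphi]\). When \(R(\varphi)<\infty\), the class \([x]_\varphi\) contributes \(\card{S_x\cap[t]_G}/\card{S_x}\) to the summand of \([t]_G\); for example it contributes \(1/\card{S_x}\) to that of \([1]_G\). Nothing in your Step 3 produces these weights. Your fallback of regrouping cosets \(gN\) by ``the'' torsion class fixed by \(\iota_g\varphi\) cannot work either, since a single \(\iota_g\varphi\) fixes many torsion classes.

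What is missing is the weighted count over pairs. Write \(1=\card{S_x}^{-1}\sum_{s\in S_x}1\) and use freeness of the \(N\)-action to get \(R(\varphi)=\card{N\backslash\mathcal{T}}/\ind{G}{N}\), with \(\mathcal{T}=\set{(t,x)\in\Tor(G)\times G}{tx\varphi(t)^{-1}=x}\). Then sort the pairs (not the cosets) by \([t]_G\):
\begin{enumerate}[label=(\alph*)]
\item The class \([t]_G\) splits into \(\ind{G}{N}/\ind{C_t}{D_t}\) classes under \(N\)-conjugation.
\item The set \(Y_t=\set{x\in G}{tx\varphi(t)^{-1}=x}\) is non-empty if and only if \([t]_G\in\Fix(P_\varphi)\), and in that case \(Y_t=C_tg_t\).
\item \(D_t\)-orbits on \(Y_t\) correspond to orbits of \(d\bullet c=dc\varphi_t(d)^{-1}\) on \(C_t\). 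These preserve each coset \(cD_t\), and there are \(R(\restr{\iota_c\varphi_t}{D_t})\) of them in it.
\end{enumerate}
Your handling of \(R(\varphi)=\infty\) via \zcref{prop:RfiniteFiniteIndex} is fine.
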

	\begin{remark}
		If \(G\) is a finite group, we can take \(N = 1\) in \zcref{thm:Rformula} and then \(D_t = 1\) for all \(t\). Thus, the summand of every \([t]_G\) is just
		\[ \frac{1}{\card{C_t}} \sum_{c \in C_t} 1 = 1,\]
		hence we recover a formula discovered by Ado \cite{ado55-a} and Ree \cite{ree59-a}, and later rediscovered by Fel'shtyn and Hill \cite[Thm.~5]{fh94-a}:
		\[ R(\varphi) = \card{\Fix(P_\varphi)} = \card{\set{ [g]_G }{ [g]_G = [\varphi(g)]_G}}. \]
	\end{remark}
	
	\begin{remark}
		\label{rem:formulabecomesavg}
		If \(G\) is torsion-free, then we only need to take the outer sum over a single conjugacy class: that of the identity. Taking \(N\) to be an \NR-subgroup, we get that \(C_1 = G\), \(D_1 = N\), and we can choose \(g_1 = 1\) such that \(\varphi_1 = \varphi\). Using \zcref{prop:NRtauequalsRwhenfinite}, we thus recover the averaging formula from \zcref{prop:averagingformula}.
	\end{remark}
	
	The proof of \zcref{thm:Rformula} will use the following counting formula for group actions. Our only use for it is to apply it to the \(\varphi\)-twisted conjugation action of \(G\) on itself, but it holds in a more general context as well.
	
	\begin{lemma}
		\label{lem:orbitformula}
		Let a group \(G\) act on a set \(X\), and let \(N\) be a finite index, normal subgroup of \(G\). Suppose that the action of \(N\) on \(X\) is free and that the orbit space \(N \backslash X\) is finite. Set
		\[ \mathcal{T} \coloneq \set{ (t,x) \in \Tor(G) \times X }{ t \cdot x = x },\]
		with \(n \cdot (t,x) \coloneq (ntn^{-1}, n \cdot x)\). Then
		\[ \card{G \backslash X} = \frac{\card{N \backslash \mathcal{T}}}{\ind{G}{N}}. \]
	\end{lemma}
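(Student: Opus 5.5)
The plan is to split both sides of the identity along the \(G\)-orbits of \(X\) and to show that each \(G\)-orbit contributes exactly \(\ind{G}{N}\) to \(\card{N \backslash \mathcal{T}}\). Here \(\mathcal{T}\) is regarded with the (diagonal) action of \(N\) given in the statement, which clearly preserves \(\mathcal{T}\).

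First I would record a structural fact about stabilisers. For \(x \in X\), the action of \(N\) is free, so \(G_x \cap N = 1\). Hence \(G_x\) embeds into the finite group \(G/N\). Consequently \(G_x\) is finite and every element of \(G_x\) is torsion, which gives
\[ \mathcal{T} = \set{(t,x)}{x \in X,\ t \in G_x}. \]
Also, \(N \backslash X\) is finite and \(G\)-orbits are unions of \(N\)-orbits, so \(G \backslash X\) is finite. For a \(G\)-orbit \(O = Gx\), set \(\mathcal{T}_O \coloneq \set{(t,y) \in \mathcal{T}}{y \in O}\). The sets \(\mathcal{T}_O\) are \(N\)-invariant and partition \(\mathcal{T}\), so \(\card{N \backslash \mathcal{T}} = \sum_O \card{N \backslash \mathcal{T}_O}\). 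It therefore suffices to show \(\card{N \backslash \mathcal{T}_O} = \ind{G}{N}\) for each orbit \(O\).

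Next I would compute \(\card{N \backslash \mathcal{T}_O}\) via the projection \(\mathcal{T}_O \to O\), which is \(N\)-equivariant.
\begin{enumerate}[label=(\alph*)]
\item If \(n \cdot (t,y) = (t,y)\), then \(n \cdot y = y\), so \(n = 1\); thus \(N\) acts freely on \(\mathcal{T}_O\).
\item Choose a representative \(y\) in each \(N\)-orbit of \(O\). Every \(N\)-orbit of \(\mathcal{T}_O\) lying over \(Ny\) contains elements \((t,y)\) with \(t \in G_y\).
\item Two such elements \((t,y)\) and \((t',y)\) lie in the same \(N\)-orbit only via some \(n\) with \(n \cdot y = y\). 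By freeness \(n = 1\), so \(t = t'\).
\end{enumerate}
Hence the fibre over \(Ny\) in \(N \backslash \mathcal{T}_O\) is in bijection with \(G_y\). Stabilisers along a \(G\)-orbit are conjugate, so \(\card{G_y} = \card{G_x}\) for all \(y \in O\), and therefore
\[ \card{N \backslash \mathcal{T}_O} = \card{N \backslash O} \cdot \card{G_x}. \]

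Finally I would count \(\card{N \backslash O}\). Since \(N\) is normal, \(G/N\) acts transitively on \(N \backslash O\) via \(gN \cdot Ny = Ngy\). The stabiliser of \(Nx\) is \(G_xN/N\), which is isomorphic to \(G_x\) because \(G_x \cap N = 1\). Orbit–stabiliser in the finite group \(G/N\) then gives \(\card{N \backslash O} = \ind{G}{N}/\card{G_x}\). Combining this with the previous display yields \(\card{N \backslash \mathcal{T}_O} = \ind{G}{N}\), and summing over the finitely many \(G\)-orbits gives the lemma.

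The only delicate points are checking that the fibre bijection is well-defined, which uses freeness of \(N\) on \(X\), and identifying the stabiliser of \(Nx\) in \(G/N\). Both reduce to the observation \(G_x \cap N = 1\), so I expect no real obstacle.
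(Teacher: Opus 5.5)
Your proposal is correct and takes essentially the same approach as the paper. Both arguments use $G_x \cap N = 1$ to get finite stabilisers, then orbit--stabiliser for $G/N$ acting on $N\backslash X$ to see that each $G$-orbit contains $\ind{G}{N}/\card{G_x}$ $N$-orbits, and then freeness to show each such $N$-orbit carries exactly $\card{G_y}$ classes of $N \backslash \mathcal{T}$. Your explicit remark that $\mathcal{T} = \set{(t,x)}{t \in G_x}$ (all stabiliser elements are torsion) makes the fibre bijection slightly cleaner than the paper's phrasing.
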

	\begin{proof}
		We denote by \(S_x\) the stabiliser \(\Stab_G(x)\) of \(x \in X\). Since \(S_x \cap N\) is trivial, we have that
		\[ S_x = \frac{S_x}{S_x \cap N} \cong \frac{NS_x}{N} \leq \frac{G}{N},\]
		hence \(S_x\) is finite.
		
		The quotient \(G/N\) acts naturally on \(N \backslash X\) via
		\[ G/N \times N \backslash X \to N \backslash X \colon (gN, N \cdot x ) \mapsto N \cdot (g \cdot x). \]
		For a fixed \(N\)-orbit \(N \cdot x\), its stabiliser under this action is exactly \(NS_x/N\), and hence the \(G\)-orbit \(G \cdot x\) is the union of \(\ind{G}{N} /\!\card{S_x}\) \(N\)-orbits. For each such \(N\)-orbit, we fix a representative \(y\). Let \((t',y') \in \mathcal{T}\) such that \(y' \in N \cdot y\). Its \(N\)-orbit contains a unique representative \((t,y)\) with \(t \in \Tor(G)\), because the action of \(N\) on \(X\) is free. There are \(\card{S_y} = \card{S_x}\) such representatives, hence:
		\[ \card{N\backslash \mathcal{T}} 
		= \sum_{G \cdot x \in G \backslash X} \sum_{N \cdot y \subseteq G \cdot x} \card{S_y} 
		= \sum_{G \cdot x \in G \backslash X} \card{S_x} \frac{\ind{G}{N}}{\card{S_x}} 
		= \ind{G}{N} \card{G\backslash X}, \] 
		which after rearranging gives the desired formula.
	\end{proof}
	
	\begin{proof}[Proof of \zcref{thm:Rformula}]
		For \(g,h \in G\), let \(g \ast h\) denote the \(\varphi\)-twisted conjugacy action of \(G\) on itself:
		\[ g \ast h \coloneq g h \varphi(g)^{-1}.\]
		In the first three steps of this proof, we will assume that \(R(\varphi) < \infty\).
		
		\medskip
		
		\noindent\textbf{Step 1.} The restriction of this action to \(N\) is free. Indeed, suppose that for \(g \in G\) and \(n \in N\), we have that \(n \ast g = n g \varphi(n)^{-1} = g\). Then \(n = (\iota_g\varphi)(n)\), so \(n \in \Fix(\restr{\iota_g\varphi}{N})\). Since \(R(\varphi) < \infty\), by \zcref{prop:RfiniteFiniteIndex} also \(R(\restr{\iota_g\varphi}{N}) < \infty\) and by \zcref{prop:tfRfiniteFixfinite}, this fixed point group is trivial. So \(n = 1\). Also, each \(G\)-orbit contains at most \(\ind{G}{N}\) \(N\)-orbits, so the set of \(N\)-orbits is finite. By \zcref{lem:orbitformula}, we have that
		\begin{equation}
			\label{eqn:Rphiorbitaverage}
			R(\varphi) = \frac{\card{N \backslash \mathcal{T}}}{\ind{G}{N}},
		\end{equation}
		where \(\mathcal{T} \coloneq \set{ (t,x) \in \Tor(G) \times G }{ t \ast x = x }\).
		
		\medskip
		
		\noindent\textbf{Step 2.} We now decompose \(N \backslash \mathcal{T}\) according to the conjugacy class of its first component. Set \(Y_t \coloneq \set{x \in G}{t \ast x = x}\). 
		Two pairs \((t,x)\) and \((t,x')\) belong to the same \(N\)-orbit if and only if \(x' = d \ast x\) for some \(d \in D_t\). Thus, we have
		\[ \card{N \backslash \mathcal{T}} = \sum_{[t]_N, Y_t \neq \varnothing} \card{D_t \backslash Y_t}.\]
		Suppose that \(t' = gtg^{-1}\) for some \(g \in G\). The map \(x \mapsto g \ast x\) induces a bijection \(D_t \backslash Y_t \to D_{t'} \backslash Y_{t'}\). Moreover, \(G/N\) acts on the \(N\)-conjugacy classes of torsion elements of \(G\) via
		\[ gN \cdot [t]_N \coloneq [gtg^{-1}]_{N},\]
		and the stabiliser of \([t]_N\) is then
		\[ \frac{NC_t}{N} \cong \frac{C_t}{C_t \cap N} = \frac{C_t}{D_t}. \]
		Thus, by the orbit-stabiliser theorem, \([t]_G\) is the union of \(\frac{\ind{G}{N}}{\ind{C_t}{D_t}}\) \(N\)-conjugacy classes. Since \(Y_t \neq \varnothing\) if and only if \([t]_G \in \Fix(P_\varphi)\), we conclude that
		\begin{equation}
			\label{eqn:NTsumDtYt}
			\card{N \backslash \mathcal{T}} = \sum_{[t]_G \in \Fix(P_\varphi)} \frac{\ind{G}{N}}{\ind{C_t}{D_t}} \card{D_t \backslash Y_t}.
		\end{equation}
		
		\medskip
		
		\noindent\textbf{Step 3.} We now calculate what \(\card{D_t \backslash Y_t}\) is. If \([t]_G = [\varphi(t)]_G\), then \(Y_t\) is the coset \(C_tg_t\). The action of \(D_t\) on \(Y_t\) is given by \(m \ast y = m y \varphi(m)^{-1}\). We define an action of \(D_t\) on \(C_t\):
		\[ d \bullet c \coloneq d c \varphi_t(d)^{-1}.\]
		If \(y_1, y_2 \in Y_t\) and \(c_1, c_2 \in C_t\) satisfy \(y_i = c_ig_t\) for \(i = 1,2\), then \(y_1 = d \ast y_2\) if and only if \(c_1 = d \bullet c_2\), so the number of orbits for both actions is the same. The action of \(D_t\) on \(C_t\) preserves \(D_t\)-cosets. Let \(d, e \in D_t\) and \(c \in C_t\). Then
		\[ d \bullet ec = d ec \varphi_t(d)^{-1} = d e (\iota_c\varphi_t)(d)^{-1}c \in D_t c = cD_t.\]
		The number of \(D_t\)-orbits in \(cD_t\) is therefore \(R( \restr{\iota_c\varphi_t}{D_t})\), and hence
		\begin{equation}
			\label{eqn:DtYtsumReidNrs}
			\card{D_t \backslash Y_t} = \sum_{cD_t \in C_t/D_t} R(\restr{\iota_c\varphi_t}{D_t}).
		\end{equation}
		Combining \zcref[nocap,abbrev]{eqn:Rphiorbitaverage,eqn:NTsumDtYt,eqn:DtYtsumReidNrs} now gives us the desired formula when \(R(\varphi) < \infty\).
		
		\medskip
		
		\noindent\textbf{Step 4.}  Finally, let us consider the case that \(R(\varphi) = \infty\). By \zcref{prop:RfiniteFiniteIndex} we know that \(R(\restr{\iota_g\varphi}{N})\) is infinite for some \(g \in G\). This Reidemeister number will appear in the summand corresponding to the conjugacy class \([1]_G\), so the right-hand side of the equation is infinite as well.
	\end{proof}
	
	\section{The tamed Reidemeister number}
	\label{sec:TamedRNr}
	
	One of the major limitations in working with Reidemeister zeta functions is that we can only define them for tame endomorphisms. In \cite{vanz25-a}, Vanzeir investigated a workaround in his Master's thesis. He defined the \emph{tamed Reidemeister number} of an endomorphism \(\varphi\) as \(\TR(\varphi) \coloneq \abs{R(\varphi)}_0\).
	For this new invariant, the associated zeta functions (\emph{tamed Reidemeister zeta functions}) can always be defined as formal power series. Moreover, the number \(\TR(\varphi)\) carries the same information as the original number \(R(\varphi)\): as the latter cannot be zero, we know that \(\TR(\varphi) = 0 \iff R(\varphi) = \infty\).
	
	Unfortunately, irrational tamed Reidemeister zeta functions \(\TR_\varphi(z)\) were discovered for even relatively ``easy'' groups, e.g.\@ for \(\ZZ^3 \rtimes_{-I} \ZZ_2\). Moreover, if \(M\) is an infra-solvmanifold with a self-map \(f\) inducing an endomorphism \(f_*\) on its fundamental group, then it need not be the case that \(N(f) = \TR(f_*)\). Thus, in this manuscript, we opt for a different definition of the tamed Reidemeister number.
	
	\subsection{Definition and independence of choices}
	
	We limit our scope to virtually polycyclic groups, and inspired by \zcref{thm:Rformula}, we define the tamed Reidemeister number as follows:
	
	\begin{definition}
		\label{def:tamedReidNr}
		Let \(G\) be a virtually polycyclic group, let \(\varphi \in \End(G)\) and let \(N\) be a finite index, normal \NR-subgroup of \(G\) such that \(\varphi(N) \leq N\). We define the \emph{tamed Reidemeister number} \(\TR(\varphi,N)\) as
		\begin{equation}
			\label{eqn:TRformula}
			\TR(\varphi,N) \coloneq \sum_{[t]_G \in \Fix(P_\varphi)} \frac{1}{\ind{C_t}{D_t}} \sum_{cD_t \in C_t/D_t} \tau(\restr{\iota_c\varphi_t}{D_t}).
		\end{equation}
	\end{definition}
	
	Note that such a subgroup \(N\) always exists by \zcref{prop:fullinvsub}.
	
	\begin{proposition}
		\label{prop:Tinvariance1}
		For fixed \(\varphi\) and \(N\), the value of \(\TR(\varphi,N)\) is independent of all choices and hence well-defined:
		\begin{enumerate}
			\item For fixed \(t\) and \(g_t\), the value \(\tau(\restr{\iota_c\varphi_t}{D_t})\) depends only on the coset \(c D_t\) and not on the choice of representative \(c\).
			\item For every \([t]_G\), its summand does not depend on the choice of \(g_t\).
			\item For every \([t]_G\), its summand does not depend on the choice of \(t\).
		\end{enumerate}
	\end{proposition}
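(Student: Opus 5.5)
The plan is to prove the three items in order, each time reducing to \zcref{lem:invariances} (with \(R\) replaced by \(\tau\)). Before that, two preliminary observations are needed.

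\begin{itemize}
\item Since \(D_t \leq N\) and \(N\) is an \NR-group, \zcref{lem:admissiblesubs} shows that \(D_t\) is an \NR-group, so \(\tau\) is defined on its endomorphisms.
\item Every \(c \in C_t\) normalises \(D_t = C_t \cap N\), because \(N \normalsub G\). Hence \(\iota_c\varphi_t\) maps \(D_t\) into \(D_t\), using that \(D_t\) is \(\varphi_t\)-invariant.
\end{itemize}

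For (1), take \(d \in D_t\). Then \(\restr{\iota_{cd}\varphi_t}{D_t} = \iota_d \circ \restr{\iota_c\varphi_t}{D_t}\), where \(\iota_d\) is now an inner automorphism of the group \(D_t\) itself. By \zcref{lem:invariances}(i), applied to the \NR-group \(D_t\), the two values of \(\tau\) agree.

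For (2), suppose \(g_t'\) is another element with \(t = g_t'\varphi(t)g_t'^{-1}\). Set \(c_0 \coloneq g_t' g_t^{-1}\). Then
\[ c_0 t c_0^{-1} = g_t'\varphi(t)g_t'^{-1} = t, \]
so \(c_0 \in C_t\), and \(\varphi_t' = \iota_{c_0}\varphi_t\). The summand for \(g_t'\) is therefore
\[ \frac{1}{\ind{C_t}{D_t}}\sum_{cD_t} \tau(\restr{\iota_{cc_0}\varphi_t}{D_t}). \]
Since \(D_t \normalsub C_t\), the map \(cD_t \mapsto cc_0D_t\) is a well-defined permutation of \(C_t/D_t\). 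Combined with (1), reindexing the sum shows the summand is unchanged.

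For (3), let \(t' = hth^{-1}\) with \(h \in G\). Then \(C_{t'} = hC_th^{-1}\), and \(D_{t'} = hD_th^{-1}\) because \(N\) is normal, so \(\ind{C_{t'}}{D_{t'}} = \ind{C_t}{D_t}\). By (2) we may choose \(g_{t'}\) freely, and we take
\[ g_{t'} \coloneq h g_t \varphi(h)^{-1}. \]
A direct computation gives \(g_{t'}\varphi(t')g_{t'}^{-1} = hg_t\varphi(t)g_t^{-1}h^{-1} = t'\), so this is a valid choice. Using \(\iota_{\varphi(h)^{-1}}\varphi = \varphi\iota_{h^{-1}}\), we obtain \(\varphi_{t'} = \iota_h\varphi_t\iota_h^{-1}\). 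The cosets of \(C_{t'}/D_{t'}\) are exactly \(hch^{-1}D_{t'}\) for \(cD_t \in C_t/D_t\), and
\[ \iota_{hch^{-1}}\varphi_{t'} = \iota_h(\iota_c\varphi_t)\iota_h^{-1}. \]
So with the isomorphism \(\lambda \coloneq \restr{\iota_h}{D_t}\colon D_t \to D_{t'}\) we have \(\lambda \circ \restr{\iota_c\varphi_t}{D_t} = \restr{\iota_{hch^{-1}}\varphi_{t'}}{D_{t'}} \circ \lambda\). By \zcref{lem:invariances}(ii), the \(\tau\)-values agree term by term, and hence the summands for \(t\) and \(t'\) coincide.

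The main obstacle is not conceptual but bookkeeping in (3). One has to check that the conjugated choice of \(g_{t'}\) is legitimate, which is exactly why (2) must be proved first. One also has to verify that \(c \mapsto hch^{-1}\) induces a bijection \(C_t/D_t \to C_{t'}/D_{t'}\) that is compatible with the intertwining identity for \(\lambda\).
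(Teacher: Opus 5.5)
Your proof follows the paper's argument essentially verbatim: (1) via \zcref{lem:invariances}(i); (2) via \(c_0 = g_t'g_t^{-1} \in C_t\) and the reindexing \(cD_t \mapsto cc_0D_t\); and (3) via the choice \(g_{t'} = hg_t\varphi(h)^{-1}\) together with the intertwining isomorphism \(\restr{\iota_h}{D_t}\) and \zcref{lem:invariances}(ii). There is one small slip in (1). Since \(\iota_{cd} = \iota_c\iota_d\) and not \(\iota_d\iota_c\), the identity you display is false as written. To fix it, write the other representative as \(dc\) (the paper's choice, which is legitimate because \(cD_t = D_tc\)) or as \((cdc^{-1})c\) with \(cdc^{-1} \in D_t\). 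Then \(\restr{\iota_{dc}\varphi_t}{D_t} = \iota_d \circ \restr{\iota_c\varphi_t}{D_t}\) holds and the invariance lemma applies.
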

	\begin{proof}
		We prove this item by item.
		\begin{enumerate}
			\item Suppose \(c,c' \in C_t\) and \(d \in D_t\) are such that \(c' = dc\). Then
			\[
			\tau( \restr{\iota_{c'} \varphi_t}{D_t}) = \tau( \restr{\iota_d \iota_c \varphi_t}{D_t}) = \tau( \restr{ \iota_c \varphi_t}{D_t})
			\]
			due to \zcref{lem:invariances}(i), so the summand of the coset \(c D_t\) is independent of the chosen representative.
			
			\item Let \(g_t,g'_t\) be such that \(t = g_t\varphi(t)g_t^{-1} = g'_t \varphi(t) (g'_t)^{-1}\). Set \(\varphi'_t \coloneq \iota_{g'_t}\varphi\) and \(c' \coloneq g'_t g_t^{-1}\) such that \(\varphi'_t = \iota_{c'} \varphi_t\). Since \(D_t\) is normal in \(C_t\), the map
			\[
			C_t/D_t \to C_t/D_t \colon c D_t \mapsto c c' D_t
			\]
			is a well-defined bijection, and therefore
			\[
			\sum_{cD_t \in C_t/D_t} \tau(\restr{\iota_c\varphi_t}{D_t}) = \sum_{cD_t \in C_t/D_t} \tau(\restr{\iota_{cc'}\varphi_t}{D_t}) = \sum_{cD_t \in C_t/D_t} \tau(\restr{\iota_{c}\varphi'_t}{D_t}),
			\]
			i.e.\@ the total sum remains the same.
			
			\item Let \(t' \coloneq gtg^{-1}\). We may pick \(g_{t'} \coloneq g g_t \varphi(g)^{-1}\), since
			\[
			g_{t'}\varphi(t')g_{t'}^{-1}
			= g g_t \varphi(t) g_t^{-1} g^{-1}
			= t'.
			\]
			Note that \(C_{t'} = \iota_g(C_t)\), \(D_{t'} = \iota_g(D_t)\), and \(\varphi_{t'}\iota_g = \iota_g\varphi_t\). For \(c \in C_t\), we set \(c' \coloneq gcg^{-1}\). Then \(\iota_g \iota_c = \iota_{c'} \iota_g\) and hence
			\(
			(\iota_{c'}\varphi_{t'} \iota_g)(d) = (\iota_g \iota_c \varphi_t)(d)
			\)
			for every \(d \in D_t\). Thus, by \zcref{lem:invariances}(ii),
			\[ \tau( \restr{\iota_{c'}\varphi_{t'}}{D_{t'}}  ) = \tau( \restr{\iota_c \varphi_t}{D_t} ).\]
			Moreover, the map
			\[ \frac{C_t}{D_t} \to \frac{C_{t'}}{D_{t'}} \colon cD_t \mapsto c' D_{t'}\]
			is a bijection, and in particular \(\ind{C_{t'}}{D_{t'}} = \ind{C_t}{D_t}\). Putting everything together, we obtain
			\[
			\frac{1}{\ind{C_t}{D_t}} \sum_{cD_t \in C_t/D_t} \tau( \restr{\iota_c \varphi_t}{D_t} ) = \frac{1}{\ind{C_{t'}}{D_{t'}}} \sum_{c'D_{t'} \in C_{t'}/D_{t'}} \tau( \restr{\iota_{c'} \varphi_{t'}}{D_{t'}} ),
			\]
			hence the total sum also remains unchanged. \qedhere
		\end{enumerate}
	\end{proof}

	\begin{proposition}
		The value of \(\TR(\varphi,N)\) is independent of the choice of \(N\). 
	\end{proposition}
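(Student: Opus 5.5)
Given two admissible subgroups \(N\) and \(N'\), both finite index, normal \NR-subgroups of \(G\) with \(\varphi(N)\leq N\) and \(\varphi(N')\leq N'\), I will compare each with a common smaller subgroup \(K\). By \zcref{prop:fullinvsub} applied to \(H \coloneq N\cap N'\), there is a finite index \NR-subgroup \(K\leq N\cap N'\) that is fully invariant in \(G\). In particular \(K\) is normal and \(\varphi\)-invariant, so \(\TR(\varphi,K)\) is defined. It therefore suffices to prove \(\TR(\varphi,N)=\TR(\varphi,K)\) whenever \(K\leq N\) are two admissible subgroups; the same argument for \(N'\) then finishes the proof.

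Both expressions \zcref[nocap,noname]{eqn:TRformula} sum over the same index set \(\Fix(P_\varphi)\). Moreover, by \zcref{prop:Tinvariance1} we may use the same representatives \(t\) and the same elements \(g_t\), hence the same \(C_t\) and \(\varphi_t\), in both. The only change is \(D_t=C_N(t)\) versus \(E_t\coloneq C_K(t)=C_t\cap K\). So I will compare, for a fixed \(t\), the two averages
\[ \frac{1}{\ind{C_t}{D_t}}\sum_{cD_t\in C_t/D_t}\tau(\restr{\iota_c\varphi_t}{D_t}) \quad\text{and}\quad \frac{1}{\ind{C_t}{E_t}}\sum_{cE_t\in C_t/E_t}\tau(\restr{\iota_c\varphi_t}{E_t}). \]
The groups \(D_t\) and \(E_t\) are \NR-groups by \zcref{lem:admissiblesubs}, since they are subgroups of \(N\). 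Also, \(E_t=D_t\cap K\) is normal of finite index in \(D_t\), and \(\iota_c\varphi_t\) maps \(E_t\) into itself because \(K\) is fully invariant and \(C_t\) is \(\varphi_t\)-invariant. Finally, \(\iota_c\) preserves \(E_t\) for \(c\in C_t\).

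For each \(c\in C_t\) I apply \zcref{prop:nilpfiniteindex} to the \NR-group \(D_t\), its finite index normal subgroup \(E_t\), the endomorphism \(\restr{\iota_c\varphi_t}{D_t}\) and the element \(g=1\). This gives
\[ \tau(\restr{\iota_c\varphi_t}{E_t})=\tau(\restr{\iota_c\varphi_t}{D_t}). \]
Consequently the function \(cE_t\mapsto\tau(\restr{\iota_c\varphi_t}{E_t})\) on \(C_t/E_t\) factors through the natural epimorphism \(C_t/E_t\to C_t/D_t\) as the function \(cD_t\mapsto\tau(\restr{\iota_c\varphi_t}{D_t})\). This function is well-defined by \zcref{prop:Tinvariance1}(1). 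Then \zcref{lem:fingrpaverages} says the two averages coincide. Summing over \(\Fix(P_\varphi)\) gives \(\TR(\varphi,N)=\TR(\varphi,K)\).

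The step I expect to need the most care is checking that the hypotheses of \zcref{prop:nilpfiniteindex} really hold for the pair \(E_t\leq D_t\). One needs \(\iota_c\varphi_t(E_t)\leq E_t\), and this is exactly why I pass to a fully invariant \(K\) rather than working with \(N\cap N'\) directly. One also needs \(D_t\) to be \NR, which comes from \zcref{lem:admissiblesubs}, and \(E_t\) to be normal of finite index in \(D_t\), which follows from \(K\normalsub G\) having finite index. Once these are in place, the rest is bookkeeping.
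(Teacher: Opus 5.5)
Your proof is correct and follows the paper's argument: you reduce to a nested pair, fix the same \(t\) and \(g_t\), apply \zcref{prop:nilpfiniteindex} to \(C_t\cap K\leq C_t\cap N\), and compare the averages via \zcref{lem:fingrpaverages} along \(C_t/E_t\to C_t/D_t\). The only difference is that the paper uses \(N\cap N'\) directly as the common subgroup. Your detour through a fully invariant \(K\) is harmless but unnecessary, and your stated reason for it is mistaken: \(N\cap N'\) is already normal and \(\varphi\)-invariant, which is all that is needed for \(\iota_c\varphi_t\) to preserve \(C_t\cap N\cap N'\).
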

	\begin{proof}
		First, suppose that \(N' \leq N\) are two finite index, \(\varphi\)-invariant, normal \NR-subgroups. By \zcref{prop:Tinvariance1}, we may use the same representatives \(t\) of the conjugacy classes \([t]_G\) of torsion elements, and the same conjugating elements \(g_t\) for both \(\TR(\varphi,N)\) and \(\TR(\varphi,N')\). Let \(D'_t \coloneq C_{N'}(t)\), which is then a normal subgroup of both \(C_t\) and \(D_t\). In particular, it is \(\iota_c\varphi_t\)-invariant for every \(c \in C_t\).
		
		By \zcref{prop:nilpfiniteindex} we have that
		\[ \tau( \restr{\iota_c\varphi_t}{D'_t} ) = \tau( \restr{\iota_c\varphi_t}{D_t} ) \]
		for every \(c \in C_t\), hence we can apply \zcref{lem:fingrpaverages} to the natural epimorphism \(C_t/D'_t \to C_t/D_t\). Thus, for each conjugacy class \([t]_G\), its summand is the same regardless of whether we used \(N\) or \(N'\), and hence \(\TR(\varphi,N) = \TR(\varphi,N')\).
		
		Second, for arbitrary \(N\) and \(N'\), we can apply the previous step to their intersection \(N \cap N'\):
		\[ \TR(\varphi,N) = \TR(\varphi,N \cap N') = \TR(\varphi,N'),\]
		hence \(\TR(\varphi,N)\) is indeed independent of the specific subgroup \(N\).
	\end{proof}
	
	Henceforth, we write \(\TR(\varphi)\) and omit the ``\(N\)''. We remark that the tamed Reidemeister number will always be a non-negative integer. Non-negativity is obvious from the definition; the proof of integrality will have to wait until the end of \zcref{sec:detaverages}.
	
	\subsection{Comparison with Reidemeister and Nielsen numbers}
	
	We check that \(\TR(\varphi)\) coincides with the Reidemeister number \(R(\varphi)\) when the latter is finite, and with the Nielsen number \(N(f)\) of a self-map \(f\) on a compact infra-solvmanifold which induces \(\varphi\) on the (torsion-free) fundamental group.
	
	\begin{proposition}
		\label{prop:reidistamereidwhenfinite}
		Let \(G\) be a virtually polycyclic group and let \(\varphi \in \End(G)\). If \(R(\varphi) < \infty\), then \(\TR(\varphi) = R(\varphi)\).
	\end{proposition}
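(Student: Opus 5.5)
The plan is to compare \zcref{thm:Rformula} with \zcref{def:tamedReidNr} term by term. Use the same subgroup in both: by \zcref{prop:fullinvsub} there is a finite index, fully invariant \NR-subgroup \(N\) of \(G\). Being an \NR-group, it is torsion-free, and being fully invariant, it is normal and \(\varphi\)-invariant. So \(N\) is admissible both in \zcref{thm:Rformula} and in \zcref{def:tamedReidNr}. Fix the same representatives \(t\) and conjugating elements \(g_t\) in both formulas, which we may do by \zcref{prop:Tinvariance1}. Then
\[
R(\varphi) = \sum_{[t]_G \in \Fix(P_\varphi)} \frac{1}{\ind{C_t}{D_t}} \sum_{cD_t \in C_t/D_t} R(\restr{\iota_c\varphi_t}{D_t}),
\]
while \(\TR(\varphi)\) is the same expression with each \(R(\restr{\iota_c\varphi_t}{D_t})\) replaced by \(\tau(\restr{\iota_c\varphi_t}{D_t})\).

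The key step is to check that every summand \(R(\restr{\iota_c\varphi_t}{D_t})\) is finite. Here \(D_t = C_N(t)\) is a subgroup of the \NR-group \(N\), so by \zcref{lem:admissiblesubs} it is an \NR-group. Once finiteness is known, \zcref{prop:NRtauequalsRwhenfinite} gives \(\tau(\restr{\iota_c\varphi_t}{D_t}) = \abs{R(\restr{\iota_c\varphi_t}{D_t})}_0 = R(\restr{\iota_c\varphi_t}{D_t})\). The two sums then agree term by term, which proves the proposition.

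Finiteness follows from the hypothesis \(R(\varphi)<\infty\). The right-hand side of \zcref{thm:Rformula} is a finite sum: there are finitely many classes by \zcref{prop:segalconjclasses}, and each \(C_t/D_t\) is finite. All its terms are positive, and by the theorem the sum equals the finite number \(R(\varphi)\). Hence every term \(R(\restr{\iota_c\varphi_t}{D_t})\) with \([t]_G \in \Fix(P_\varphi)\) and \(c \in C_t\) is finite. This is cleaner than applying \zcref{prop:RfiniteFiniteIndex} repeatedly to \(C_t\), which would require first knowing \(R(\varphi_t|_{C_t})<\infty\).

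The only point needing care is that \zcref{thm:Rformula} really holds as an equality in \(\NN \cup \{\infty\}\), including when individual summands might be infinite. In Step 4 of its proof, the case \(R(\varphi)=\infty\) is treated separately. For \(R(\varphi)<\infty\), Steps 1--3 give an exact equality of cardinalities, and with a finite left-hand side no summand can be infinite. I expect no further obstacle.
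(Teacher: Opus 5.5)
Your proposal is correct and follows the paper's proof. You apply \zcref{thm:Rformula} with a torsion-free, normal, \(\varphi\)-invariant \NR-subgroup \(N\), note that \(R(\varphi)<\infty\) forces every inner summand \(R(\restr{\iota_c\varphi_t}{D_t})\) to be finite, and then replace each such summand by \(\tau(\restr{\iota_c\varphi_t}{D_t})\) using \zcref{prop:NRtauequalsRwhenfinite}. The extra details you give (admissibility of \(N\) in both formulas, \(D_t\) being an \NR-group by \zcref{lem:admissiblesubs}, and why a finite total forces finite terms) are what the paper's one-line proof leaves implicit.
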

	\begin{proof}
		We use \zcref{thm:Rformula} for a suitable subgroup \(N\). If \(R(\varphi) < \infty\), all of the inner summands \(R(\restr{\iota_c\varphi_t}{D_t})\) are finite as well. Thus, by \zcref{prop:NRtauequalsRwhenfinite}, they equal \(\tau(\restr{\iota_c\varphi_t}{D_t})\), and we get exactly the formula for \(\TR(\varphi)\) as stated in \zcref{def:tamedReidNr}.
	\end{proof}
	
	\begin{proposition}
		\label{prop:nielsistamereidwhentf}
		Let \(M\) be a compact infra-solvmanifold with self-map \(f\colon M \to M\), and let \(f_*\) denote the induced endomorphism on the fundamental group. Then \(N(f) = \TR(f_*)\).
	\end{proposition}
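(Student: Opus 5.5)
The fundamental group \(G\) of a compact infra-solvmanifold is torsion-free and virtually polycyclic. This makes the proof a direct comparison between \zcref{def:tamedReidNr} and the Nielsen averaging formula of \zcref{prop:averagingformula}. The plan is to unfold \(\TR(f_*)\) using a well-chosen \(N\), show that the outer sum collapses to a single term, and then recognise that term.

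First, I will use \zcref{prop:fullinvsub} to choose a finite index \NR-subgroup \(N\) of \(G\) that is fully invariant in \(G\). Full invariance makes \(N\) normal and \(f_*\)-invariant, so it is admissible in \zcref{def:tamedReidNr}, and the quantity \(\TR(f_*)\) can be computed with this \(N\). This choice is legitimate because we have already shown that \(\TR\) is independent of \(N\).

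Next, since \(G\) is torsion-free, \(\Tor(G)=\{1\}\). So \(G\backslash\Tor(G)\) consists of the single class \([1]_G\), and this class is fixed by \(P_{f_*}\). For the representative \(t=1\) we may take \(g_1=1\), which is allowed by \zcref{prop:Tinvariance1}. Then \(C_1=G\), \(D_1=C_N(1)=N\), and \(\varphi_1=f_*\). The defining formula \zcref[nocap,noname]{eqn:TRformula} therefore reduces to
\[
\TR(f_*) = \frac{1}{\ind{G}{N}} \sum_{gN\in G/N} \tau(\restr{\iota_g f_*}{N}).
\]
This is exactly the right-hand side of the Nielsen formula in \zcref{prop:averagingformula}, applied with this \(N\). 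That proposition holds for any finite index normal \NR-subgroup \(N\) with \(f_*(N)\le N\), so we conclude \(N(f)=\TR(f_*)\). This is the argument of \zcref{rem:formulabecomesavg}, with \(\tau\) in place of \(R\).

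There is essentially no obstacle here. The only points that need care are bookkeeping. One is checking that the hypotheses of \zcref{prop:averagingformula} are met, namely that \(G\) is torsion-free and \(N\) is normal, \(f_*\)-invariant and \NR. The other is ensuring that the conventions for \(\iota_g\) and \(\varphi_t\) match those of \zcref{prop:averagingformula}, which is immediate once \(g_1=1\).
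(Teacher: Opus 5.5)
Your proposal is correct and follows essentially the same route as the paper. Both choose a fully invariant finite index \NR-subgroup via Proposition~\ref{prop:fullinvsub} and use torsion-freeness to collapse the outer sum to the class of the identity with \(g_1 = 1\), as in Remark~\ref{rem:formulabecomesavg}. Both then identify the resulting average with \(N(f)\) via Proposition~\ref{prop:averagingformula}.
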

	\begin{proof}
		We set \(G \coloneq \pi_1(M)\) and \(\varphi \coloneq f_*\). Using \zcref{prop:fullinvsub}, let \(N\) be a finite index, fully invariant \NR-subgroup of \(G\). As in \zcref{rem:formulabecomesavg}, the formula for \(\TR(\varphi)\) reduces to
		\[ \TR(\varphi) = \frac{1}{\ind{G}{N}} \sum_{gN \in G/N} \tau(\restr{\iota_g\varphi}{N}) = N(f),\]
		where the last equality is given by \zcref{prop:averagingformula}.
	\end{proof}
	
	\section{Determinant averages and associated zeta functions}
	\label{sec:detaverages}
	
	In this section, we consider determinant averages over finite and bounded groups. Our approach follows, in broad strokes, the proof for rationality of Nielsen zeta functions on infra-nilmanifolds in \cite{dd15-a}. In particular, we copied the use of a subgroup of index at most \(2\) to get rid of unwanted absolute values around determinants. However, we diverge from their methods by using compound matrices, rather than Lefschetz numbers and zeta functions, to prove rationality. The use of compound matrices was inspired by the use of exterior powers in \cite{fels91-a}; compound matrices are essentially matrix representations of exterior powers of linear transformations.

	\begin{definition}
		Let \((A_k)_{k \in \NN}\) be a sequence such that \(L \coloneq \limsup_{k \to \infty} \abs{A_k}^{1/k}\) is finite. Its \emph{associated zeta function} is defined as the complex function
		\[ Z_A(z) \coloneq \exp \sum_{k=1}^\infty \frac{A_k}{k} z^k,\]
		which converges when \(\abs{z} < 1/L\) (with the convention that \(1/0 = \infty\)).
	\end{definition}
	
	\begin{proposition}
		\label{prop:generalaverages}
		Let \(G\) be a group, let \(N\) be a finite index, normal subgroup of \(G\), let \(\varphi \in \End(G)\) such that \(\varphi(N) \leq N\), and let \(\rho \colon G \to \GL_n(\ZZ)\) be a homomorphism such that \(\rho(N)\) is abelian. Suppose that \(D\in\ZZ^{n\times n}\) satisfies
		\[ D \rho(g) = \rho(\varphi(g)) D\]
		for every \(g \in G\) and that \(\abs{\det(I-\rho(g)D^k)}\) depends only on \(gN\), for every \(k \geq 1\). Then
		\[ A_k\coloneq\frac{1}{\ind{G}{N}}\sum_{gN\in G/N}\abs{\det(I-\rho(g)D^k)} \]
		is an integer, and its associated zeta function exists and is rational.
	\end{proposition}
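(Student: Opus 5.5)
We follow the strategy of \cite{dd15-a}: first remove the absolute values by passing to a subgroup of index at most \(2\), and then expand determinants via compound matrices to get sums of traces. The latter have visibly rational zeta functions.

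\textbf{Step 1: sign removal.} Since \(\rho(N)\) is abelian, we triangularise \(\rho(N)\) simultaneously over \(\CC\). We write the eigenvalues of \(D\) and of the \(\rho(g)\) in a suitable way so that the sign of \(\det(I-\rho(g)D^k)\) can be controlled as a function of \(g\) and \(k\). As in \cite{dd15-a}, that sign is \((-1)^{p_g+k n_g}\), where \(p_g\) and \(n_g\) count real eigenvalues of \(\rho(g)D\) (or of \(D\) itself) greater than \(1\), respectively less than \(-1\). Here the intertwining relation \(D\rho(g)=\rho(\varphi(g))D\) is what makes \(\rho(g)D^k\) behave like the \(k\)-th power of a twisted element: \((\rho(g)D)^k=\rho(g\varphi(g)\cdots\varphi^{k-1}(g))D^k\).

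Concretely, we choose the subgroup \(G^+\) of index at most \(2\) in \(G\) (containing \(N\)) on which a suitable determinant sign character is \(+1\). This gives
\[
A_k=\frac{1}{\ind{G}{N}}\sum_{gN}\epsilon_k(g)\det(I-\rho(g)D^k),
\]
where \(\epsilon_k(g)\in\{\pm1\}\) depends only on \(gN\), on a fixed sign of \(D\), and on the parity of \(k\). We split the sum over cosets of \(G^+/N\) and over the even and odd \(k\), so that on each piece \(\epsilon_k(g)=\epsilon\,\eta^k\chi(g)\) with \(\chi\) a character \(G/N\to\{\pm1\}\). I expect this step to be the main obstacle. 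The issue is ensuring that \(\abs{\det(I-\rho(g)D^k)}\) depending only on \(gN\) really forces the sign to factor in this multiplicative way, which is where commutativity of \(\rho(N)\) and the hypothesis get used.

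\textbf{Step 2: compounds.} We expand
\[
\det(I-M)=\sum_{j=0}^n(-1)^j\tr\Lambda^j(M),
\]
where \(\Lambda^j\) is the \(j\)-th compound matrix, which is multiplicative. This gives
\[
A_k=\sum_j(-1)^j\epsilon\eta^k\,\frac{1}{\ind{G}{N}}\sum_{gN}\chi(g)\tr\bigl(\Lambda^j\rho(g)\,\Lambda^jD^k\bigr).
\]
The averaged operator \(P_j=\frac{1}{\ind{G}{N}}\sum_{gN}\chi(g)\Lambda^j\rho(g)\) needs to be well defined on cosets. To guarantee this we average instead over a finite-index subgroup \(N'\le N\) on which \(\rho\) is trivial modulo nothing relevant, using \zcref{lem:fingrpaverages} to change the index. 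If \(\rho(N)\) is infinite, we instead note that the trace depends only on \(gN\) after the sign twist and replace \(\rho(g)\) by an average over a finite set of representatives.

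The intertwining relation shows \(P_j\) commutes with \(\Lambda^jD\) in the appropriate sense, so \(A_k=\sum_j\pm\eta^k\tr(Q_jE_j^k)\), with \(Q_j\) commuting with \(E_j=\Lambda^jD\). Decomposing into generalised eigenspaces then gives \(A_k=\sum_i c_i\mu_i^k\) for finitely many complex \(\mu_i\), with \(c_i\) integers. Integrality of \(c_i\) holds because \(P_j\) is a rational idempotent-like projection whose restricted traces are multiplicities. Equivalently, \(\exp\sum_k \tr(QE^k)z^k/k=\det(I-zE|_{\im Q})^{-1}\).

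\textbf{Step 3: conclusion.} Each piece then has zeta function \(\prod_i(1-\mu_iz)^{-c_i}\). Combining the even and odd \(k\) pieces via \(z\mapsto\pm z\) keeps rationality. Integrality of \(A_k\) follows because the zeta function is rational with integer-coefficient factors (the \(\mu_i\) are algebraic integers, being eigenvalues of integer matrices, and Galois-stable with equal \(c_i\)), so \(A_k\) is a rational algebraic integer, hence in \(\ZZ\). Finiteness of \(\limsup\abs{A_k}^{1/k}\le\max\abs{\mu_i}\) gives existence of the zeta function.
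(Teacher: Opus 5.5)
Your proposal has a genuine gap at Step 1, which you flag as the main obstacle but do not resolve, and the argument of \cite{dd15-a} does not transfer as it stands. There the holonomy image \(\rho(G)\) is finite. That finiteness is what makes the non-expanding subspace of \(D\) invariant under \(\rho(G)\), and what forces the determinant of \(\rho(g)\) on the expanding subspace to be \(\pm1\), so that it defines a character \(\chi\) with \(\chi\varphi=\chi\). It also controls the moduli of the eigenvalues of \(\rho(g)D^k\).

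Here \(\rho(N)\) may be infinite with eigenvalues off the unit circle. This already happens for Sol-type \NR-groups, where \(\rho(N)\) acts by powers of a hyperbolic matrix, and then none of these facts is available. Simultaneously triangularising \(\rho(N)\) does not help. Your sign formula also conflates \(\rho(g)D^k\) with \((\rho(g)D)^k=\rho(g\varphi(g)\cdots\varphi^{k-1}(g))D^k\), whose first factor lies in a different coset.

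The missing idea is to use the hypothesis to first remove the unbounded part of \(\rho(N)\). First reduce to \(D\) invertible and \(\rho(N)\) simultaneously \emph{diagonalisable}, not merely triangularisable; \zcref{lem:matrixreduction} achieves this by filtering by the nilpotent elements of the algebra spanned by \(\rho(N)\) and keeping diagonal blocks. The hypothesis then makes \(t\mapsto\det(I-a^t\rho(g)D^k)\) bounded for \(a\in\rho(N)\). A Vandermonde argument (\zcref{lem:positiverescaling}) gives \(\det(I-a_+^t\rho(g)D^k)=\det(I-\rho(g)D^k)\) for all real \(t\), where \(a_+\) is the positive part of \(a\). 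A homomorphism on the normaliser of \(\rho(N)\) that divides out these positive parts (\zcref{lem:positivenormalisation}) then yields a bounded \(\tilde\rho\) and a matrix \(\tilde D\) with the same determinants and intertwining relation. Only after this does the expanding/non-expanding argument give \(\abs{\det(I-\rho(g)D^k)}=\delta\epsilon^k\chi(g)\det(I-\rho(g)D^k)\).

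Step 2 also breaks down exactly when \(\rho(N)\) is infinite.
\begin{enumerate}
\item The operator \(\sum_{gN}\chi(g)\Lambda^j\rho(g)\) is not well defined on cosets.
\item No finite-index \(N'\leq N\) has \(\rho(N')\) trivial, since \(\rho(N')\) has finite index in the infinite group \(\rho(N)\).
\item Averaging over fixed coset representatives destroys the identities \(P\rho(h)=P\), \(P^2=P\) and \(PDP=PD\) that turn averaged traces into traces of powers. The intertwining relation yields \(PDP=PD\), not commutation of \(P\) with \(D\).
\item The individual traces \(\tr(\Lambda^j\rho(g)\Lambda^jD^k)\) do not depend only on \(gN\); only the signed alternating sum over \(j\) does.
\end{enumerate}

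The paper's fix is as follows. For each \(j\), set \(Y_j(g)=\chi(g)\comp{j}{\rho(g)}\) and pass to the quotient of the \(j\)-th compound space \(V_j\) by \(W_j=\Span\set{(I-Y_j(a))v}{a\in N,\ v\in V_j}\). Using simultaneous diagonalisability, a polynomial \(p\) with \(p(1)=1\) vanishing at all other relevant eigenvalues produces an affine combination \(\sum_tc_tY_j(b_t)\), with \(b_t\in N\) and \(\sum_tc_t=1\). This combination kills \(W_j\) and is the identity modulo \(W_j\). Coset invariance of \(\chi(g)\det(I-\rho(g)D^k)\) then gives \(\chi(g)\det(I-\rho(g)D^k)=\sum_j(-1)^j\tr(\bar Y_j(g)\bar X_j^k)\), where \(\bar Y_j\) factors through the finite group \(G/N\). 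So \zcref{lem:traceavgs} applies and gives integer matrices \(M_j\) with \(A_k=\delta\epsilon^k\sum_j(-1)^j\tr(M_j^k)\). This yields integrality directly. Your Galois argument in Step 3 presupposes integer coefficients \(c_i\), which Step 2 never establishes.
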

	
	We postpone the proof of this \zcref[nocap,noref]{prop:generalaverages} until we have proved the necessary lemmas, which are spread over the following three subsections.
	
	\subsection{Tracing the determinants}
	
	We start by explaining why \(\ZZ\)-linear combinations of traces of matrix powers have rational associated zeta functions. Then, we introduce compound matrices to express determinants as alternating sums of traces, and finally we prove the averaging lemma needed to obtain traces of powers.
	
	The following \zcref[noref,nocap]{lem:plemelj} is often named \emph{Plemelj's formula}. For the reader's convenience, we include a short proof.
	\begin{lemma}
		\label{lem:plemelj}
		Let \(D \in \CC^{n \times n}\) with eigenvalues \(\lambda_1, \ldots, \lambda_n\) and let \(z \in \CC\) with \(\abs{z\lambda_i} < 1\) for all \(i = 1, \ldots, n\). Then
		\[ \det(I_n-zD) = \exp \left( - \sum_{k=1}^\infty \frac{ \tr(D^k)}{k} z^k\right). \]
	\end{lemma}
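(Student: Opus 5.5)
The plan is to reduce to the scalar case by triangularising \(D\) over \(\CC\). By Schur's theorem there is an invertible (even unitary) \(U\) with \(U^{-1}DU = T\) upper triangular, and the diagonal entries of \(T\) are \(\lambda_1,\ldots,\lambda_n\). Then \(I_n - zT\) is upper triangular with diagonal entries \(1-z\lambda_i\), and \(T^k\) is upper triangular with diagonal entries \(\lambda_i^k\). Since determinant and trace are invariant under conjugation, this gives
\[ \det(I_n-zD)=\prod_{i=1}^n(1-z\lambda_i), \qquad \tr(D^k)=\sum_{i=1}^n\lambda_i^k. \]

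For the scalar step, use the power series of the principal logarithm: for \(\abs{w}<1\),
\[ \log(1-w) = -\sum_{k\ge1}\frac{w^k}{k}, \]
which converges absolutely. Exponentiating gives \(1-w=\exp\bigl(-\sum_k w^k/k\bigr)\). Apply this with \(w=z\lambda_i\), which is allowed by the hypothesis \(\abs{z\lambda_i}<1\).

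Now take the product over \(i\). The product of exponentials is the exponential of the sum, so
\[ \det(I_n-zD)=\exp\Bigl(-\sum_{i=1}^n\sum_{k\ge1}\frac{(z\lambda_i)^k}{k}\Bigr). \]
Each of the \(n\) inner series converges absolutely, so the finite outer sum may be swapped with the sum over \(k\). This gives \(-\sum_k \tr(D^k)z^k/k\) by the trace identity above, which is the claimed formula.

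No step is a real obstacle. The only point needing care is justifying the interchange of summations, and this is immediate because the outer sum is finite and each inner series converges absolutely. One should also note that the exponential identity holds pointwise for complex \(z\) under the hypothesis, not only as an identity of formal power series, since \(\exp(\log(1-w))=1-w\) for the principal branch on the disc \(\abs{w}<1\).
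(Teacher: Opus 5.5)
Your proposal is correct and follows essentially the same route as the paper. The only difference is that the paper triangularises via the Jordan normal form where you use Schur's theorem; the rest (the series for \(\log(1-w)\) on \(\abs{w}<1\), exponentiating, and exchanging the finite sum over eigenvalues with the sum over \(k\)) is identical.
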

	\begin{proof}
		By using the Jordan normal form, it is clear that
		\[ \det(I_n - zD) = \prod_{i=1}^n (1-z\lambda_i), \qquad \tr(D^k) = \sum_{i=1}^n \lambda_i^k.\]
		Using the power series expansion \(\log(1-w) = -\sum_{k = 1}^\infty w^k/k\) for \(\abs{w} < 1\), we obtain
		\begin{align*}
			\det(I_n - zD)
			&= \exp \left( \sum_{i=1}^n \log(1-z\lambda_i) \right) \\
			&= \exp \left( - \sum_{k=1}^\infty \frac{z^k}{k} \sum_{i=1}^n  \lambda_i^k \right)
			= \exp \left( - \sum_{k=1}^\infty \frac{\tr(D^k)}{k} z^k \right),
		\end{align*}
		for any \(z \in \CC\) with \(\abs{z} < 1/\abs{\lambda_i}\) for all \(i = 1, \ldots,n\) (with the convention that \(1/0 = \infty\)).
	\end{proof}
	
	\begin{corollary}
		\label{cor:zetafuncoftraces}
		Let \(M_i \in \CC^{n_i \times n_i}\) for \(i = 1, \ldots, r\) and let \(c_1, \ldots, c_r \in \ZZ\) be constants. The sequence \((A_k)_{k \in \NN}\) with
		\[ A_k \coloneq \sum_{i=1}^r c_i \tr(M_i^k) \]
		has a rational associated zeta function \(Z_A(z)\), given by
		\[
		Z_A(z) = \prod_{i=1}^r \det(I_{n_i} - z M_i)^{- c_i}.
		\]
	\end{corollary}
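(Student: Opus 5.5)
The plan is to apply \zcref{lem:plemelj} to each matrix \(M_i\) separately and then combine the results multiplicatively.

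\textbf{Radius of convergence.} Let \(\Lambda\) be the maximum modulus of all eigenvalues of all the \(M_i\), with \(1/0 = \infty\). For \(\abs{z} < 1/\Lambda\) every factor \(\abs{z\lambda}\) is less than \(1\). So \zcref{lem:plemelj} gives, for each \(i\),
\[ \exp\left(\sum_{k=1}^\infty \frac{\tr(M_i^k)}{k} z^k\right) = \det(I_{n_i}-zM_i)^{-1}. \]

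\textbf{Combining the factors.} Write \(\tr(M_i^k) = \sum_j \lambda_{ij}^k\). Then \(\abs{A_k} \leq \sum_i \abs{c_i}\, n_i \Lambda^k\), which gives
\[ \limsup_{k\to\infty} \abs{A_k}^{1/k} \leq \Lambda . \]
Hence the associated zeta function \(Z_A\) is defined on \(\abs{z}<1/\Lambda\). On that disc, every series \(\sum_k \tr(M_i^k) z^k/k\) converges absolutely, so I can split the exponent linearly:
\[ \sum_{k=1}^\infty \frac{A_k}{k} z^k = \sum_{i=1}^r c_i \sum_{k=1}^\infty \frac{\tr(M_i^k)}{k} z^k . \]
Exponentiating, and using \(\exp(c\,w) = (\exp w)^{c}\) for the integers \(c = c_i\), I get
\[ Z_A(z) = \prod_{i=1}^r \det(I_{n_i}-zM_i)^{-c_i}. \]
The right-hand side is a quotient of polynomials in \(z\), so \(Z_A\) is rational. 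It then extends meromorphically to all of \(\CC\).

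\textbf{Main subtlety.} There is no real obstacle here. The only point needing care is that the exponents \(c_i\) may be negative. Because they are integers, \(\exp(c_i w) = (\exp w)^{c_i}\) holds without any choice of branch. I must also keep the determinants nonzero on the disc, which they are since all \(\abs{z\lambda}<1\).
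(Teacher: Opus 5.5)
Your proposal is correct and follows essentially the paper's proof. Both bound \(\abs{A_k}\) by \(\sum_i \abs{c_i} n_i \Lambda^k\) to show the zeta function exists, then apply Plemelj's formula to each \(M_i\) and combine the factors multiplicatively. Your remarks about integer exponents and nonvanishing determinants on the disc just make explicit what the paper leaves implicit.
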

	\begin{proof}
		Let \(C \geq 1\) be a constant greater than \(\abs{\lambda}\), for every eigenvalue \(\lambda\) of every matrix \(M_i\). Then for every \(k \geq 1\), we have that
		\[ \abs{A_k} \leq \sum_{i=1}^r \abs{c_i} n_i C^k, \]
		hence \(Z_A(z)\) exists. The expression in terms of determinants is a consequence of Plemelj's formula, and rationality follows from the fact that these determinants are polynomials in \(z\).
	\end{proof}
	
	We now introduce compound matrices and some of their properties.
	
	\begin{definition}
		Let \(D \in \CC^{m \times n}\). Let \(I \subseteq \{1,\ldots,m\}\) and \(J\subseteq \{1,\ldots,n\}\) be index sets of \(0 \leq r \leq \min\{m,n\}\) elements. The \emph{\(r\)-th compound matrix} \(\comp{r}{D}\) of \(D\) is an \(\binom{m}{r} \times \binom{n}{r}\) matrix whose \((I,J)\)-th entry is the minor obtained by retaining the \(r\) rows and columns whose indices appear in \(I\) and \(J\), respectively. The index sets are ordered lexicographically. For \(r = 0\), we set \(\comp{0}{D} \coloneq (1)\).
	\end{definition}
	
	It is worth noting that if \(D\) is a rational (or integer) matrix, then so are all its compound matrices. A very useful property of compound matrices is that they are multiplicative \cite[Eq.~(0.8.1.1)]{hj12-a}:
	
	\begin{lemma}
		Let \(A\) be an \(m \times k\) and \(B\) a \(k \times n\) matrix. For every \(r \leq \min\{m,k,n\}\), we have
		\[ \comp{r}{AB} = \comp{r}{A} \comp{r}{B}.\]
	\end{lemma}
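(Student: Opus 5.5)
The plan is to prove the Cauchy--Binet formula in the form needed, entrywise. Fix index sets \(I \subseteq \{1,\ldots,m\}\) and \(J \subseteq \{1,\ldots,n\}\) of size \(r\). The \((I,J)\)-th entry of \(\comp{r}{AB}\) is \(\det\bigl((AB)_{I,J}\bigr)\), where \(X_{I,J}\) denotes the submatrix of \(X\) with rows in \(I\) and columns in \(J\). The \((I,J)\)-th entry of \(\comp{r}{A}\comp{r}{B}\) is \(\sum_{K} \det(A_{I,K})\det(B_{K,J})\), where \(K\) runs over the \(r\)-element subsets of \(\{1,\ldots,k\}\). Since \((AB)_{I,J} = A_{I,\ast} B_{\ast,J}\) is the product of an \(r\times k\) matrix and a \(k \times r\) matrix, the claim reduces to the classical Cauchy--Binet identity
\[ \det(PQ) = \sum_{\substack{K \subseteq \{1,\ldots,k\}\\ \card{K} = r}} \det(P_{\ast,K}) \det(Q_{K,\ast}), \qquad P \in \CC^{r\times k},\ Q \in \CC^{k\times r}. \]
The case \(r=0\) is trivial, since both sides equal \((1)\).

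To prove this identity, I would expand the determinant multilinearly in the columns. The \(j\)-th column of \(PQ\) equals \(\sum_{l=1}^k Q_{l j} P_{\ast,l}\). Multilinearity then gives
\[ \det(PQ) = \sum_{l_1,\ldots,l_r=1}^{k} Q_{l_1 1}\cdots Q_{l_r r} \det(P_{\ast,l_1},\ldots,P_{\ast,l_r}). \]
Terms with repeated indices vanish, because the determinant is alternating. The remaining tuples \((l_1,\ldots,l_r)\) are exactly the arrangements \(l_j = k_{\sigma(j)}\), where \(K = \{k_1 < \cdots < k_r\}\) and \(\sigma \in S_r\). For such a tuple, reordering the columns gives \(\det(P_{\ast,l_1},\ldots,P_{\ast,l_r}) = \operatorname{sgn}(\sigma)\det(P_{\ast,K})\). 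Grouping the terms by \(K\) leaves \(\det(P_{\ast,K})\sum_{\sigma}\operatorname{sgn}(\sigma)\prod_j Q_{k_{\sigma(j)} j}\), and the inner sum is precisely the Leibniz expansion of \(\det(Q_{K,\ast})\).

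The main obstacle is bookkeeping rather than anything conceptual: matching the permutation sign with the Leibniz formula, and checking that the lexicographic ordering of index sets is used consistently, so that the entrywise identity is literally the matrix product \(\comp{r}{A}\comp{r}{B}\). This ordering only affects the labelling of rows and columns, and it is the same on both sides. Alternatively, one could cite the standard reference given in the statement, or argue via exterior powers: \(\comp{r}{D}\) is the matrix of \(\Lambda^r D\) in the standard wedge basis, and \(\Lambda^r\) is functorial.
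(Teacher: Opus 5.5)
Your proof is correct: you reduce entrywise to the Cauchy--Binet identity for the \(r\times k\) and \(k\times r\) matrices \(A_{I,\ast}\) and \(B_{\ast,J}\), expand multilinearly, and regroup into the Leibniz formula for \(\det(B_{K,J})\), which is exactly the standard argument. The paper does not prove this lemma itself; it cites Horn and Johnson, where multiplicativity of compound matrices also rests on the Cauchy--Binet formula, so you have written out the argument behind that citation.
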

	
	The following is a special case of \cite[Eq.~(0.8.12.3)]{hj12-a}.
	\begin{lemma}
		\label{lem:detalttracesum}
		Let \(D \in \CC^{n \times n}\). Then
		\[ \det(I_n - D) = \sum_{r=0}^n (-1)^r \tr(\comp{r}{D}).\]
	\end{lemma}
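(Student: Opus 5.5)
We prove the identity \(\det(I_n - D) = \sum_{r=0}^n (-1)^r \tr(\comp{r}{D})\) through the characteristic polynomial. The coefficients of that polynomial can be written in terms of principal minors, and each principal minor sum is the trace of a compound matrix.

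\textbf{Step 1.} Consider the polynomial \(p(x) \coloneq \det(xI_n - D)\) and expand it as
\[ p(x) = \sum_{r=0}^n (-1)^r E_r(D)\, x^{n-r}, \]
where \(E_r(D)\) denotes the sum of all \(r \times r\) principal minors of \(D\), and \(E_0(D) = 1\). To prove this expansion, write each column of \(xI_n - D\) as \(x e_j - d_j\), where \(e_j\) is the \(j\)-th standard basis vector and \(d_j\) is the \(j\)-th column of \(D\). Then use multilinearity of the determinant in the columns. Choosing the term \(-d_j\) for \(j\) in an index set \(J\) with \(\card{J} = r\), and \(x e_j\) for the remaining columns, gives \(x^{n-r}(-1)^r\) times the determinant of the submatrix of \(D\) with rows and columns indexed by \(J\). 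To see this, expand along the standard-basis columns. Each such expansion deletes a matching row and column, and the sign is \(+1\) because the row and column indices agree.

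\textbf{Step 2.} Setting \(x = 1\) gives
\[ \det(I_n - D) = \sum_{r=0}^n (-1)^r E_r(D). \]

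\textbf{Step 3.} By the definition of the compound matrix, the \((J,J)\)-th diagonal entry of \(\comp{r}{D}\) is exactly the principal minor indexed by \(J\). Summing over all \(J\) gives \(\tr(\comp{r}{D}) = E_r(D)\). For \(r = 0\) this reads \(\tr((1)) = 1 = E_0(D)\), which agrees with the convention \(\comp{0}{D} = (1)\). Substituting into Step 2 yields the claimed formula.

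The only step requiring care is the sign bookkeeping in the multilinear expansion of Step 1. As an alternative that avoids it, one could instead use eigenvalues. Over \(\CC\), triangularise \(D = PTP^{-1}\) with \(T\) upper triangular. By multiplicativity of compound matrices, \(\comp{r}{D} = \comp{r}{P}\comp{r}{T}\comp{r}{P}^{-1}\), so the traces agree. The matrix \(\comp{r}{T}\) is again upper triangular with respect to the lexicographic order, and its diagonal entries are the products \(\prod_{i\in J}\lambda_i\). Hence \(\tr(\comp{r}{D}) = e_r(\lambda_1,\ldots,\lambda_n)\), the \(r\)-th elementary symmetric polynomial in the eigenvalues. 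The identity \(\prod_i(1-\lambda_i) = \sum_r (-1)^r e_r\) then finishes the proof. Here the point that needs checking is the triangularity of \(\comp{r}{T}\), which holds because a minor of an upper triangular matrix with \(I >J\) in the lexicographic order vanishes. I would present the first argument as the main proof, since it is more elementary.
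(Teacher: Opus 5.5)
Your proof is correct. The paper does not prove this lemma itself. It only quotes it as a special case of Eq.~(0.8.12.3) in Horn--Johnson. Your Steps 1--3 therefore give a self-contained argument where the paper relies on a reference. The sign bookkeeping in the multilinear expansion is handled correctly, and $\tr(\comp{r}{D})$ is indeed, by definition, the sum of the $r \times r$ principal minors.

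You could also skip the polynomial and expand $\det(I_n - D)$ by multilinearity directly at $x = 1$.

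Your eigenvalue alternative is valid too. The triangularity of $\comp{r}{T}$ holds because a nonzero minor $\det T[I,J]$ of an upper triangular $T$ forces $i_k \le j_k$ for every $k$. This is also the viewpoint the paper adopts later, in the proof of the positive-rescaling lemma, where the diagonal entries of compounds of a diagonalised matrix are read off as products of eigenvalues.
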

	
	The following \zcref[noref,nocap]{lem:integerbase} lets us choose a \(\ZZ\)-basis of \(\ZZ^n\) adapted to a series of \(\QQ\)-vector subspaces. 
	\begin{lemma}
		\label{lem:integerbase}
		Let \(0 = V_0 \subseteq V_1 \subseteq \cdots \subseteq V_r = \QQ^n\) be a series of vector subspaces. There is a \(\ZZ\)-basis of \(\ZZ^n\) whose first \(\dim V_i\) vectors span \(V_i\) over \(\QQ\) for every \(i\). With respect to this basis, every integer matrix that leaves every \(V_i\) invariant is block upper triangular, and its (integer) diagonal blocks describe its actions on the quotients \(V_i / V_{i-1}\). Moreover, if its inverse is also an integer matrix and leaves each \(V_i\) invariant, then the inverses of the diagonal blocks are integer matrices as well.
	\end{lemma}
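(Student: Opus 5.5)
The plan is to build the basis inductively from the lattices \(L_i \coloneq V_i \cap \ZZ^n\) and then read off the block structure.

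\textbf{Step 1: the lattices are pure.} Each \(L_i\) is a subgroup of \(\ZZ^n\), hence free abelian. Its rank is \(\dim V_i\), because any \(\QQ\)-basis of \(V_i\) can be scaled into \(\ZZ^n\). It spans \(V_i\) over \(\QQ\). It is also pure in \(\ZZ^n\): if \(mx \in L_i\) with \(m \neq 0\) and \(x \in \ZZ^n\), then \(x \in V_i\). Consequently \(L_i/L_{i-1}\) embeds in \(\ZZ^n/L_{i-1}\). The latter quotient is torsion-free by purity and finitely generated, so it is free abelian, and the same holds for \(L_i/L_{i-1}\). Likewise \(\ZZ^n/L_{i}\) is free for every \(i\).

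\textbf{Step 2: the adapted basis.} Since each \(L_i/L_{i-1}\) is free, the sequence \(0 \to L_{i-1} \to L_i \to L_i/L_{i-1} \to 0\) splits. Lift a \(\ZZ\)-basis of \(L_i/L_{i-1}\) to \(L_i\) and append it to the basis of \(L_{i-1}\) already chosen. Doing this for \(i = 1, \ldots, r\) gives a \(\ZZ\)-basis of \(L_r = \ZZ^n\) whose first \(\dim V_i\) vectors form a \(\ZZ\)-basis of \(L_i\). In particular these vectors span \(V_i\) over \(\QQ\).

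\textbf{Step 3: block structure.} An integer matrix \(A\) that leaves \(V_i\) invariant also maps \(\ZZ^n\) into \(\ZZ^n\), so it leaves \(L_i\) invariant. In the adapted basis this means \(A\) maps the first \(\dim V_i\) basis vectors into their integer span. That is exactly the statement that \(A\) is block upper triangular. The \(i\)-th diagonal block is the integer matrix of the induced map on \(L_i/L_{i-1}\) in the basis from Step 2. Since \(L_i/L_{i-1}\) is a lattice in \(V_i/V_{i-1}\), after tensoring with \(\QQ\) this block describes the action of \(A\) on \(V_i/V_{i-1}\).

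\textbf{Step 4: inverses.} Suppose \(A^{-1}\) is also an integer matrix leaving each \(V_i\) invariant. By Step 3, \(A^{-1}\) is block upper triangular in the same basis. The diagonal blocks of a product of block upper triangular matrices are the products of the corresponding diagonal blocks. From \(AA^{-1} = I = A^{-1}A\) it follows that each diagonal block of \(A^{-1}\) is an integer inverse of the corresponding block of \(A\).

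The only step needing care is purity in Step 1. Without it, \(L_i/L_{i-1}\) could have torsion and no lift of a basis would exist. Once purity is in place, the remaining steps are routine.
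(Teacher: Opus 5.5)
Your proof is correct and follows the paper's argument: you intersect each \(V_i\) with \(\ZZ^n\), use that \(V_i\) is a \(\QQ\)-subspace to see that the successive lattice quotients are torsion-free (hence free), and lift bases of those quotients. You also spell out the block-triangularity and the integrality of the inverse diagonal blocks, which the paper leaves implicit.
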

	\begin{proof}
		For every \(i\), we put \(\Lambda_i \coloneq V_i \cap \ZZ^n\). Pick a \(\QQ\)-basis of \(V_i\), and multiply each of the basis vectors by a positive integer such that it belongs to \(\Lambda_i\) (e.g.\@ the least common multiple of the denominators of its entries). This new basis still spans all of \(V_i\) over \(\QQ\), so the \(\QQ\)-span of \(\Lambda_i\) is \(V_i\).
		
		The quotients \(\Lambda_i/\Lambda_{i-1}\) are free abelian: if \(v \in \Lambda_i\) and for some \(r \in \NN\) we have that \(rv \in V_{i-1}\), then also \(v \in V_{i-1}\) and hence \(v \in \Lambda_{i-1}\).
		
		For each \(i\) we pick a basis of \(\Lambda_i/\Lambda_{i-1}\) and then pick preimages in \(\Lambda_i\) of the basis vectors. All of these preimages together provide the required basis of \(\Lambda_r = \ZZ^n\).
	\end{proof}
	
	\begin{lemma}
		\label{lem:traceavgs}
		Let \(G\) be a finite group, let \(\varphi \in \End(G)\), let \(\rho \colon G \to \GL_n(\RR)\) be a homomorphism and let \(D \in \RR^{n\times n}\) such that \(D \rho(g) = \rho(\varphi(g)) D\) for every \(g \in G\). Then there exists a real matrix \(M\) such that, for every \(k \geq 1\),
		\[
		\frac{1}{\card{G}} \sum_{g \in G} \tr(\rho(g)D^k) = \tr(M^k).
		\]
		If moreover \(D\) and \(\rho(g)\) (for every \(g \in G\)) are integer matrices, then \(M\) can be chosen to be an integer matrix as well.
	\end{lemma}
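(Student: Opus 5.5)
The natural candidate is the averaging projection composed with \(D\). Set
\[ P \coloneq \frac{1}{\card{G}} \sum_{g \in G} \rho(g), \]
and take \(M \coloneq PD\) in the real case. I would carry out the following steps in order.

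\textbf{Step 1: the intertwining relation for \(P\).} From \(D\rho(g) = \rho(\varphi(g))D\) we get
\[ DP = \frac{1}{\card{G}}\sum_{g} \rho(\varphi(g)) D. \]
Since \(\varphi\) need not be surjective, this is not \(PD\) in general. However, \(P\rho(h) = P\) for every \(h\), so \(P\rho(\varphi(g)) = P\) and hence \(PDP = PD\).

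\textbf{Step 2: the trace identity.} Iterating Step 1 gives \((PD)^k = PD^k\), since \(PD\,PD = PDD\) and so on. Therefore
\[ \tr(M^k) = \tr(PD^k) = \frac{1}{\card{G}}\sum_g \tr(\rho(g)D^k). \]
This settles the real statement.

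\textbf{Step 3: the integer refinement.} Here \(PD\) only has rational entries, so I would replace it by an integer matrix with the same traces of powers. Let \(V\) be the image of \(P\) in \(\QQ^n\); \(P\) is an idempotent with rational entries. By Step 1, \(M=PD\) leaves \(V\) invariant and maps everything into \(V\). Take the series \(0 \subseteq V \subseteq \QQ^n\) and apply \zcref{lem:integerbase}: in the resulting \(\ZZ\)-basis, \(M\) is block upper triangular, with diagonal blocks \(M|_V\) and \(0\). Hence \(\tr(M^k) = \tr((M|_V)^k)\).

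\textbf{Step 4: showing \(M|_V\) is integral.} This is the main obstacle, since \(M|_V\) is \(PD\) restricted to the lattice \(\Lambda = V\cap\ZZ^n\) and \(P\) is not integral. On \(V\), \(P\) acts as the identity, and for \(v\in V\) we have \(Dv \in \ZZ^n\) whenever \(v\in\Lambda\). The issue is that \(P(Dv)\) need not lie in \(\ZZ^n\).

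My first fix would be to use \(W\coloneq\) the fixed space \(\set{v}{\rho(g)v=v \ \forall g}\), which equals \(V\), and try to show \(PDv = Dv\) for \(v\in W\). This would hold if \(D\) mapped \(W\) into \(W\). It does when \(\varphi\) is surjective, since then \(\rho(\varphi(g))Dv = D\rho(g)v = Dv\). In general it fails, so I would instead work dually with the coinvariants: consider \(\ZZ^n\) modulo the span of \(\set{(\rho(g)-I)x}{g\in G,\ x \in \ZZ^n}\), made torsion-free. Write \(U\) for the \(\QQ\)-span of the \((\rho(g)-I)x\). The relation
\[ D(\rho(g)-I)x = (\rho(\varphi(g))-I)Dx \]
shows that \(D\) preserves \(U\), which is the kernel of \(P\). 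Now apply \zcref{lem:integerbase} to the series \(0\subseteq U\subseteq\QQ^n\). The diagonal block of \(D\) on \(\QQ^n/U\) is an integer matrix \(\bar D\). On this quotient \(P\) induces the identity, so \(\tr(\bar D^k) = \tr(PD^k)\), because \(PD^k\) vanishes on \(U\) and induces \(\bar D^k\) on the quotient. Hence \(M\coloneq\bar D\) is the required integer matrix.
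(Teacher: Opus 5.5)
Your argument is correct, and its final form is the paper's proof: your \(U\) is exactly \(\ker P\) (since \(P\rho(g)=P\), and \(x=-\frac{1}{\card{G}}\sum_g(\rho(g)-I)x\) whenever \(Px=0\)). It is \(D\)-invariant, and \(M\) is the integer block induced by \(D\) on \(\QQ^n/\ker P\) in a basis from \zcref{lem:integerbase}, with \(\tr(PD^k)=\tr(M^k)\). The differences are cosmetic: for the real case you take \(M=PD\) via \((PD)^k=PD^k\) rather than the quotient block, which would serve for both cases, and the detour through \(\im P\) in Steps 3--4 can simply be dropped.
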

	\begin{proof}
		Let \(P\) be the matrix
		\[ P \coloneq \frac{1}{\card{G}} \sum_{g \in G} \rho(g). \]
		For any \(h \in G\), multiplying \(P\) and \(\rho(h)\) just permutes the summands, hence
		\[ P\rho(h) = P, \qquad P^2 = P.\]
		Combining this with \(D \rho(g) = \rho(\varphi(g)) D\), we find
		\[
		PDP
		= \frac{1}{\card{G}} \sum_{g \in G} PD\rho(g)
		= \frac{1}{\card{G}} \sum_{g \in G} P\rho(\varphi(g)) D
		= \frac{1}{\card{G}} \sum_{g \in G} PD
		= PD,
		\]
		from which we conclude that \(\ker P\) is \(D\)-invariant. Any vector \(v \in \RR^n\) can be written as
		\[ v = (v-Pv) + Pv \in \ker P \oplus \im P,\]
		hence with respect to a suitable basis we have
		\[ P = \begin{pmatrix}
			0 & \ast\\
			0 & I
		\end{pmatrix},
		\qquad
		D = \begin{pmatrix}
			\ast & \ast \\
			0 & M
		\end{pmatrix}\]
		for some matrix \(M\). Therefore
		\[
		\frac{1}{\card{G}} \sum_{g \in G} \tr(\rho(g)D^k) = \tr(PD^k) = \tr(M^k),
		\]
		which shows that \(M\) provides the desired matrix. If \(D\) and all the \(\rho(g)\) are integer matrices, then \(P\) is a rational matrix. By choosing the basis over \(\QQ\) as in \zcref{lem:integerbase}, \(M\) will be an integer matrix.
	\end{proof}
	
	\subsection{Reduction to bounded representations}
	
	In order to prove \zcref{prop:generalaverages}, we need \(\rho\) and \(D\) to be sufficiently ``well-behaved''. By using the assumption that \(\rho(N)\) is abelian, we will first show that we may assume \(D\) is invertible and
	\(\rho(N)\) is simultaneously diagonalisable. We will then construct \(\tilde{\rho}\) and \(\tilde{D}\) such that \(\tilde{\rho}(G)\) is bounded, without changing the determinants or the relation \(D \rho(g) = \rho(\varphi(g)) D\).
	
	\begin{lemma}
		\label{lem:matrixreduction}
		Let \(G\) be a group, let \(N\) be a finite index, normal subgroup of \(G\), let \(\varphi \in \End(G)\) such that \(\varphi(N) \leq N\), and let \(\rho\colon G \to \GL_n(\ZZ)\) be a homomorphism such that \(\rho(N)\) is abelian. Suppose that \(D \in \ZZ^{n\times n}\) satisfies \(D \rho(g)=\rho(\varphi(g)) D\) for every \(g \in G\). Then there exist a homomorphism \(\tilde{\rho}\colon G \to \GL_m(\ZZ)\) and a matrix \(\tilde{D} \in \ZZ^{m \times m}\) with non-zero determinant (with \(m \leq n\)) such that \(\tilde{\rho}(N)\) is simultaneously diagonalisable over \(\CC\) and
		\begin{equation}
			\label{eqn:tildematsrels}
			\tilde{D} \tilde{\rho}(g) = \tilde{\rho}(\varphi(g)) \tilde{D},
			\qquad
			\det(I_n-\rho(g)D^k) = \det(I_m-\tilde{\rho}(g)\tilde{D}^k)
		\end{equation}
		for every \(g \in G\) and every \(k \geq 1\).
	\end{lemma}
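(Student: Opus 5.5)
We would prove \zcref{lem:matrixreduction} in two reductions, each of which replaces \((\rho,D)\) by the action induced on an invariant quotient (or subspace), chosen via \zcref{lem:integerbase} so that everything stays integral.

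\textbf{Step 1: making \(D\) invertible.} Consider the Fitting decomposition of \(D\) over \(\QQ\). Let \(V_0 \coloneq \ker D^n\), the generalised \(0\)-eigenspace, and let \(V_1 \coloneq \im D^n\), so that \(\QQ^n = V_0 \oplus V_1\). Both subspaces are rational and \(D\)-invariant.

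The key point is that \(V_0\) is also \(\rho(g)\)-invariant for every \(g\). Indeed, if \(D^n v = 0\), then iterating \(D\rho(g) = \rho(\varphi(g))D\) gives
\[ D^n\rho(g) v = \rho(\varphi^n(g)) D^n v = 0. \]
Choose a \(\ZZ\)-basis as in \zcref{lem:integerbase} for the flag \(0 \subseteq V_0 \subseteq \QQ^n\). In that basis both \(\rho(g)\) and \(D\) are block upper triangular. The diagonal block of \(D\) on \(V_0\) is nilpotent, and its block on \(\QQ^n/V_0\) is invertible over \(\QQ\), since \(\QQ^n/V_0 \cong V_1\). Because \(\rho(g)^{-1} = \rho(g^{-1})\) also preserves \(V_0\), the diagonal blocks of \(\rho(g)\) lie in \(\GL(\ZZ)\).

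Then \(\rho(g)D^k\) is block triangular, with diagonal blocks \(\rho_0(g)D_0^k\) and \(\rho_1(g)D_1^k\). Since \(\rho(N)\) is abelian and consists of invertible matrices commuting with one another, \(\rho_0(N)\) is abelian as well. To remove the first block, one would like \(\rho_0(g)D_0^k\) to be nilpotent, so that \(\det(I-\rho_0(g)D_0^k) = 1\). I would argue this by refining the flag \(V_0 \supseteq \ker D^{n-1} \supseteq \cdots\). Each \(\ker D^j\) is \(\rho(G)\)-invariant by the same computation as above, and \(D\) maps \(\ker D^j\) into \(\ker D^{j-1}\). Hence on this flag \(\rho_0(g)\) is block triangular and \(D_0\) is strictly block triangular, so their product is strictly block triangular and therefore nilpotent.

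It follows that
\[ \det(I_n-\rho(g)D^k) = \det(I-\rho_1(g)D_1^k), \]
and the intertwining relation passes to the quotient. Setting \(\rho' \coloneq \rho_1\) and \(D' \coloneq D_1\) gives an integer pair with \(\det D' \neq 0\).

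\textbf{Step 2: making \(\rho(N)\) diagonalisable.} This is the main obstacle: the group \(\rho'(N)\) is abelian but may contain unipotent parts. I would take the Jordan decomposition \(\rho'(h) = s_h u_h\) for \(h \in N\). The \(u_h\) form a commuting family of unipotent matrices with rational entries, since the Jordan decomposition is defined over \(\QQ\). Use the flag \(W_j \coloneq\) the common kernel of all products of \(j\) factors of the form \((u_h - I)\), with \(h \in N\). These spaces are rational, and they are \(\rho'(N)\)-invariant because \(N\) is abelian.

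Normality of \(N\) and uniqueness of the Jordan decomposition give \(\rho'(g)u_h\rho'(g)^{-1} = u_{ghg^{-1}}\), so the flag is \(\rho'(G)\)-invariant. For \(D'\), apply the intertwining relation with \(g = h\). Since \(D'\) is invertible, \(\rho'(\varphi(h)) = D'\rho'(h)D'^{-1}\). Uniqueness of the Jordan decomposition then gives \(u_{\varphi(h)} = D'u_hD'^{-1}\), so \(D'\) maps \(W_j\) into the corresponding space for the elements \(\varphi(h)\). Because \(\varphi(N) \leq N\), that space contains \(W_j\), so \(D'W_j \subseteq W_j\).

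Now pass to the direct sum of the flag quotients \(W_j/W_{j-1}\), using an integral basis from \zcref{lem:integerbase}. The block upper triangular form preserves all the determinants \(\det(I-\rho'(g)D'^k)\), and \(\det D' \neq 0\) passes to the block-diagonal matrix. On each quotient \(u_h\) acts trivially, so \(\rho'(h)\) acts as the semisimple \(s_h\). Commuting semisimple matrices are simultaneously diagonalisable over \(\CC\). The intertwining relation holds blockwise.

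Finally, set \(\tilde{\rho}(g) \coloneq\) the block-diagonal matrix of the diagonal blocks of \(\rho'(g)\) and \(\tilde{D} \coloneq\) the block-diagonal matrix of the diagonal blocks of \(D'\). This gives the required \(\tilde{\rho}\) and \(\tilde{D}\), with \(m \leq n\).
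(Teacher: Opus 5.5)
Your overall plan matches the paper's: Step 1 is essentially identical, and in Step 2 you use the unipotent Jordan parts \(u_h\) where the paper uses the nilpotent elements of the \(\QQ\)-span \(\mathcal{A}\) of \(\bar{\rho}(N)\). However, the argument for \(D'W_j \subseteq W_j\) does not work as written.

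You show that \(D'\) maps \(W_j\) into the common kernel \(W_j'\) of all \(j\)-fold products of factors \(u_{\varphi(h)} - I\). You then note that \(\varphi(N) \leq N\) gives \(W_j' \supseteq W_j\). That inclusion points the wrong way. \(W_j'\) is cut out by fewer conditions, so \(D'W_j \subseteq W_j'\) says nothing about whether \(D'W_j \subseteq W_j\). What you actually need is \(W_j' = W_j\): the elements \(\varphi(h)\) must impose as many conditions as all \(h \in N\). This does not follow from \(\varphi(N) \leq N\) alone.

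The missing idea is the dimension count the paper uses to get \(\bar{D}\mathcal{A}\bar{D}^{-1} = \mathcal{A}\). This is exactly where invertibility of \(D'\) is needed. Let \(\mathcal{U} \coloneq \Span_{\QQ}\{\, u_h - I \mid h \in N \,\}\). By multilinearity of products, \(W_j\) is the common kernel of all \(j\)-fold products of elements of \(\mathcal{U}\). Your identity \(u_{\varphi(h)} = D'u_hD'^{-1}\) gives \(D'\mathcal{U}D'^{-1} \subseteq \mathcal{U}\). Conjugation by the invertible \(D'\) is injective, so both spaces have the same dimension and hence \(D'\mathcal{U}D'^{-1} = \mathcal{U}\).

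Now take \(v \in W_j\) and \(Q_1, \ldots, Q_j \in \mathcal{U}\), and write \(Q_i = D'P_iD'^{-1}\) with \(P_i \in \mathcal{U}\). Then \(Q_1 \cdots Q_j D'v = D'P_1 \cdots P_j v = 0\), so \(D'W_j \subseteq W_j\).

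Two smaller points should also be made explicit. First, the flag terminates at the whole space: the elements of \(\mathcal{U}\) are commuting nilpotent matrices, hence simultaneously strictly triangularisable, so every product of \(n\) of them vanishes. Second, in Step 1 the block you need to be abelian is \(\rho_1(N)\), not \(\rho_0(N)\). This holds because taking a diagonal block of block upper triangular matrices is multiplicative.

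With these repairs your proof goes through.
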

	\begin{proof}
		The proof goes in three steps.
		
		\medskip 
		
		\noindent\textbf{Step 1.} In this first step, we pass to a quotient on which \(D\) is invertible. Set \(V \coloneq \QQ^n\) and choose \(r \in \NN\) such that \(W \coloneq \ker D^r = \ker D^{r+1}\). Since \(D^j \rho(g) = \rho(\varphi^j(g)) D^j\), every \(\ker D^j\) is \(\rho(G)\)-invariant. Moreover, \(\rho(g)D^k\) maps \(\ker D^j\) into \(\ker D^{j-1}\), for every \(j,k \geq 1\), so its restriction to \(W\) is nilpotent. Let \(\bar{D}\) and \(\bar{\rho}\) be the induced maps for the quotient \(\bar{V} \coloneq V/W\), and set \(m \coloneq \dim \bar{V}\). With respect to a suitable basis,
		\[
		\rho(g)D^k = \begin{pmatrix}
			\restr{\rho(g)D^k}{W} & \ast \\
			0 & \bar{\rho}(g)\bar{D}^k
		\end{pmatrix}.
		\]
		Since \(\restr{\rho(g)D^k}{W}\) is nilpotent, we get that
		\[ \det(I_n - \rho(g)D^k) = \det(I- \restr{\rho(g)D^k}{W})\det(I_m - \bar{\rho}(g)\bar{D}^k) = \det(I_m - \bar{\rho}(g)\bar{D}^k).\]
		The matrix \(\bar{D}\) is now invertible: if \(\bar{D}(v+W) = 0\), then \(Dv \in W = \ker D^r\), hence \(v \in \ker D^{r+1} = W\). By \zcref{lem:integerbase}, \(\bar{D}\) and \(\bar{\rho}(g)\) are still integer matrices, and \(\bar{D} \bar{\rho}(g) = \bar{\rho}(\varphi(g)) \bar{D}\) still holds.
		
		\medskip
		
		\noindent\textbf{Step 2.} In this second step, we construct a series of invariant subspaces such that the matrices induced by \(\bar{\rho}(N)\) on each successive quotient are simultaneously diagonalisable. Let \(\mathcal{A}\) be the \(\QQ\)-vector space spanned by \(\bar{\rho}(N)\). Since \(\bar{\rho}(N)\) is an abelian group, \(\mathcal{A}\) contains the identity matrix, is closed under multiplication, and its elements commute pairwise. Normality of \(N\) and the matrix relation give us that
		\[
		\bar{\rho}(g) \mathcal{A} \bar{\rho}(g)^{-1} = \mathcal{A},
		\qquad
		\bar{D} \mathcal{A} \bar{D}^{-1}=\mathcal{A},
		\]
		where the second equality follows from the inclusion \(\bar{D} \mathcal{A} \bar{D}^{-1} \subseteq \mathcal{A}\) with both spaces having equal dimension. Denote by \(\mathcal{B}\) the set of nilpotent matrices in \(\mathcal{A}\), and set
		\[
		\bar{V}_0 \coloneq 0,
		\qquad
		\bar{V}_{i+1} \coloneq \set{ v\in \bar{V} }{ Bv \in \bar{V}_i \text{ for every } B \in \mathcal{B}}.
		\]
		All of the \(\bar{V}_i\) are invariant under \(\mathcal{A}\), \(\bar{\rho}(G)\), \(\bar{D}\) and \(\bar{D}^{-1}\). This can be shown by induction: the matrices in \(\mathcal{A}\) commute, and conjugation by \(\bar{D}\) or \(\bar{\rho}(g)\) permutes \(\mathcal B\). In particular, \(B(\bar{V}_i) \subseteq \bar{V}_i\) for every \(B \in \mathcal{B}\), hence \(\bar{V}_i \subseteq \bar{V}_{i+1}\).
		
		We now show that this sequence eventually reaches \(\bar{V}\). The space \(\bar{V}_i\) contains exactly those vectors that are mapped to zero by any product of \(i\) matrices from \(\mathcal{B}\). Since \(\mathcal{A}\) is finite-dimensional, we can choose \(B_1, \ldots, B_s \in \mathcal{B}\) such that every matrix in \(\mathcal{B}\) is a \(\QQ\)-linear combination of these matrices. Note that every \(B_j\) is nilpotent on \(\bar{V}\), hence \(B_j^m = 0\).
		
		Every product of \(sm\) \(B_j\)'s must contain an \(m\)-th (or larger) power of some \(B_j\), and since the \(B_j\)'s commute, this product is then the zero matrix. Since \(\mathcal{B}\) is contained in the \(\QQ\)-span of the \(B_j\)'s, this also holds for every product of \(sm\) matrices from \(\mathcal{B}\). Hence, we find that \(\bar{V}_{sm} = \bar{V}\).
		
		Let \(M \in \bar{\rho}(N)\) and let \(p \in \QQ[z]\) be the product of the distinct monic irreducible factors of its characteristic polynomial. Then \(p(M) \in \mathcal{A}\), and since the characteristic polynomial divides a sufficiently high power of \(p\), the Cayley-Hamilton theorem \cite[Thm.~2.4.3.2]{hj12-a} gives that \(p(M)^q = 0\) for some \(q \geq 1\). Thus \(p(M) \in \mathcal{B}\), and the matrix \(M'_i\) induced by \(M\) on \(\bar{V}_i/\bar{V}_{i-1}\) satisfies \(p(M'_i) = 0\). Since \(p\) has no repeated roots, \(M'_i\) is diagonalisable by \cite[Cor.~3.3.10]{hj12-a}. The induced matrices commute, hence they are simultaneously diagonalisable by \cite[Thm.~1.3.21]{hj12-a}.

		\medskip
		
		\noindent\textbf{Step 3.} In this third and final step, we replace \(\bar{D}\) and \(\bar{\rho}(g)\) by their diagonal blocks, and prove that the resulting matrices have the required properties. Use \zcref{lem:integerbase} to choose a \(\ZZ\)-basis adapted to the subspaces \(\bar{V}_i\). Let \(\tilde{D}\) be the block diagonal matrix whose block diagonal equals that of \(\bar{D}\), and define \(\tilde{\rho}\) similarly. Then \zcref[nocap,abbrev]{eqn:tildematsrels} holds for every \(g \in G\) and every \(k \geq 1\). Moreover, \(\tilde{D}\) is an integer matrix invertible over \(\QQ\), \(\tilde{\rho}(g)\) and \(\tilde{\rho}(g)^{-1}\) are integer matrices, and \(\tilde{\rho}(N)\) is simultaneously diagonalisable.
	\end{proof}
	
	Next, we introduce the positive part of a matrix. This is a real matrix which retains information on the moduli of the eigenvalues of the original matrix. By ``removing'' the positive part from every matrix in a simultaneously diagonalisable group, we will be left with a bounded group.
	
	\begin{definition}
		Let \(M \in \RR^{n \times n}\) be diagonalisable over \(\CC\). If \(P \in \GL_n(\CC)\) is such that \(M = P \diag(\lambda_1, \ldots, \lambda_n) P^{-1}\), we define the \emph{positive part} of \(M\) as the real matrix
		\[ M_+ \coloneq P \diag(\abs{\lambda_1}, \ldots, \abs{\lambda_n}) P^{-1},\]
		such that \(M_+\) acts as multiplication by \(\abs{\lambda_i}\) on the \(\lambda_i\)-eigenspace of \(M\).
	\end{definition}
	
	\begin{lemma}
		\label{lem:positiverescaling}
		Let \(M \in \GL_n(\RR)\) be diagonalisable over \(\CC\), and let \(D \in \RR^{n \times n}\). If the sequence \((\det(I_n - M^k D))_{k \in \ZZ}\) is bounded, then
		\[
		\det(I_n - M_+^t D) = \det(I_n - D)
		\]
		for every \(t \in \RR\).
	\end{lemma}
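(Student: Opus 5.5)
We diagonalise \(M\) and expand both determinants as exponential sums in the exponent. The boundedness hypothesis should then kill every term whose frequency does not have modulus \(1\), and those are the only terms that see \(t\). Write \(M = P\Lambda P^{-1}\) with \(\Lambda = \diag(\lambda_1,\ldots,\lambda_n)\), all \(\lambda_i \neq 0\) since \(M\) is invertible. Then \(M^k = P\Lambda^kP^{-1}\) for every \(k \in \ZZ\), and \(M_+^t = P\diag(\abs{\lambda_1}^t,\ldots,\abs{\lambda_n}^t)P^{-1}\) with the same \(P\). Put \(D' \coloneq P^{-1}DP\). For any diagonal matrix \(\Delta = \diag(\delta_1,\ldots,\delta_n)\) we have \(\det(I_n - P\Delta P^{-1}D) = \det(I_n - \Delta D')\). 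By \zcref{lem:detalttracesum}, and since the trace of the \(r\)-th compound matrix is the sum of the principal \(r \times r\) minors, this equals
\[
\sum_{S \subseteq \{1,\ldots,n\}} (-1)^{\card{S}} \Big(\prod_{i \in S}\delta_i\Big) \det(D'_{S,S}).
\]
Abbreviate \(c_S \coloneq (-1)^{\card{S}}\det(D'_{S,S})\) and \(\lambda_S \coloneq \prod_{i\in S}\lambda_i\). For each of the finitely many distinct values \(\mu\) taken by the \(\lambda_S\), set \(a_\mu \coloneq \sum_{S \colon \lambda_S = \mu} c_S\). We then get
\[
\det(I_n - M^kD) = \sum_\mu a_\mu \mu^k \quad (k \in \ZZ),
\qquad
\det(I_n - M_+^tD) = \sum_\mu a_\mu \abs{\mu}^t \quad (t \in \RR).
\]
The second identity uses that subsets with equal \(\lambda_S\) also have equal \(\prod_{i\in S}\abs{\lambda_i}^t = \abs{\lambda_S}^t\), so the same grouping is legitimate.

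The key step is to show that \(a_\mu = 0\) whenever \(\abs{\mu} \neq 1\). Suppose not, and let \(R > 1\) be the largest modulus with some \(a_\mu \neq 0\). (If no such \(R\) exists, apply the same argument to the smallest modulus \(r<1\) and let \(k \to -\infty\).) Dividing by \(R^k\), the sequence \(R^{-k}\sum_\mu a_\mu\mu^k\) tends to \(0\) as \(k \to +\infty\), because \(f(k)\) is bounded. The terms with \(\abs{\mu}<R\) also tend to \(0\) after this division, so
\[
h(k) \coloneq \sum_{\abs{\mu}=R} a_\mu (\mu/R)^k \longrightarrow 0 .
\]
Here the \(\omega_\mu \coloneq \mu/R\) are distinct points of the unit circle. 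Now take the Cesàro mean \(\frac1K\sum_{k=1}^K \abs{h(k)}^2\). Expanding the square, the cross terms involve geometric sums \(\frac1K\sum_k (\omega_\mu\bar\omega_\nu)^k\) with \(\omega_\mu\bar\omega_\nu \neq 1\) a unimodular number, and these tend to \(0\). So the mean converges to \(\sum_{\abs{\mu}=R}\abs{a_\mu}^2 > 0\), contradicting \(h(k) \to 0\). The case of the smallest modulus \(r < 1\) is symmetric. Iterating over the moduli from the outside in removes all \(\mu\) with \(\abs{\mu}\neq 1\).

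Once this is established, the conclusion follows immediately. Both \(\det(I_n - M_+^tD)\) and \(\det(I_n - D)\) (the case \(k=0\) of the first expansion) equal \(\sum_{\abs{\mu}=1} a_\mu\), because \(\abs{\mu}^t = 1 = \mu^0\) on the surviving terms. The main obstacle is the vanishing step. One must group the subsets \(S\) by the exact complex value \(\lambda_S\), not merely by modulus, so that the frequencies \(\omega_\mu\) are genuinely distinct and the mean-square argument applies. Boundedness must also be used in both directions \(k \to \pm\infty\) to handle moduli above and below \(1\).
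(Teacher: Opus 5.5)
Your proof is correct. Its first half coincides with the paper's: you diagonalise $M$, expand $\det(I_n-\Delta D')$ through principal minors (equivalently, traces of compound matrices), and group the index sets by the exact complex value of the eigenvalue product. This gives $\det(I_n-M^kD)=\sum_\mu a_\mu\mu^k$ and $\det(I_n-M_+^tD)=\sum_\mu a_\mu\abs{\mu}^t$ with the same coefficients.

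You diverge in the vanishing step. The paper inverts the Vandermonde matrix $(\mu_j^{i-1})_{i,j}$. This writes each single term $c_j\mu_j^t$ as a fixed linear combination of the $s$ consecutive values $\det(I_n-M^{t+i-1}D)$, $1\le i\le s$. Boundedness then makes every $c_j\mu_j^t$ individually bounded for $t\in\ZZ$, and $c_j=0$ for $\abs{\mu_j}\ne1$ follows at once by letting $t\to\pm\infty$.

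You instead take the extremal modulus and normalise by $R^k$. You then show, via a Ces\`aro mean-square computation, that a nontrivial sum $\sum a_\mu\omega_\mu^k$ with distinct unimodular $\omega_\mu$ cannot tend to $0$, with a separate case for moduli below $1$.

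Both arguments rest on the same input: distinctness of the $\mu$'s, which you rightly insist on. The Vandermonde argument is purely algebraic, handles all coefficients at once, and needs no ordering of moduli or contradiction. Yours is an analytic linear-independence argument for the characters $k\mapsto\omega^k$, and it costs a little more bookkeeping.

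One small slip: you use $f(k)$ without defining it; it should be $\det(I_n-M^kD)$.
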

	\begin{proof}
		We will work in an eigenbasis of \(M\), and denote the eigenvalues by \(\lambda_1, \ldots, \lambda_n\). By \zcref{lem:detalttracesum} and multiplicativity of compound matrices,
		\begin{align*}
			\det(I_n - M^t D)
			&= \sum_{r=0}^n (-1)^r \tr \left( \comp{r}{M}^t \comp{r}{D}\right), \\
			\det(I_n - M_+^t D)
			&= \sum_{r=0}^n (-1)^r \tr \left( \comp{r}{M_+}^t \comp{r}{D}\right).
		\end{align*}
		The diagonal entries of \(\comp{r}{M}\) are the products \(\prod_{i\in J}\lambda_i\), where \(\card{J}=r\), and the corresponding entries of \(\comp{r}{M_+^t}\) are their absolute values raised to the \(t\)-th power. Let \(\mu_1, \ldots, \mu_s\) be the distinct products occurring for \(0 \leq r \leq n\), including the empty product \(1\). Grouping equal products gives
		\[
		\det(I_n - M^t D) =\sum_{j=1}^s c_j \mu_j^t,
		\qquad
		\det(I_n - M_+^tD) = \sum_{j=1}^s c_j \abs{\mu_j}^t,
		\]
		where \(c_j\) is the sum of the diagonal entries of \((-1)^r \comp{r}{D}\) whose corresponding eigenvalue product is \(\mu_j\). Thus, both sums share these coefficients.
		
		The Vandermonde matrix \(V \coloneq (\mu_j^{i-1})_{1 \leq i,j \leq s}\) is invertible because all of the \(\mu_j\) are distinct. Hence
		\[
		\begin{pmatrix}
			c_1 \mu_1^t\\
			\vdots\\
			c_s \mu_s^t
		\end{pmatrix}
		= V^{-1}
		\begin{pmatrix}
			\det(I_n - M^t D)\\
			\vdots\\
			\det(I_n - M^{t+s-1} D)
		\end{pmatrix}
		\]
		for every \(t \in \ZZ\). The right-hand side is bounded, independently of the value of \(t\). If \(\abs{\mu_j}>1\), letting \(t \to +\infty\) forces \(c_j=0\). Similarly, if \(\abs{\mu_j}<1\), we can use \(t \to -\infty\). The only remaining terms are those with \(\abs{\mu_j}=1\), so
		\[
		\det(I_n - M_+^t D) = \sum_{j=1}^s c_j = \det(I_n - D). \qedhere
		\]
	\end{proof}
	
	\begin{lemma}
		\label{lem:imginvariance}
		Let \(G\) be a group, let \(\varphi \in \End(G)\), and let \(\rho \colon G \to \GL_n(\RR)\) be a homomorphism such that \(\rho(G)\) is simultaneously diagonalisable over \(\CC\). Suppose that \(D \in \GL_n(\RR)\) satisfies \(D\rho(g) = \rho(\varphi(g))D\) for every \(g \in G\). Then
		\[ \rho(\varphi(G)) = \rho(G). \]
	\end{lemma}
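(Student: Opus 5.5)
The inclusion \(\rho(\varphi(G)) \subseteq \rho(G)\) is trivial, so the work is in the reverse inclusion. I would show that some power of \(\varphi\) acts trivially through \(\rho\): I aim to find \(p \geq 1\) with \(\rho(\varphi^p(g)) = \rho(g)\) for every \(g \in G\). Then \(\rho(g) = \rho(\varphi(\varphi^{p-1}(g))) \in \rho(\varphi(G))\), which gives the reverse inclusion.

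First I pass to the algebra generated by the representation. Since \(D\) is invertible, the relation reads \(D\rho(g)D^{-1} = \rho(\varphi(g))\). Let \(\mathcal{A}\) be the \(\CC\)-span of \(\rho(G)\). Because \(\rho(G)\) is a group of simultaneously diagonalisable matrices, \(\mathcal{A}\) is a commutative subalgebra of the diagonal matrices in a common eigenbasis, and it contains \(I_n\). Conjugation by \(D\) maps \(\mathcal{A}\) into itself, since it sends \(\rho(g)\) to \(\rho(\varphi(g))\). It is injective, so it is a bijection by finite-dimensionality, exactly as for \(\bar{D}\mathcal{A}\bar{D}^{-1} = \mathcal{A}\) in the proof of \zcref{lem:matrixreduction}. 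Hence \(X \mapsto DXD^{-1}\) is an algebra automorphism of \(\mathcal{A}\).

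Next I use the primitive idempotents. Decompose \(\CC^n = E_1 \oplus \cdots \oplus E_s\) into the joint eigenspaces of \(\rho(G)\). On \(E_j\), every \(\rho(g)\) acts as a scalar \(\chi_j(g)\), and distinct \(j\) give distinct characters \(\chi_j\). Let \(\pi_j\) be the projection onto \(E_j\) along the others.
\begin{itemize}
\item Each \(\pi_j\) lies in \(\mathcal{A}\): pick finitely many elements of \(\rho(G)\) whose eigenvalues separate the \(E_j\), and take a Lagrange-interpolation polynomial in them.
\item The \(\pi_j\) are precisely the primitive idempotents of \(\mathcal{A}\), because \(\mathcal{A}\) consists of matrices acting as scalars on each \(E_j\).
\end{itemize}
An algebra automorphism permutes the primitive idempotents. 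So \(D\pi_jD^{-1} = \pi_{\sigma(j)}\) for a permutation \(\sigma\), and with \(p\) the order of \(\sigma\), the matrix \(D^p\) commutes with every \(\pi_j\).

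Finally, every \(\rho(g)\) can be written as \(\rho(g) = \sum_j \chi_j(g)\pi_j\). Since \(D^p\) commutes with each \(\pi_j\), this gives \(\rho(\varphi^p(g)) = D^p\rho(g)D^{-p} = \rho(g)\), which is the claim from the first paragraph.

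The main obstacle is the middle step. I need the joint eigenspace projections to lie in \(\mathcal{A}\) and conjugation by \(D\) to preserve \(\mathcal{A}\) bijectively. This is where both invertibility of \(D\) and simultaneous diagonalisability are used. The only delicate point is that \(\rho(G)\) may be infinite, but finite-dimensionality of \(\mathcal{A}\) handles this.
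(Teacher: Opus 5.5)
Your proof is correct and follows essentially the same route as the paper: conjugation by \(D\) preserves the span of \(\rho(G)\), hence permutes the common eigenspaces, so some power \(D^p\) commutes with \(\rho(G)\) and gives \(\rho(\varphi^p(g)) = \rho(g)\). The differences are cosmetic. The paper shows the permutation of eigenspaces directly via eigenvectors (showing \(DW_i\) lies in a common eigenspace) rather than via primitive idempotents. It also finishes with the chain \(\rho(G) \supseteq D\rho(G)D^{-1} \supseteq \cdots \supseteq D^t\rho(G)D^{-t} = \rho(G)\) instead of your identity \(\rho(g) = \rho(\varphi(\varphi^{p-1}(g)))\).
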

	\begin{proof}
		Let \(V\) be the \(\RR\)-vector space spanned by \(\rho(G)\). The matrix relation gives us that
		\[ D\rho(G)D^{-1} = \rho(\varphi(G)) \subseteq \rho(G), \]
		hence \(DVD^{-1} \subseteq V\). Since both spaces have the same dimension, we get that \(DVD^{-1} = V\). Let \(W_1, \ldots, W_r\) be the common eigenspaces of \(\rho(G)\) over \(\CC\). Every matrix in \(V\) acts as scalar multiplication on each \(W_i\). We fix an eigenspace \(W_i\) and a \(g \in G\). Since \(D^{-1}\rho(g)D \in V\), there exists a \(\lambda_{g,i}\) such that \((D^{-1}\rho(g)D)w = \lambda_{g,i} w\) for every \(w \in W_i\). Moreover,
		\[ \rho(g)Dw = D(D^{-1}\rho(g)D)w = \lambda_{g,i} Dw, \]
		so \(DW_i\) is contained in a common eigenspace \(W_j\). Applying the same argument to \(D^{-1}\), we find that \(D\) permutes the common eigenspaces \(W_i\). A suitable power \(D^t\) therefore leaves each eigenspace \(W_i\) invariant and commutes with \(\rho(G)\). Therefore,
		\[
		\rho(G)
		\supseteq D \rho(G) D^{-1}
		\supseteq \cdots
		\supseteq D^t \rho(G) D^{-t}
		= \rho(G),
		\]
		so all inclusions are in fact equalities. Thus,
		\[ \rho(\varphi(G)) = D \rho(G) D^{-1} = \rho(G). \qedhere \]
	\end{proof}
	
	\begin{lemma}
		\label{lem:positivenormalisation}
		Let \(A \leq \GL_n(\RR)\) be simultaneously diagonalisable over \(\CC\), and let \(N\) be its normaliser in \(\GL_n(\RR)\). Let \(\mathfrak{A} \coloneq \langle a_+^t \mid a \in A,\, t \in \RR \rangle\). There exists a homomorphism \(\pi \colon N \to \GL_n(\RR)\) such that
		\[
		\pi(a) = a_+^{-1}a,
		\qquad
		\pi(h)h^{-1} \in \mathfrak{A},
		\]
		for all \(a \in A\) and all \(h \in N\).
	\end{lemma}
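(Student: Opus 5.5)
The plan is to look for \(\pi\) in the form \(\pi(h) = s(h)\,h\), where \(s \colon N \to \mathfrak{A}\) is a map still to be built. Since \(\pi(hk)=\pi(h)\pi(k)\) has to hold, \(s\) must satisfy the crossed-homomorphism identity
\[ s(hk) = s(h)\,\bigl(h\, s(k)\, h^{-1}\bigr), \]
with \(N\) acting on \(\mathfrak{A}\) by conjugation. We also need \(s(a) = a_+^{-1}\) for \(a \in A\). Once such an \(s\) exists, both required properties hold by construction, since \(\pi(h)h^{-1} = s(h) \in \mathfrak{A}\). So the proof reduces to building \(s\).

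First I will describe \(\mathfrak{A}\) concretely. Let \(W_1,\ldots,W_r\) be the common eigenspaces of \(A\) over \(\CC\), and write \(a|_{W_i} = \lambda_i(a)\). Then \(a_+^t\) acts on \(W_i\) by the scalar \(\abs{\lambda_i(a)}^t\). Because \(A\) is abelian and simultaneously diagonalised, \(a \mapsto a_+\) is a homomorphism. Hence \(\mathfrak{A}\) consists exactly of the matrices acting on each \(W_i\) by \(e^{x_i}\), where \(x\) ranges over the real span \(L \subseteq \RR^r\) of the vectors \(\ell(a) \coloneq (\log\abs{\lambda_i(a)})_i\). In particular \(\log\colon \mathfrak{A} \to L\) is an isomorphism onto a real vector group.

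Next I handle the action of \(N\). Exactly as in the proof of \zcref{lem:imginvariance}, any \(h \in N\) permutes the \(W_i\) by some permutation \(\sigma_h\), and necessarily \(\dim W_i = \dim W_{\sigma_h(i)}\). Conjugation by \(h\) sends the element of \(\mathfrak{A}\) with coordinates \((x_i)\) to the one with coordinates \((x_{\sigma_h^{-1}(j)})_j\). So under \(\log\), the action of \(N\) on \(\mathfrak{A}\) is the coordinate-permutation action on \(\RR^r\). Since \(hAh^{-1}=A\), the subspace \(L\) is invariant under this action.

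To construct \(s\), I will choose a Hermitian inner product on \(\CC^n\) for which the \(W_i\) are mutually orthogonal. It should come from a real inner product, so that complex-conjugate eigenspaces are treated symmetrically; this keeps everything real. For \(h \in N\), set
\[ c(h)_{\sigma_h(i)} \coloneq \tfrac{1}{\dim W_i}\log\abs{\det\bigl(h|_{W_i}\colon W_i \to W_{\sigma_h(i)}\bigr)}, \]
where the determinant is taken with respect to orthonormal bases, so its modulus is well defined. Multiplicativity of determinants along the chain \(W_i \to W_{\sigma_k(i)} \to W_{\sigma_{hk}(i)}\) gives the cocycle identity \(c(hk) = c(h) + \sigma_h\cdot c(k)\). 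For \(a \in A\) we get \(c(a) = \ell(a) \in L\).

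The key step is to push \(c\) into \(L\) compatibly with the action. Let \(p\colon \RR^r \to L\) be the orthogonal projection for the standard inner product. Since \(L\) is invariant under coordinate permutations, which are orthogonal maps, \(p\) commutes with every \(\sigma_h\). So \(p\circ c\) is still a cocycle, and it agrees with \(\ell\) on \(A\). Now define \(s(h) \coloneq \exp(-p\,c(h)) \in \mathfrak{A}\). This is a crossed homomorphism with \(s(a) = a_+^{-1}\), and \(\pi(h) \coloneq s(h)h\) gives the required homomorphism.

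The main obstacle I expect is checking the details of this cocycle: that \(c\) is independent of choices, and that \(s(h)\) is a real matrix. The latter needs \(c(h)\) to take equal values on complex-conjugate eigenspaces. That should follow because \(h\) is real and commutes with complex conjugation, which swaps \(\overline{W_i}\) and \(W_i\) isometrically.
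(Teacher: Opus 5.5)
Your proposal is correct and is essentially the paper's proof. The paper likewise builds the log-determinant cocycle \(\mathcal{D}(hh') = \mathcal{D}(h) + \mathcal{S}_h\mathcal{D}(h')\) from the blocks of \(h\) between common eigenspaces, projects it orthogonally onto the permutation-invariant span \(W\) of the vectors \(\ell(a)\), and sets \(\pi(h) = M_{-\mathcal{P}\mathcal{D}(h)}h\). The point you flag as the main obstacle is in fact automatic: every \(\exp(x)\) with \(x\) in that span is a product of real matrices \((a_j)_+^{t_j}\), hence real, and the paper simply works with an arbitrary fixed eigenbasis, so your orthonormal bases and the conjugate-symmetry of \(c(h)\) are not needed.
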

	\begin{proof}
		Let \(V_1, \ldots, V_r\) be the common eigenspaces of \(A\) and set \(d_i \coloneq \dim_{\CC} V_i\). We will work in an eigenbasis corresponding to these eigenspaces. For each \(i = 1, \ldots, r\), we define \(\lambda_i \colon A \to \CC^\ast\) such that \(\restr{a}{V_i} = \lambda_i(a)I\). We set
		\[
		\ell(a) \coloneq (\log\abs{\lambda_i(a)})_{i=1}^r \in \RR^r,
		\qquad
		W \coloneq \Span_{\RR}\ell(A).
		\]
		To each \(x \in \RR^r\), we associate a matrix \(M_x\). With respect to an eigenbasis corresponding to the eigenspaces \(V_i\), this matrix is diagonal such that its entries corresponding to \(V_i\) are \(e^{x_i}\). Clearly, \(M_x M_y = M_{x+y}\) for all \(x,y \in \RR^r\). Since
		\[
		M_{\sum_j t_j\ell(a_j)}=\prod_j(a_j)_+^{t_j},
		\]
		we have that \(\mathfrak{A} = \set{M_x}{x\in W}\), and every \(M_x\) with \(x \in W\) is real.
		
		Each element \(h \in N\) induces a permutation \(\sigma_h\) of the common eigenspaces, given by \(h V_i = V_{\sigma_h(i)}\). We also get a linear map \(\mathcal{S}_h\) on \(\RR^r\) such that \((\mathcal{S}_h x)_{\sigma_h(i)} = x_i\) for every \(x \in \RR^r\). Then
		\[
		hM_xh^{-1}=M_{\mathcal{S}_h x},
		\qquad
		\ell(hah^{-1}) = \mathcal{S}_h \ell(a),
		\]
		thus \(W\) is invariant under \(\mathcal{S}_h\). Denote by \(\mathcal{P} \colon \RR^r \to W\) the orthogonal projection. We have that \(\mathcal{P} \mathcal{S}_h = \mathcal{S}_h \mathcal{P}\).
		
		We now define a map \(\mathcal{D} \colon N \to \RR^r\) given by
		\[ \mathcal{D}(h)_{\sigma_h(i)} \coloneq \frac{1}{d_i} \log \abs{ \det(h[\sigma_h(i),i])  }, \]
		where \(h[\sigma_h(i),i]\) is the \(d_i \times d_i\) block of \(h\) corresponding to how \(h\) maps \(V_i\) to \(V_{\sigma_h(i)}\).
		
		If we set \(j \coloneq \sigma_{h'}(i)\) and \(k \coloneq \sigma_h(j)\), then the block of \(hh'\) mapping \(V_i\) to \(V_k\) is the product of the block of \(h\) mapping \(V_j\) to \(V_k\) and the block of \(h'\) mapping \(V_i\) to \(V_j\), i.e.\@ \((hh')[k,i] = h[k,j]h'[j,i]\). Therefore:
		\[
		\mathcal{D}(hh') = \mathcal{D}(h) + \mathcal{S}_h \mathcal{D}(h').
		\]
		For \(a\in A\), we have \(\det(\restr{a}{V_i}) = \lambda_i(a)^{d_i}\), hence \(\mathcal{D}(a)=\ell(a)\). We can finally define the required morphism:
		\[ \pi \colon N \to \GL_n(\RR) \colon h \mapsto M_{-\mathcal{P}\mathcal{D}(h)} h.\]
		Essentially, this removes only the part of the logarithmic determinants lying in \(W\). Using the identities above, we get
		\begin{align*}
			\pi(h)\pi(h')
			&= M_{-\mathcal{P} \mathcal{D}(h)} M_{-\mathcal{S}_h \mathcal{P} \mathcal{D}(h')}hh'\\
			&=M_{-\mathcal{P}( \mathcal{D}(h) + \mathcal{S}_h \mathcal{D}(h') )} h h'
			=\pi(hh').
		\end{align*}
		Thus, \(\pi\) is a homomorphism. Since \(\mathcal{P}\ell(a)=\ell(a)\), we get that \(\pi(a)=M_{-\ell(a)}a=a_+^{-1}a\), and \(\pi(h)h^{-1}=M_{-\mathcal{P}\mathcal{D}(h)} \in \mathfrak{A}\).
	\end{proof}
	
	\begin{lemma}
		\label{lem:boundedreduction}
		Let \(G\) be a group, let \(N\) be a finite index, normal subgroup of \(G\), let \(\varphi \in \End(G)\) such that \(\varphi(N) \leq N\), and let \(\rho\colon G \to \GL_n(\RR)\) be a homomorphism such that \(\rho(N)\) is simultaneously diagonalisable over \(\CC\). Suppose that \(D \in \GL_n(\RR)\) satisfies \(D \rho(g)=\rho(\varphi(g)) D\) for every \(g \in G\). If \(\abs{\det(I_n - \rho(g)D^k)}\) depends only on the coset \(gN\), for every \(k \geq 1\), then there exists a homomorphism \(\tilde{\rho} \colon G \to \GL_n(\RR)\) and a matrix \(\tilde{D} \in \GL_n(\RR)\) such that \(\tilde{\rho}(G)\) is bounded and
		\[
			\tilde{D} \tilde{\rho}(g) = \tilde{\rho}(\varphi(g)) \tilde{D},
			\qquad
			\det(I_n-\rho(g)D^k) = \det(I_n-\tilde{\rho}(g)\tilde{D}^k)
		\]
		for every \(g \in G\) and every \(k \geq 1\).
	\end{lemma}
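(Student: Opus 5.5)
Set \(A \coloneq \rho(N)\), which is simultaneously diagonalisable, and let \(\mathcal{N}\) be its normaliser in \(\GL_n(\RR)\). Since \(N\) is normal in \(G\), \(\rho(G) \leq \mathcal{N}\). By \zcref{lem:imginvariance} applied to \(\restr{\varphi}{N}\) (with \(\varphi(N) \leq N\)), we get \(\rho(\varphi(N)) = A\), so \(DAD^{-1} = A\) and \(D \in \mathcal{N}\). Apply \zcref{lem:positivenormalisation} to \(A\) to obtain \(\pi \colon \mathcal{N} \to \GL_n(\RR)\), and define
\[ \tilde{\rho} \coloneq \pi \circ \rho, \qquad \tilde{D} \coloneq \pi(D). \]
Then \(\tilde{\rho}\) is a homomorphism, and since \(\pi\) is a homomorphism on \(\mathcal{N}\), applying \(\pi\) to \(D\rho(g) = \rho(\varphi(g))D\) gives \(\tilde{D}\tilde{\rho}(g) = \tilde{\rho}(\varphi(g))\tilde{D}\).

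\textbf{Boundedness.} On \(A\) we have \(\pi(a) = a_+^{-1}a\), which is diagonal in the common eigenbasis with eigenvalues of modulus \(1\). Thus \(\pi(A)\) lies in a compact torus (a conjugate of a subgroup of diagonal unitary matrices) and is bounded. Since \(\tilde{\rho}(G)\) is a finite union of cosets \(\tilde{\rho}(g)\pi(A)\) (because \(\ind{G}{N} < \infty\)), it is bounded as well.

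\textbf{Determinants.} This is the main step. Fix \(g \in G\) and \(k \geq 1\). Since \(\pi(h)h^{-1} \in \mathfrak{A}\) for every \(h \in \mathcal{N}\), and \(\mathcal{N}\) normalises \(\mathfrak{A}\) (conjugation by \(h\) sends \(a_+^t\) to \((hah^{-1})_+^t\)), we can collect all the correction factors to the left:
\[ \tilde{\rho}(g)\tilde{D}^k = X\,\rho(g)D^k \quad \text{for some } X \in \mathfrak{A}. \]
The group \(\mathfrak{A}\) consists of the matrices \(M_x\) with \(x \in W = \Span_\RR \ell(A)\). Choose \(a_1,\dots,a_s \in A\) with \(\ell(a_j)\) spanning \(W\). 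Then \(X = \prod_j (a_j)_+^{t_j}\) for some real \(t_j\).

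Now use the hypothesis. For every \(m = (m_1,\dots,m_s) \in \ZZ^s\), the element \(a^m \coloneq \prod_j a_j^{m_j}\) lies in \(\rho(N)\). Using normality of \(N\), we can write \(a^m\rho(g) = \rho(g')\) with \(g'N = gN\). Hence \(\abs{\det(I - a^m \rho(g)D^k)} = \abs{\det(I - \rho(g)D^k)}\) is bounded independently of \(m\). Apply \zcref{lem:positiverescaling} one generator at a time, with \(M = a_j\) and \(D\) replaced by \(\prod_{i<j}(a_i)_+^{t_i}\cdot \rho(g)D^k\). Such a product lies in \(\rho(g)D^k\) times a commuting family, so boundedness of \(\det(I - a_j^{m}\,\cdot)\) in \(m\) is preserved along the way. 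We need boundedness to survive the earlier real rescalings. To guarantee this, I would instead prove a multi-parameter version of \zcref{lem:positiverescaling} directly, using the same Vandermonde/exponential-sum argument. Expand \(\det(I - a^m Y)\) as \(\sum c_\mu \mu^m\) over characters \(\mu\) of products of eigenvalues. Boundedness on \(\ZZ^s\) kills every term with \(\abs{\mu} \neq 1\). Hence \(\det(I - \prod_j (a_j)_+^{t_j} Y) = \det(I - Y)\) for all real \(t\), with \(Y = \rho(g)D^k\).

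This multi-parameter extension is the main obstacle. It requires an independence argument for distinct characters \(m \mapsto \mu^m\) on \(\ZZ^s\) (for instance iterated one-variable Vandermonde arguments) to separate the terms and conclude that the coefficients \(c_\mu\) with \(\abs{\mu}\neq 1\) vanish. Once it is in place, we conclude \(\det(I - \tilde{\rho}(g)\tilde{D}^k) = \det(I - \rho(g)D^k)\), as required.
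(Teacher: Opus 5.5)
Your proposal is correct. The setup is exactly the paper's: the normaliser of $A=\rho(N)$, \zcref{lem:imginvariance} to place $D$ in it, $\tilde{\rho}=\pi\rho$, $\tilde{D}=\pi(D)$, and the boundedness argument.

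The difference lies in how you pass from the hypothesis on integer powers to an arbitrary correction factor $u\in\mathfrak{A}$. The paper keeps \zcref{lem:positiverescaling} one-parameter, in three moves:
\begin{enumerate}
\item it obtains $\det(I_n-a_+^t\rho(g)D^k)=\det(I_n-\rho(g)D^k)$ for a single $a\in A$ and real $t$;
\item for rational exponents $t_i=n_i/d$ it uses $\prod_i (a_i)_+^{t_i}=(\prod_i a_i^{n_i})_+^{1/d}$, which is again one positive part of one element of $A$;
\item it reaches real exponents by continuity and density of $\QQ^s$ in $\RR^s$.
\end{enumerate}
You instead prove an $s$-parameter version of the exponential-sum argument. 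This handles all real exponents at once and avoids the density step. The price is that you need linear independence of distinct characters $m\mapsto\mu^m$ of $\ZZ^s$. That fact is standard, and your iterated Vandermonde sketch does establish it, provided you run the one-variable argument along every coordinate direction, so that every $\mu$ with some $\abs{\mu_j}\neq 1$ is eliminated.

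Two small remarks. First, you need not collect correction factors via the fact that $\mathcal{N}$ normalises $\mathfrak{A}$. Since $\tilde{\rho}(g)\tilde{D}^k=\pi(\rho(g)D^k)$ and $\rho(g)D^k\in\mathcal{N}$, \zcref{lem:positivenormalisation} applied to $h=\rho(g)D^k$ gives $X$ directly.

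Second, the sequential route you abandoned works without any new lemma. Write $Y=\rho(g)D^k$. By induction on $j$, show that $\det(I_n-(a_1)_+^{t_1}\cdots(a_j)_+^{t_j}a_{j+1}^{m_{j+1}}\cdots a_s^{m_s}Y)=\det(I_n-a_{j+1}^{m_{j+1}}\cdots a_s^{m_s}Y)$ for all real $t_i$ and integer $m_i$. At step $j$, apply \zcref{lem:positiverescaling} with $M=a_j$ and $D$ replaced by $Z=(a_1)_+^{t_1}\cdots(a_{j-1})_+^{t_{j-1}}a_{j+1}^{m_{j+1}}\cdots a_s^{m_s}Y$. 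All these matrices commute, so the induction hypothesis shows that $\det(I_n-a_j^mZ)$ equals $\det(I_n-a_j^m a_{j+1}^{m_{j+1}}\cdots a_s^{m_s}Y)$ for every $m\in\ZZ$. That quantity has constant modulus $\abs{\det(I_n-Y)}$. So boundedness is not lost after the real rescalings, because those rescalings do not change the determinant at all.
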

	\begin{proof}
		Set \(A \coloneq \rho(N)\).  We apply \zcref{lem:imginvariance} to \(N\), \(\restr{\varphi}{N}\), \(\restr{\rho}{N}\) and \(D\), which gives us that \(DAD^{-1} = \rho(\varphi(N)) = A\). Since \(N\) is normal in \(G\), each matrix \(\rho(g)\) also normalises \(A\). Therefore, we may apply \zcref{lem:positivenormalisation} to \(A = \rho(N)\), and define
		\[
		\tilde{\rho} \coloneq \pi \rho,
		\qquad
		\tilde{D} \coloneq \pi(D).
		\]
		Since \(\pi\) is a homomorphism when restricted to the normaliser of \(A\), we have that
		\[
		\tilde{D} \tilde{\rho}(g) = \pi(D \rho(g)) =  \pi(\rho(\varphi(g)) D) = \tilde{\rho}(\varphi(g)) \tilde{D},
		\qquad
		\tilde{\rho}(g)\tilde{D}^k=\pi(\rho(g)D^k).
		\]
		In a common eigenbasis of \(A\), the matrices \(\pi(a) = a_+^{-1}a\) are diagonal with eigenvalues of modulus one, hence \(\tilde{\rho}(N) = \pi(A)\) is bounded. The group \(\tilde{\rho}(G)\) is a finite union of translates of \(\tilde{\rho}(N)\), so it is bounded as well.
		
		It remains to check the determinants. Fix \(g \in G\) and \(k \geq 1\). For every \(a\in A\), choose an \(x \in N\) such that \(a = \rho(x)\). For every \(t \in \ZZ\), normality gives us that \(x^t g N = g N\), and so the hypothesis gives
		\[
		\abs{\det(I_n-a^t \rho(g) D^k)} = \abs{\det(I_n - \rho(g) D^k)}.
		\]
		We now invoke \zcref{lem:positiverescaling} to find that
		\begin{equation}
			\label{eqn:elemposparts1}
			\det(I_n - a_+^t \rho(g) D^k) = \det(I_n - \rho(g) D^k)
		\end{equation}
		for every \(t \in \RR\). The same holds when we replace \(a_+^t\) with any \(u \in \mathfrak{A}\), where \(\mathfrak{A}\) is defined as in \zcref{lem:positivenormalisation}. Indeed, if \(u = \prod_{i=1}^s (a_i)_+^{t_i}\) with \(a_i \in A\) and \(t_i \in \QQ\), we can write \(t_i = n_i/d\) with \(d\) a common positive denominator. Simultaneous diagonalisation then gives
		\[
		u = \left(\prod_{i=1}^s a_i^{n_i}\right)_+^{1/d},
		\]
		so \zcref[nocap,abbrev]{eqn:elemposparts1} applies. For arbitrary real \(t_i\), we use that
		the continuous function
		\[ \RR^s \to \RR \colon (t_1,\ldots,t_s) \mapsto \det\left( I_n - \prod_{i=1}^s (a_i)_+^{t_i} \rho(g) D^k \right)\]
		equals the constant \(\det(I_n - \rho(g) D^k)\) on \(\QQ^s\), which is dense, and so it equals this value on all of \(\RR^s\). In other words, we have now extended \zcref[nocap,abbrev]{eqn:elemposparts1} to
		\begin{equation}
			\label{eqn:elemposparts2}
			\det(I_n - u \rho(g) D^k) = \det(I_n - \rho(g) D^k)
		\end{equation}
		for every \(u \in \mathfrak{A}\).
		
		Finally, \zcref{lem:positivenormalisation} gives us that \(\tilde{\rho}(g) \tilde{D}^k = \pi(\rho(g) D^k) = u \rho(g) D^k\) for some \(u \in \mathfrak{A}\). Together with \zcref[nocap,abbrev]{eqn:elemposparts2} this implies that
		\[
		\det(I_n-\tilde{\rho}(g)\tilde{D}^k) = \det(I_n - u\rho(g) D^k) = \det(I_n - \rho(g) D^k). \qedhere
		\]
	\end{proof}
	
	\subsection{Signed determinant averages}
	
	The next \zcref[noref,nocap]{lem:absvaluetoconstants} lets us get rid of the absolute value around each determinant by extracting its sign as a factor of the form \(\delta\epsilon^k\chi(g)\). This will reduce the absolute determinant averages in \zcref{prop:generalaverages} to the signed averages in \zcref{lem:commutingsignedaverages}.
	
	\begin{lemma}
		\label{lem:absvaluetoconstants}
		Let \(G\) be a group, let \(\varphi \in \End(G)\), let \(\rho \colon G \to \GL_n(\RR)\) be a homomorphism such that \(\rho(G)\) is bounded, and let \(D \in \RR^{n\times n}\) such that \(D \rho(g) = \rho(\varphi(g)) D\) for every \(g \in G\). There exist constants \(\delta, \epsilon \in \{-1,1\}\) and a homomorphism \(\chi\colon G \to \{-1,1\}\) such that \(\chi \varphi = \chi\) and
		\begin{equation}
			\label{eqn:absdetremoval}
			\abs{\det(I_n - \rho(g)D^k)} = \delta \epsilon^k \chi(g) \det(I_n - \rho(g)D^k)
		\end{equation}
		for every \(g \in G\) and every \(k \geq 1\).
	\end{lemma}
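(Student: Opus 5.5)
The plan is to split \(\RR^n\) according to the moduli of the eigenvalues of \(D\), and to show that on each piece the eigenvalues of \(\rho(g)D^k\) have the same modulus as those of \(D^k\). The sign of \(\det(I_n-\rho(g)D^k)\) can then be read off piece by piece.

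\textbf{Step 1: an invariant filtration.} For \(r\ge 0\), let \(V_{\le r}\) (resp.\ \(V_{<r}\)) be the sum of the real generalised eigenspaces of \(D\) whose eigenvalues have modulus \(\le r\) (resp.\ \(<r\)). Equivalently, \(V_{\le r}\) consists of the vectors \(v\) with \(\limsup_k \|D^kv\|^{1/k}\le r\). From \(D^k\rho(g)=\rho(\varphi^k(g))D^k\) and the uniform bound \(\|\rho(h)\|\le C\) for all \(h\in G\), we get
\[
\|D^k\rho(g)v\|\le C\|D^kv\|.
\]
Hence every \(V_{\le r}\) and \(V_{<r}\) is \(\rho(G)\)-invariant, and these spaces are clearly \(D\)-invariant. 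Refine the finitely many distinct moduli \(0\le r_1<\dots<r_s\) into a flag and choose a compatible basis. Then \(D\) and every \(\rho(g)\) are block upper triangular, and the diagonal blocks satisfy \(D_i\rho_i(g)=\rho_i(\varphi(g))D_i\). Moreover \(\det(I_n-\rho(g)D^k)=\prod_i\det(I-\rho_i(g)D_i^k)\). Each \(\rho_i(G)\) is again bounded, so \(\abs{\det\rho_i(g)}=1\).

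\textbf{Step 2: eigenvalue moduli on a block.} This is the main obstacle. Fix a block \(i\) on which all eigenvalues of \(D_i\) have modulus \(r_i\), and put \(X\coloneq\rho_i(g)D_i^k\). Then
\[
X^m=\rho_i(g)\,\rho_i(\varphi^k g)\cdots\rho_i(\varphi^{k(m-1)}g)\,D_i^{km},
\]
so \(\|X^m\|\le C\|D_i^{km}\|\). Taking \(m\)-th roots gives spectral radius of \(X\) at most \(r_i^k\). On the other hand \(\abs{\det X}=\abs{\det D_i}^k=r_i^{kd_i}\), where \(d_i\) is the size of the block. Together these force every eigenvalue of \(X\) to have modulus exactly \(r_i^k\). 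The case \(r_i=0\) is harmless: then \(X\) is nilpotent and \(\det(I-X)=1\).

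\textbf{Step 3: signs.} Group the blocks by the modulus \(r\) of their eigenvalues.
\begin{itemize}
\item If \(r<1\), every real eigenvalue of \(X\) is \(<1\) and complex pairs contribute positive factors, so \(\det(I-X)>0\).
\item If \(r=1\), each factor \(1-\lambda\) comes either from a complex pair (contributing \(\abs{1-\lambda}^2\ge 0\)) or from \(\lambda=\pm1\) (contributing \(0\) or \(2\)). Hence \(\det(I-X)\ge 0\), and in the zero case any sign in \eqref{eqn:absdetremoval} is correct.
\item If \(r>1\), write \(\det(I-X)=\det(-X)\det(I-X^{-1})\). The eigenvalues of \(X^{-1}\) have modulus \(<1\), so by the first case \(\det(I-X^{-1})>0\). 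Hence the sign of \(\det(I-X)\) is \((-1)^{d_i}\operatorname{sgn}\det\rho_i(g)\,(\operatorname{sgn}\det D_i)^k\).
\end{itemize}
Let \(\rho_>\) and \(D_>\) denote the induced action on \(V/V_{\le 1}\), of dimension \(d_>\). We set
\[
\delta\coloneq(-1)^{d_>},\qquad \epsilon\coloneq\operatorname{sgn}\det D_>,\qquad \chi(g)\coloneq\operatorname{sgn}\det\rho_>(g).
\]
Then \(\chi\) is a homomorphism because \(\rho_>\) is. Since \(D_>\) is invertible, \(\rho_>(\varphi(g))=D_>\rho_>(g)D_>^{-1}\), so taking determinants gives \(\chi\varphi=\chi\). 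Multiplying the contributions of all blocks yields \eqref{eqn:absdetremoval} for every \(g\) and every \(k\ge1\).
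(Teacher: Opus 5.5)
Your proof is correct. It follows the paper's overall plan: split by the eigenvalue moduli of \(D\), get \(\rho(G)\)-invariance from boundedness and \(D^k\rho(g)=\rho(\varphi^k(g))D^k\), locate the eigenvalues of \(\rho(g)D^k\) relative to the unit circle, and extract the sign via \(I-X=-X(I-X^{-1})\) with \(\chi=\det\rho_{>1}\). The key middle step, however, is done differently.

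The paper uses only the two-piece split, \(V_{\le1}\) versus the rest, characterised by polynomial growth. It then needs two separate estimates. The first is an upper bound on the non-expanding block. The second is a lower bound on the expanding block, which uses that \(\rho_{>1}(g_r)^{-1}=\rho_{>1}(g_r^{-1})\) is uniformly bounded and that \(\|D_{>1}^{kr}v\|\) grows exponentially.

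You refine to a flag indexed by every modulus \(r_i\), so you only need the cheap upper bound: the spectral radius of \(X\) is at most \(r_i^k\). The matching lower bound comes for free from \(\abs{\det X}=\abs{\det D_i}^k=r_i^{kd_i}\), using \(\abs{\det\rho_i(g)}=1\). This gives the sharper fact that on each graded piece all eigenvalues of \(\rho(g)D^k\) have modulus exactly \(r_i^k\). The price is the finer filtration, which this squeeze genuinely requires, since it breaks down as soon as a block mixes different moduli.

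Your exponential-rate description of \(V_{\le r}\) is the analogue of the paper's polynomial-growth characterisation. It rests on the same Jordan-block lower bound, which you leave implicit but which is standard.

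For the signs, the paper sets \(\delta=(-1)^{p+q}\) and \(\epsilon=(-1)^q\). It quotes a sign lemma of Dekimpe and Dugardein for \(\det(I-D_{>1}^k)\) and then compares \(\det(I-\rho_{>1}(g)D_{>1}^k)\) with it. You read the sign directly off \(\det(-X)\), which is self-contained and gives the intrinsic constants \(\delta=(-1)^{d_>}\) and \(\epsilon=\operatorname{sgn}\det D_>\). These agree with the paper's constants, because the non-real eigenvalues of \(D_>\) come in conjugate pairs.
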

	\begin{proof}
		We follow the decomposition and sign arguments from \cite[Sec.~3]{dd15-a}, replacing the use of finiteness by boundedness. The proof is divided into three steps.
		
		\medskip
		
		\noindent\textbf{Step 1.} First, we will decompose \(\rho(g)\) and \(D\) into two blocks. Let \(V_{\leq 1}\) and \(V_{>1}\) be the real subspaces corresponding to the generalised eigenspaces of \(D\) for eigenvalues of modulus \(\leq 1\) and \(>1\), respectively. We start by showing that \(v \in V_{\leq 1}\) if and only if \(\|D^r v\|\) is bounded by a polynomial in \(r\). We use the Euclidean norm and its induced matrix norm. Let \(J\) be the Jordan normal form of \(D\) and let \(P \in \GL_n(\CC)\) such that \(D = PJP^{-1}\). Set \(w \coloneq P^{-1}v\).
		
		A Jordan block \(J_\lambda\) of size \(m\) is of the form \(\lambda I_m + U\) with \(U\) an upper triangular matrix with zeroes on its diagonal. Hence \(U^m = 0\), and the binomial theorem gives
		\[ J_\lambda^r = \sum_{j=0}^{\min\{r,m-1\}} \binom{r}{j} \lambda^{r-j} U^j \]
		for every \(r \geq 0\).
		If \(\abs{\lambda} \leq 1\), then \(\abs{\lambda^{r-j}} \leq 1\) for every term in this sum, and each coefficient \(\binom{r}{j}\) is a polynomial in \(r\). Using the triangle inequality, we find that
		\[
		\|J_\lambda^r\|
		\leq \sum_{j=0}^{\min\{r,m-1\}} \binom{r}{j} \abs{\lambda}^{r-j} \|U\|^j
		\leq \sum_{j=0}^{m-1} \frac{r^j}{j!} \|U\|^j
		\leq \left( \sum_{j=0}^{m-1} \frac{\|U\|^j}{j!} \right) (1+r)^{m-1},
		\]
		i.e.\@ the powers of this block are bounded by a polynomial \(C(1+r)^{m-1}\) with \(C\) and \(m\) constants. Let \(J_{\lambda_1}, \ldots, J_{\lambda_s}\) be the Jordan blocks with eigenvalues of modulus \(\leq 1\), and let \(w_i\) be the subvectors of \(w\) corresponding to these blocks. If \(v \in V_{\leq 1}\), then
		\begin{align*}
			\|D^r v\|
			&= \| P J^r w \|\\
			&\leq \|P\| \sum_{i=1}^s \| J_{\lambda_i}^r w_i\| \\
			&\leq \|P\| \sum_{i=1}^s C_i \|w_i\| (1+r)^{m_i - 1},
		\end{align*}
		for fixed \(C_i\), \(m_i\) and \(w_i\), so \(\|D^r v\|\) is indeed bounded by a polynomial in \(r\).
		
		Conversely, suppose \(v \notin V_{\leq 1}\). Then \(w\) has a non-zero subvector \(w' = (w_1, \ldots, w_m)\) in a Jordan block \(J_\lambda = \lambda I_m + U\) with \(\abs{\lambda} > 1\). Let \(s\) be the largest index such that \(w_s \neq 0\) and let \(t \in \NN_0\). Since \(U\) has ones on its superdiagonal, the \(s\)-th coordinate of \(U^t w'\) is either \(w_{s+t}\) or \(0\), depending on whether \(s + t \leq m\). By the choice of \(s\), this coordinate is \(0\) for every \(t \geq 1\), and hence the binomial expansion gives
		\((J_\lambda^r w')_s = \lambda^r w_s\). Therefore,
		\[
		\|P^{-1}\| \| D^r v\|
		\geq \| P^{-1} D^r v\|
		= \| J^r w \|
		\geq \| J_\lambda^r w' \|
		\geq \abs{\lambda}^r \abs{w_s},
		\]
		so \(\| D^r v\| \geq C \abs{\lambda}^r\) for some constant \(C > 0\). Hence, \(\|D^r v\|\) cannot be bounded above by a polynomial in \(r\).
		
		Now choose \(B > 0\) such that \(\|\rho(h)\| \leq B\) for every \(h \in G\). Fix an element \(g \in G\) and a vector \(v \in V_{\leq 1}\). By induction, the matrix relation gives us that \(D^r\rho(g) = \rho(\varphi^r(g))D^r\), and therefore
		\[
		\|D^r\rho(g)v\|
		= \|\rho(\varphi^r(g))D^r v\|
		\leq B\|D^r v\|.
		\]
		As the right-hand side is bounded by a polynomial in \(r\), we get that \(\rho(g)v \in V_{\leq 1}\). Thus, \(V_{\leq 1}\) is \(\rho(G)\)-invariant. With respect to a basis adapted to \(\RR^n = V_{\leq 1} \oplus V_{>1}\), we obtain the decomposition
		\[
			\label{eqn:triangularbasis}
			\rho(g) = \begin{pmatrix}
				\rho_{\leq 1}(g) & \ast \\
				0 & \rho_{>1}(g)
			\end{pmatrix},
			\qquad
			D = \begin{pmatrix}
				D_{\leq 1} & 0 \\
				0 & D_{>1}
			\end{pmatrix}.
		\]
		Both \(\rho_{\leq 1}\) and \(\rho_{>1}\) are homomorphisms, and both their images are bounded. The eigenvalues of \(D_{\leq 1}\) and \(D_{>1}\) have modulus \(\leq 1\) and \(>1\), respectively. We will refer to the ``\(\leq 1\)''- and ``\(>1\)''-subspaces and submatrices as the \emph{non-expanding} and \emph{expanding} parts, respectively.
		
		We now remove the \(\ast\)-block in \(\rho(g)\) by defining
		\[
		\tilde{\rho}(g) \coloneq \begin{pmatrix}
			\rho_{\leq 1}(g) & 0 \\
			0 & \rho_{>1}(g)
		\end{pmatrix}.
		\]
		Multiplying block upper triangular matrices shows that \(\tilde{\rho}\) is a homomorphism with bounded image, and that
		\[
		D\tilde{\rho}(g) = \tilde{\rho}(\varphi(g))D,
		\qquad
		\det(I_n-\rho(g)D^k) = \det(I_n-\tilde{\rho}(g)D^k)
		\]
		for every \(g \in G\) and every \(k \geq 1\). Thus, replacing \(\rho\) by \(\tilde{\rho}\), we may assume that both \(\rho(g)\) and \(D\) are block diagonal.
		
		\medskip
		\noindent\textbf{Step 2.} In this second step, we study the eigenvalues of \(\rho(g)D^k\) corresponding to expanding and non-expanding parts. Fix \(g \in G\) and \(k \geq 1\). For every \(r \geq 1\), we set \(g_r \coloneq g\varphi^k(g)\cdots\varphi^{(r-1)k}(g)\), such that
		\[ (\rho(g)D^k)^r = \rho(g_r)D^{kr}. \]
		On the non-expanding part, we get that
		\[
		\| (\rho_{\leq 1}(g) D_{\leq 1}^k)^r \|
		= \|\rho_{\leq 1}(g_r) D_{\leq 1}^{kr} \|
		\leq \|\rho_{\leq 1}(g_r)\| \|D_{\leq 1}^{kr}\|,
		\]
		where on the right-hand side the first factor is bounded by a constant and the second factor is bounded by a polynomial in \(r\). Taking a non-zero complex eigenvector with eigenvalue \(\lambda\), we find that \(\abs{\lambda}^r\) is bounded by a polynomial in \(r\). This is only possible if \(\lambda\), and by extension every eigenvalue of \(\rho_{\leq 1}(g)D_{\leq 1}^k\), has modulus at most one.
		
		Conversely, on the expanding part, for every non-zero complex vector \(v\), we get that
		\[
		\|(\rho_{>1}(g)D_{>1}^k)^r v\|
		= \|\rho_{>1}(g_r)D_{>1}^{kr}v\|
		\geq \frac{\|D_{>1}^{kr}v\|}{\|\rho_{>1}(g_r)^{-1}\|}.
		\]
		The denominator is bounded by a constant, since \(\rho_{>1}(g_r)^{-1} = \rho_{>1}(g_r^{-1})\), and we know from the previous step that the numerator grows exponentially in \(r\). By taking \(v\) to be an eigenvector, we conclude that every eigenvalue of \(\rho_{>1}(g)D_{>1}^k\) has modulus greater than one.
		
		\medskip
		
		\noindent\textbf{Step 3.} In the third and final step, we get rid of the absolute value around the determinant. We define \(\chi\) as the homomorphism
		\[ \chi \colon G \to \{-1,1\} \colon g \mapsto \det \rho_{>1}(g). \]
		To see that this is well-defined, fix \(g \in G\). For every \(r \in \ZZ\), we have
		\[\abs{\chi(g)}^r = \abs{\det\rho_{>1}(g)}^r = \abs{\det\rho_{>1}(g)^r} = \abs{\det\rho_{>1}(g^r)}, \] 
		and the right-hand side is bounded. So \(\abs{\chi(g)} = 1\), and since we are dealing with real matrices, the determinant is real and hence \(\chi(g) \in \{-1,1\}\). Taking the determinant on both sides of the equality
		\[ D_{>1} \rho_{>1}(g) = \rho_{>1}(\varphi(g)) D_{>1} \]
		and dividing both sides by \(\det D_{>1} \neq 0\), we get that \(\chi \varphi = \chi\).
		
		Let \(p\) and \(q\) be the numbers of real eigenvalues of \(D\) that are greater than \(1\) and smaller than \(-1\), respectively, counted with algebraic multiplicity. We set
		\[
		\delta \coloneq (-1)^{p+q},
		\qquad
		\epsilon \coloneq (-1)^q,
		\]
		such that
		\[ \delta \epsilon^k = \begin{cases}
			(-1)^p & \text{ if \(k\) is odd},\\
			(-1)^{p+q} & \text{ if \(k\) is even}.
		\end{cases} \]
		Then, applying \cite[Lem.~4.4]{dd15-a} to \(D_{>1}\), and using that every eigenvalue of \(D_{>1}^k\) has modulus greater than one, gives us that
		\[
		\delta \epsilon^k \det(I-D_{>1}^k) > 0.
		\]
		Every eigenvalue of \((\rho_{>1}(g)D_{>1}^k)^{-1}\) has modulus less than one, and the same holds for \(D_{>1}^{-k}\), hence we find that
		\[
		\det(I-(\rho_{>1}(g)D_{>1}^k)^{-1}) > 0,
		\qquad
		\det(I-D_{>1}^{-k}) > 0.
		\]
		Using the identity \(I-M = -M(I-M^{-1})\) for an invertible matrix \(M\), we obtain
		\[
		\frac{\det(I-\rho_{>1}(g)D_{>1}^k)}{\det(I-D_{>1}^k)}
		= \chi(g)\frac{\det(I-(\rho_{>1}(g)D_{>1}^k)^{-1})}{\det(I-D_{>1}^{-k})},
		\]
		hence
		\[
		\chi(g) \det(I-D_{>1}^k) \det(I-\rho_{>1}(g)D_{>1}^k) > 0.
		\]
		Combining this with \(\delta\epsilon^k\det(I-D_{>1}^k) > 0\) gives
		\[
		\delta \epsilon^k \chi(g) \det(I-\rho_{>1}(g)D_{>1}^k) > 0.
		\]
		For the non-expanding part we instead find that
		\[
		\det(I - \rho_{\leq 1}(g)D_{\leq 1}^k) \geq 0.
		\]
		Once again combining the previous two inequalities and using the block decomposition from the first step gives
		\[
		\delta\epsilon^k \chi(g) \det(I_n-\rho(g)D^k)
		= \delta\epsilon^k \chi(g) \det(I-\rho_{>1}(g)D_{>1}^k) \det(I - \rho_{\leq 1}(g)D_{\leq 1}^k)
		\geq 0,
		\]
		which proves \zcref[nocap,abbrev]{eqn:absdetremoval}.
	\end{proof}
	
	Since the reduction to bounded representations doesn't change the determinants, the sign formula obtained above will also hold for the integer matrices obtained by applying \zcref{lem:matrixreduction}. Therefore, we can use the resulting homomorphism \(\chi\) in the lemma below.
	
	\begin{lemma}
		\label{lem:commutingsignedaverages}
		Let \(G\) be a group, let \(N\) be a finite index, normal subgroup of \(G\), let \(\varphi \in \End(G)\) such that \(\varphi(N) \leq N\), and let \(\rho \colon G \to \GL_n(\ZZ)\) be a homomorphism such that \(\rho(N)\) is simultaneously diagonalisable over \(\CC\). Suppose that \(D \in \ZZ^{n \times n}\) satisfies \(D\rho(g) = \rho(\varphi(g))D\) for every \(g \in G\), and let \(\chi \colon G \to \{-1,1\}\) be a homomorphism such that \(\chi\varphi = \chi\). If \(\chi(g)\det(I_n - \rho(g)D^k)\) depends only on the coset \(gN\), for every \(k \geq 1\), then there exist integer matrices \(M_0,\ldots,M_n\) such that
		\[
		\frac{1}{\ind{G}{N}}\sum_{gN \in G/N}\chi(g)\det(I_n - \rho(g)D^k)
		= \sum_{j=0}^n (-1)^j\tr(M_j^k)
		\]
		for every \(k \geq 1\).
	\end{lemma}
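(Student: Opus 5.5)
We prove the identity for each fixed \(k \geq 1\). The plan is to expand the determinant into traces of compound matrices and throw away everything that is not invariant under \(\rho(N)\). What remains is a representation of the finite group \(G/N\), to which \zcref{lem:traceavgs} applies.

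\textbf{Step 1: expand into traces.} By \zcref{lem:detalttracesum} and multiplicativity of compound matrices,
\[
\chi(g)\det(I_n-\rho(g)D^k)=\sum_{r=0}^n(-1)^r\tr\bigl(\rho_r(g)D_r^k\bigr),
\qquad \rho_r(g)\coloneq\chi(g)\comp{r}{\rho(g)},\quad D_r\coloneq\comp{r}{D}.
\]
Here \(\rho_r\colon G\to\GL_{\binom nr}(\ZZ)\) is a homomorphism and \(D_r\) is an integer matrix. Because \(\chi\varphi=\chi\), the relation \(D_r\rho_r(g)=\rho_r(\varphi(g))D_r\) still holds. Since \(\rho(N)\) is simultaneously diagonalisable, so is \(\comp{r}{\rho(N)}\). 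Hence \(\rho_r(N)\) is simultaneously diagonalisable as well, as \(\chi\) is a scalar.

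\textbf{Step 2: isolate the trivial character.} Fix \(g\). For \(x\in N\) we decompose \(\CC^{\binom nr}\) into the common eigenspaces \(E_{r,\mu}\) of \(\rho_r(N)\), indexed by characters \(\mu\colon N\to\CC^\ast\). This gives
\[
\chi(gx)\det(I_n-\rho(gx)D^k)=\sum_{r}(-1)^r\sum_{\mu}\mu(x)\,\tr\bigl(\pi_{r,\mu}\rho_r(g)D_r^k\bigr),
\]
where \(\pi_{r,\mu}\) is the projection onto \(E_{r,\mu}\) along the other eigenspaces. By hypothesis the left-hand side is constant in \(x\in N\). Grouping equal characters across different \(r\) and using Dedekind--Artin linear independence of distinct characters of \(N\), the constant equals the coefficient of the trivial character. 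Thus
\[
\chi(g)\det(I_n-\rho(g)D^k)=\sum_r(-1)^r\tr\bigl(P_r\rho_r(g)D_r^k\bigr),
\]
with \(P_r\) the projection onto the fixed space \(V_r\coloneq\Fix(\rho_r(N))\), which is a \(\QQ\)-subspace.

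\textbf{Step 3: invariance of \(V_r\) and the finite-group average.} Normality of \(N\) makes \(V_r\) invariant under \(\rho_r(G)\). Invariance under \(D_r\) is the main obstacle. From the relation we only get that \(D_r V_r\) is fixed by \(\rho_r(\varphi(N))\), not by all of \(\rho_r(N)\). To fix this, I would first run Step 1 of \zcref{lem:matrixreduction} on \((\rho,D)\). This replaces \(D\) by an invertible integer matrix without changing any \(\det(I-\rho(g)D^k)\), and it preserves simultaneous diagonalisability of \(\rho(N)\) on the quotient. With \(D_r\) invertible, \zcref{lem:imginvariance}, applied to \(N\), \(\restr{\varphi}{N}\) and \(\rho_r\), gives \(\rho_r(\varphi(N))=\rho_r(N)\). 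Hence \(D_rV_r\subseteq V_r\).

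Using \zcref{lem:integerbase}, we choose a \(\ZZ\)-basis adapted to \(V_r\). Let \(\sigma_r\) and \(E_r\) be the integer matrices describing the restrictions of \(\rho_r\) and \(D_r\) to \(V_r\). On \(V_r\) the action of \(N\) is trivial, so \(\sigma_r\) factors through the finite group \(G/N\), and \(\varphi\) induces an endomorphism \(\bar\varphi\) of \(G/N\) with \(E_r\sigma_r(gN)=\sigma_r(\bar\varphi(gN))E_r\). Moreover \(\tr(P_r\rho_r(g)D_r^k)=\tr(\sigma_r(gN)E_r^k)\). Applying \zcref{lem:traceavgs} to \(G/N\), \(\bar\varphi\), \(\sigma_r\) and \(E_r\) yields integer matrices \(M_r\) such that the average over \(G/N\) equals \(\tr(M_r^k)\) for every \(k\). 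Summing over \(r\) with signs \((-1)^r\) gives the claimed formula.
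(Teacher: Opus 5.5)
Your proof is correct, but it differs from the paper's at two points. Both arguments nevertheless use the same operator: the projection $P_r$ onto $\Fix(\rho_r(N))$ along $\bigoplus_{\mu\neq 1}E_{r,\mu}$. The paper's $P_j=\prod_i p(Y_j(a_i))$ is exactly this projection, and its $W_j\otimes\CC$ is exactly that complement.

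The first difference is how the constant value is identified with $\sum_r(-1)^r\tr(P_r\rho_r(g)D_r^k)$. You use linear independence of characters of $N$. The paper instead builds $P_j$ explicitly as a rational affine combination $\sum_t c_tY_j(b_t)$ with $b_t\in N$ and $\sum_t c_t=1$, so the coset hypothesis enters by direct substitution. Your version is more conceptual and avoids constructing the polynomial $p$.

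The second difference is more substantive. You keep the \emph{subspace} of $N$-invariants, which need not be $\comp{r}{D}$-invariant when $D$ is singular. You rightly flag this and repair it with Step~1 of \zcref{lem:matrixreduction} followed by \zcref{lem:imginvariance}. The paper instead passes to the \emph{quotient} by the coinvariant subspace $W_j=\sum_{a\in N}(I-Y_j(a))V_j$. That subspace is automatically $\comp{j}{D}$-invariant, because $\comp{j}{D}(I-Y_j(a))=(I-Y_j(\varphi(a)))\comp{j}{D}$ and $\varphi(N)\leq N$. So the paper's argument works for arbitrary $D$ with no extra lemmas, whereas yours imports two. This costs nothing in the paper's only application, the proof of \zcref{prop:generalaverages}, where $D$ is already invertible. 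You could also avoid the detour by taking the trace on the quotient by $\bigoplus_{\mu\neq1}E_{r,\mu}$, which is precisely the paper's route.

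Two minor presentational points. First, the reduction to invertible $D$ has to be done before your Steps~1--2, not inside Step~3. This is legitimate because it preserves integrality, the intertwining relation, simultaneous diagonalisability of $\rho(N)$ and all determinants. You then pad with $M_j=0$ for $m<j\leq n$. Second, the expansion you wrote in Step~2 is really that of $xg$ rather than $gx$. This is harmless, since $xg\in gN$ as well and $P_r$ commutes with $\rho_r(g)$.
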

	\begin{proof}
		For every \(j \in \{0,\ldots,n\}\), set \(d_j \coloneq \binom{n}{j}\) and
		\[
		V_j \coloneq \QQ^{d_j},
		\qquad
		X_j \coloneq \comp{j}{D},
		\qquad
		Y_j(g) \coloneq \chi(g)\comp{j}{\rho(g)}.
		\]
		From the multiplicativity of compound matrices and from \(\chi\varphi=\chi\), we find that \(Y_j\) is a homomorphism and that \(X_j Y_j(g) = Y_j(\varphi(g)) X_j\). Applying compound matrices to a common diagonalisation of \(\rho(N)\) also shows that \(Y_j(N)\) is simultaneously diagonalisable.
		We define the \(\QQ\)-vector space
		\[
		W_j \coloneq \Span_{\QQ} \set{ (I_{d_j} - Y_j(a))v }{ a \in N,\  v \in V_j}
		\]
		for every \(j\). Using the identities below,
		\begin{align*}
			X_j(I_{d_j}-Y_j(a)) &= (I_{d_j}-Y_j(\varphi(a)))X_j,\\
			Y_j(g)(I_{d_j}-Y_j(a)) &=(I_{d_j}-Y_j(gag^{-1}))Y_j(g),
		\end{align*}
		we find that \(W_j\) is invariant under \(X_j\) and \(Y_j(G)\). Let \(\bar{X}_j\) and \(\bar{Y}_j(g)\) denote the induced maps on \(V_j/W_j\), and in particular \(\bar{Y}_j(a) = I\) for \(a \in N\).
		
		Now, for each \(j\) choose a basis of \(W_j\) of the form 
		\[ \set{ (I_{d_j} - Y_j(a_{i,j}))v_i }{ i = 1, \ldots, s_j },\]
		and to keep things simple, let \(a_1, \ldots, a_s\) enumerate all of the \(a_{i,j}\). Then
		\[ W_j=\sum_{i=1}^s(I-Y_j(a_i))V_j\]
		for every \(0 \leq j \leq n\).
		
		We take the product of the characteristic polynomials of all \(Y_j(a_i)\), remove all factors \(x-1\), and finally divide by the resulting polynomial's value at \(1\). This gives a polynomial \(p \in \QQ[x]\) such that \(p(1) = 1\), and such that every other eigenvalue of a matrix \(Y_j(a_i)\) is a root of this polynomial. Set
		\[ P_j \coloneq \prod_{i=1}^s p(Y_j(a_i)). \]
		We established before that \(Y_j(N)\) is simultaneously diagonalisable, so we may work in a common eigenbasis. For an eigenvector \(v\) of \(Y_j(a_i)\) with eigenvalue \(\lambda\), we find
		\[
		p(Y_j(a_i))(I-Y_j(a_i))v = p(\lambda)(1-\lambda)v = 0,
		\]
		since by construction \(p(\lambda) = 0\) whenever \(\lambda \neq 1\). Thus \(p(Y_j(a_i))(I-Y_j(a_i)) = 0\).
		Both factors here are polynomials in \(Y_j(a_i)\), so they commute. Then
		\[
		P_j(I - Y_j(a_i)) = \left( \prod_{t \neq i} p(Y_j(a_t)) \right) p(Y_j(a_i))(I-Y_j(a_i)) = 0,
		\]
		so \(P_jW_j = 0\). Since \(\bar{Y}_j(a_i)=I\) and \(p(1)=1\), the map induced by \(P_j\) on \(V_j/W_j\) is the identity. Expanding \(P_j\) and using multiplicativity of the \(Y_j\), we find elements \(b_1, \ldots, b_r \in N\) (each a product \(\prod_i a_i^{e_i}\)) and constants \(c_1, \ldots, c_r \in \QQ\) such that 
		\[
		P_j=\sum_{t=1}^r c_t Y_j(b_t),
		\qquad \sum_{t=1}^r c_t = p(1)^s =1,
		\]
		for every \(j\).
		Fix \(g\in G\) and \(k\geq1\). In a basis of \(V_j\) extending a basis of \(W_j\), we have that
		\[
		P_j = \begin{pmatrix}
			0 & \ast \\
			0 & I
		\end{pmatrix},
		\qquad
		Y_j(g)X_j^k = \begin{pmatrix}
			\ast & \ast \\
			0 & \bar{Y}_j(g)\bar{X}_j^k
		\end{pmatrix},
		\]
		so their product has trace \(\tr(\bar{Y}_j(g)\bar{X}_j^k)\). Since \(b_t gN = gN\), the hypothesis and \zcref{lem:detalttracesum} give
		\begin{align*}
			\chi(g)\det(I_n-\rho(g)D^k)
			&=\sum_{t=1}^r c_t \chi(b_t g)\det(I_n-\rho(b_t g)D^k)\\
			&=\sum_{j=0}^n(-1)^j\tr(P_jY_j(g)X_j^k)\\
			&=\sum_{j=0}^n(-1)^j\tr(\bar{Y}_j(g)\bar{X}_j^k).
		\end{align*}
		By \zcref{lem:integerbase}, the matrices \(\bar{X}_j\), \(\bar{Y}_j(g)\) and \(\bar{Y}_j(g^{-1})\) have integer entries in suitable quotient bases.
		
		Let \(\bar{\varphi}\) denote the induced endomorphism on \(G/N\). For each \(j\), we define
		\[
		\tilde{Y}_j \colon G/N \to \GL(V_j/W_j) \colon gN \mapsto \bar{Y}_j(g).
		\]
		This is well-defined, since for every \(a \in N\), we have that \(\bar Y_j(ga)=\bar Y_j(g)\bar Y_j(a)=\bar Y_j(g)\). Then
		\[ 
		\bar{X}_j \tilde{Y}_j(gN)
		= \bar{X}_j\bar{Y}_j(g)
		= \bar{Y}_j(\varphi(g)) \bar{X}_j
		= \tilde{Y}_j(\bar{\varphi}(gN)) \bar{X}_j
		\]
		for every \(j\) and every \(gN \in G/N\). Thus:
		\begin{align*}
			&\frac{1}{\ind{G}{N}} \sum_{gN\in G/N}\chi(g)\det(I_n-\rho(g)D^k) \\
			&\qquad = \frac{1}{\ind{G}{N}} \sum_{gN\in G/N} \sum_{j=0}^n (-1)^j \tr( \tilde{Y}_j(gN) \bar{X}_j^k )\\
			&\qquad=\sum_{j=0}^n(-1)^j \left( \frac{1}{\ind{G}{N}} \sum_{gN\in G/N} \tr(\tilde{Y}_j(gN)\bar{X}_j^k)\right).
		\end{align*}
		Applying \zcref{lem:traceavgs} now gives the required integer matrices \(M_0, \ldots, M_n\).
	\end{proof}
	
	\subsection{Proof and corollaries} We now prove \zcref{prop:generalaverages} by combining the matrix reductions with the sign and trace formulas.
	
	\begin{proof}[Proof of \zcref{prop:generalaverages}]
		By \zcref{lem:matrixreduction}, we may assume that \(D\) is invertible and \(\rho(N)\) is simultaneously diagonalisable over \(\CC\). Applying \zcref{lem:absvaluetoconstants} to the homomorphism \(\tilde{\rho}\) and matrix \(\tilde{D}\) obtained from \zcref{lem:boundedreduction}, and using that this preserves determinants, we find that there exist constants \(\delta,\epsilon \in \{-1,1\}\) and a homomorphism \(\chi \colon G \to \{-1,1\}\) such that \(\chi\varphi = \chi\) and
		\[
		\abs{\det(I_n - \rho(g)D^k)} = \delta\epsilon^k\chi(g)\det(I_n - \rho(g)D^k)
		\]
		for every \(g \in G\) and every \(k \geq 1\). This also shows that \(\chi(g)\det(I_n-\rho(g)D^k)\) depends only on \(gN\). We then invoke \zcref{lem:commutingsignedaverages} to obtain integer matrices \(M_0, \ldots, M_n\) such that
		\[ A_k = \delta\epsilon^k\sum_{j=0}^n (-1)^j\tr(M_j^k). \]
		Hence the \(A_k\) are integers, and by \zcref{cor:zetafuncoftraces} their associated zeta function is given by
		\[ Z_A(z) = \prod_{j=0}^n \det(I-\epsilon zM_j)^{\delta(-1)^{j+1}}, \]
		which exists near zero and is rational.
	\end{proof}
	
	While \(\tau\) is defined as the product of multiple absolute values of determinants, we can use block diagonal matrices to write it as the absolute value of a single determinant. This lets us apply \zcref{prop:generalaverages} to averages of \(\tau\).
	
	\begin{corollary}
		\label{cor:tauaverages}
		Let \(G\) be a group, let \(\varphi \in \End(G)\), and let \(N\) be a finite index, normal \NR-subgroup of \(G\) such that \(\varphi(N) \leq N\). For every \(k\geq1\), the quantity
		\[ A_k\coloneq\frac{1}{\ind{G}{N}}\sum_{gN\in G/N}\tau(\restr{\iota_g\varphi^k}{N}) \]
		is an integer and the sequence \((A_k)_{k \in \NN}\) has a rational associated zeta function.
	\end{corollary}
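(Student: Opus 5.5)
We reduce \zcref{cor:tauaverages} to \zcref{prop:generalaverages} by encoding all canonical factors of \(N\) into a single integer representation. Let \(\fact{0}{N}, \ldots, \fact{c}{N}\) be the canonical factors of \(N\), with \(\fact{i}{N} \cong \ZZ^{n_i}\), and set \(n \coloneq \sum_i n_i\). Since every term \(\filt{i}{N}\) is fully invariant in \(N\) and \(N\) is normal in \(G\), each \(\filt{i}{N}\) is normal in \(G\). Hence conjugation by \(g \in G\) induces an automorphism \(\rho_i(g) \in \GL_{n_i}(\ZZ)\) of \(\fact{i}{N}\). Similarly, \(\restr{\varphi}{N}\) induces an integer matrix \(D_i\) on \(\fact{i}{N}\). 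We then define
\[ \rho(g) \coloneq \diag(\rho_0(g), \ldots, \rho_c(g)) \in \GL_n(\ZZ), \qquad D \coloneq \diag(D_0, \ldots, D_c). \]
Then \(\rho\) is a homomorphism. The relation \(D\rho(g) = \rho(\varphi(g))D\) holds blockwise, because \(\varphi(gxg^{-1}) = \varphi(g)\varphi(x)\varphi(g)^{-1}\) for \(x \in N\).

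Next we identify the averaged quantity. The endomorphism induced by \(\restr{\iota_g\varphi^k}{N}\) on \(\fact{i}{N}\) is \(\rho_i(g)D_i^k\), so by \zcref{def:tau},
\[ \tau(\restr{\iota_g\varphi^k}{N}) = \prod_{i=0}^c \abs{\det(I_{n_i} - \rho_i(g)D_i^k)} = \abs{\det(I_n - \rho(g)D^k)}. \]
\zcref{prop:generalaverages} requires this quantity to depend only on \(gN\). This follows from \zcref{lem:invariances}(i) applied to the \NR-group \(N\): for \(a \in N\),
\[ \tau(\restr{\iota_{ga}\varphi^k}{N}) = \tau(\iota_a \restr{\iota_g\varphi^k}{N}) = \tau(\restr{\iota_g\varphi^k}{N}). \]

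It remains to check that \(\rho(N)\) is abelian.
\begin{itemize}
\item For \(i \geq 1\), the factor \(\fact{i}{N}\) is a factor of the adapted lower central series of \(\sqrt[N]{[N,N]}\). Elements of \(\sqrt[N]{[N,N]}\) act trivially on it, but a general \(a \in N\) need not, so the action of \(N\) on \(\fact{i}{N}\) is not obviously trivial.
\item Instead, \(\rho_i(N)\) is the image of \(N\) acting on \(\fact{i}{N}\) with \(\sqrt[N]{[N,N]}\) in the kernel. The kernel contains \([N,N]\), so \(\rho_i(N)\) is a quotient of \(N/[N,N]\), which is abelian.
\item For \(i = 0\), \(\rho_0(N)\) is trivial, since \(N/\filt{1}{N}\) is abelian.
\end{itemize}
Hence \(\rho(N)\), a subgroup of the product of the abelian groups \(\rho_i(N)\), is abelian.

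With \(\rho\), \(D\) and the coset-dependence verified, \zcref{prop:generalaverages} gives directly that \(A_k\) is an integer and that \((A_k)_{k \in \NN}\) has a rational associated zeta function. The main point to get right is the abelianness of \(\rho(N)\), which is handled by factoring through \(N/[N,N]\) as above; everything else is bookkeeping with block diagonal matrices.
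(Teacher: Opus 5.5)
Your proof is correct and follows the paper's argument essentially verbatim. It uses the same block-diagonal \(\rho\) and \(D\) built from the canonical factors of \(N\) and the same appeal to \zcref{lem:invariances}(i) for coset-dependence. The abelianness step is also the same: the paper phrases it as \(\rho(\sqrt[N]{[N,N]}) = 1\), which is your observation that every \(\rho_i\) factors through an abelian quotient of \(N\).

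One small slip: \(\iota_{ga} = \iota_g\iota_a\), not \(\iota_a\iota_g\). You can fix this by writing the elements of the coset as \(ag\) with \(a \in N\), which is allowed since \(gN = Ng\). Equivalently, write \(\iota_{ga} = \iota_{gag^{-1}}\iota_g\) and note that \(gag^{-1} \in N\).
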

	\begin{proof}
		Let \(\fact{0}{N},\ldots,\fact{c}{N}\) be the canonical factors of \(N\). Let \(D_i\) and \(\rho_i(g)\) be the integer matrices induced on the \(i\)-th factor by \(\varphi\) and \(\iota_g\), respectively. Set
		\[ D \coloneq \diag(D_0, \ldots, D_c),\qquad
		\rho(g) \coloneq \diag(\rho_0(g), \ldots, \rho_c(g)). \]
		By \zcref{def:tau} and the identity \(\varphi\iota_g=\iota_{\varphi(g)}\varphi\), we find that
		\[ \tau(\restr{\iota_g\varphi^k}{N}) = \abs{\det(I-\rho(g)D^k)},
		\qquad D\rho(g)=\rho(\varphi(g))D. \]
		Set \(M \coloneq \sqrt[N]{[N,N]}\). Conjugation by \(M\) is trivial on \(N/M\) and on the factors of the adapted lower central series of \(M\), hence \(\rho(M) = 1\), and \(\rho(N)\) is abelian. By \zcref{lem:invariances}(i), \(\abs{\det(I-\rho(g)D^k)}\) depends only on the coset \(gN\), and the result then follows from \zcref{prop:generalaverages}.
	\end{proof}
	
	\begin{corollary}
		\label{cor:Tisinteger}
		Let \(G\) be a virtually polycyclic group and let \(\varphi\in\End(G)\). Then \(\TR(\varphi)\) is an integer.
	\end{corollary}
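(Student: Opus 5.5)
The plan is to show that each summand of \zcref[nocap,abbrev]{eqn:TRformula} is an integer by applying \zcref{cor:tauaverages} once per fixed class. First fix a finite index, normal \NR-subgroup \(N\) of \(G\) with \(\varphi(N) \leq N\); one exists by \zcref{prop:fullinvsub}, and \(\TR(\varphi)\) does not depend on this choice. Since \(\Fix(P_\varphi)\) is finite by \zcref{prop:segalconjclasses}, \(\TR(\varphi)\) is a finite sum. It therefore suffices to prove that, for every \([t]_G \in \Fix(P_\varphi)\), the inner average
\[
\frac{1}{\ind{C_t}{D_t}} \sum_{cD_t \in C_t/D_t} \tau(\restr{\iota_c\varphi_t}{D_t})
\]
is an integer.

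For a fixed \(t\), I would apply \zcref{cor:tauaverages} with the group \(C_t\), the endomorphism \(\restr{\varphi_t}{C_t}\) and the subgroup \(D_t\). The hypotheses are checked as follows. The lemma preceding \zcref{thm:Rformula} shows that \(C_t\) and \(D_t\) are \(\varphi_t\)-invariant and that \(D_t\) is a finite index normal subgroup of \(C_t\). Moreover, \(D_t = C_N(t) \leq N\) is an \NR-subgroup by \zcref{lem:admissiblesubs}. Taking \(k = 1\) in \zcref{cor:tauaverages}, the quantity \(A_1\) is exactly the inner average above, so it is an integer.

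Summing these finitely many integers shows that \(\TR(\varphi)\) is an integer. Non-negativity was already noted. The only point needing care is that \zcref{cor:tauaverages} imposes no finiteness or virtually-polycyclic assumption on the ambient group beyond what is stated, so it applies verbatim to \(C_t\). I expect no real obstacle, since all the substantive work was done in \zcref{prop:generalaverages}.
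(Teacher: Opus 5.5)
Your proposal is correct and matches the paper's proof. The paper also fixes \(N\) and notes that each \(D_t = C_t \cap N\) is an \NR-subgroup by \zcref{lem:admissiblesubs}. It then applies \zcref{cor:tauaverages} with \(k=1\) to \(C_t\), \(\restr{\varphi_t}{C_t}\) and \(D_t\), so each of the finitely many outer summands is an integer.
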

	\begin{proof}
		Choose \(N\) as in \zcref{def:tamedReidNr}. For every \([t]_G\in\Fix(P_\varphi)\), the subgroup \(D_t=C_t\cap N\) is an \NR-group by \zcref{lem:admissiblesubs}. Applying \zcref{cor:tauaverages} with \(k=1\), we find that every outer summand in \zcref[nocap,abbrev]{eqn:TRformula} is an integer, hence so is their sum.
	\end{proof}

	\section{Rationality of the tamed Reidemeister zeta function}
	\label{sec:rationalityZeta}
	
	Armed with the determinant averages from the previous section, we are now ready to define and prove the rationality of the tamed Reidemeister zeta function. Following the approach of Fel'shtyn and Hill in \cite[Thm.~6]{fh94-a}, we group the conjugacy classes of torsion elements into periodic orbits of \(P_\varphi\), and show that the classes in each orbit all have the same summand in \zcref[nocap,abbrev]{eqn:TRformula}. This way, we show that \(\TR_\varphi(z)\) is a product, taken over the periodic orbits, of rational functions, and is therefore rational itself.
	
	\begin{definition}
		\label{def:tamedReidZeta}
		Let \(G\) be a virtually polycyclic group and let \(\varphi \in \End(G)\). The \emph{tamed Reidemeister zeta function} of \(\varphi\) is the zeta function
		\[
		\TR_\varphi(z)\coloneq\exp\sum_{k=1}^\infty\frac{\TR(\varphi^k)}{k}z^k.
		\]
		The formal power series in the exponent converges for \(z \in \CC\) with sufficiently small modulus, as established in \zcref{thm:tamedrationality} below.
	\end{definition}
	
	\subsection{Periodic orbits}
	
	\begin{lemma}
		\label{lem:nilptauswap}
		Let \(G, H\) be two \NR-groups, and let \(\varphi \colon G \to H\) and \( \psi \colon H \to G\) be two homomorphisms. Then \(\tau(\psi \varphi) = \tau(\varphi \psi)\).
	\end{lemma}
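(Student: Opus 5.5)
The plan is to show that the canonical filtration is functorial under arbitrary homomorphisms between \NR-groups. Once that is in place, the statement reduces factor by factor to Sylvester's determinant identity \(\det(I_m - AB) = \det(I_n - BA)\) for rectangular integer matrices \(A \in \ZZ^{m\times n}\) and \(B \in \ZZ^{n \times m}\).

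\textbf{Step 1: the filtration is preserved.} Let \(\varphi \colon G \to H\) be a homomorphism. I will check that \(\varphi(\filt{i}{G}) \leq \filt{i}{H}\) for every \(i\).
\begin{itemize}
\item For \(i = 0\) this is trivial.
\item For \(i = 1\), we have \(\varphi([G,G]) \leq [H,H]\). If \(g^k \in [G,G]\), then \(\varphi(g)^k \in [H,H]\), so \(\varphi(\sqrt[G]{[G,G]}) \leq \sqrt[H]{[H,H]}\).
\item For the remaining terms, write \(N_G \coloneq \sqrt[G]{[G,G]}\) and \(N_H \coloneq \sqrt[H]{[H,H]}\). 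By the case \(i=1\), \(\varphi\) restricts to a homomorphism \(N_G \to N_H\). It therefore maps \(\gamma_j(N_G)\) into \(\gamma_j(N_H)\), and by the same isolator argument it maps \(\sqrt[N_G]{\gamma_j(N_G)}\) into \(\sqrt[N_H]{\gamma_j(N_H)}\).
\end{itemize}
The nilpotency classes of \(N_G\) and \(N_H\) may differ. To handle this, extend both filtrations by trivial terms to a common length \(c\); the extra factors are \(0\)-dimensional and contribute determinant \(1\), so \(\tau\) is unchanged. Consequently \(\varphi\) induces homomorphisms \(\varphi_i \colon \fact{i}{G} \cong \ZZ^{n_i} \to \fact{i}{H} \cong \ZZ^{m_i}\), represented by integer matrices of size \(m_i \times n_i\). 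Likewise \(\psi\) induces \(\psi_i\) of size \(n_i \times m_i\).

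\textbf{Step 2: induced maps compose.} Since the induced maps on quotients are functorial, the endomorphism induced by \(\psi\varphi\) on \(\fact{i}{G}\) is \(\psi_i\varphi_i\). Similarly, the endomorphism induced by \(\varphi\psi\) on \(\fact{i}{H}\) is \(\varphi_i\psi_i\). By \zcref{def:tau},
\[
\tau(\psi\varphi) = \prod_{i=0}^c \abs{\det(I_{n_i} - \psi_i\varphi_i)}, \qquad \tau(\varphi\psi) = \prod_{i=0}^c \abs{\det(I_{m_i} - \varphi_i\psi_i)}.
\]

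\textbf{Step 3: compare factor by factor.} Sylvester's identity gives \(\det(I_{n_i} - \psi_i\varphi_i) = \det(I_{m_i} - \varphi_i\psi_i)\) for each \(i\). This includes the degenerate case where one of \(n_i, m_i\) is zero: then the composite is the zero map and both sides equal \(1\). Multiplying over \(i\) gives the result.

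\textbf{Expected obstacle.} The only real care is needed in Step 1, namely verifying that the isolators and the adapted lower central series are respected by homomorphisms between \emph{different} groups, not just by endomorphisms. The paper so far only records full invariance within one group. The isolator argument above handles this directly. It does not need the \NR-property itself; that property is only required so that \(\tau\) is defined.
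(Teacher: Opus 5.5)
Your proof is correct, but it takes a different route from the paper's.

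\textbf{The paper's route.} The paper never looks at the canonical filtrations. It observes that
\([g]_{\psi\varphi} \mapsto [\varphi(g)]_{\varphi\psi}\) and \([h]_{\varphi\psi} \mapsto [\psi(h)]_{\psi\varphi}\) are mutually inverse bijections, using \([x]_\theta = [\theta(x)]_\theta\). This gives \(R(\psi\varphi) = R(\varphi\psi)\) for arbitrary groups. It then transfers the equality to \(\tau\) via \(\tau = \abs{R}_0\) from \zcref{prop:NRtauequalsRwhenfinite}.

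\textbf{What the paper's route buys.} Given that earlier proposition, the argument is shorter. It also sidesteps exactly the point you flagged as the main obstacle, namely compatibility of the canonical filtrations under homomorphisms between different groups. On the other hand, it relies on the external result behind \zcref{prop:NRtauequalsRwhenfinite}. It also genuinely needs the \NR-property, since without it the determinant product need not agree with \(\abs{R}_0\) (the Klein bottle group gives counterexamples).

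\textbf{What your route buys.} Your argument is self-contained linear algebra: functoriality of the isolator and lower-central-series construction, plus Sylvester's identity. It proves the stronger, signed statement \(\det(I_{n_i}-\psi_i\varphi_i) = \det(I_{m_i}-\varphi_i\psi_i)\) on each canonical factor separately. It also works verbatim for the determinant product on arbitrary strongly torsion-free polycyclic groups, whether or not they are \NR.
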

	\begin{proof} 
		Since \(\varphi(\psi \varphi) = (\varphi \psi) \varphi\) and \(\psi(\varphi \psi) = (\psi \varphi) \psi\), we get induced maps
		\begin{align*}
			\Phi &\colon \R[\psi \varphi] \to \R[\varphi \psi] \colon [g]_{\psi \varphi} \mapsto [\varphi(g)]_{\varphi \psi},\\
			\Psi &\colon \R[\varphi \psi] \to \R[\psi \varphi] \colon [h]_{\varphi \psi} \mapsto [\psi(h)]_{\psi \varphi}.
		\end{align*}
		For any \(\theta \in \End(G)\) and any \(x \in G\), we have that \([x]_\theta = [\theta(x)]_\theta\). Thus, we find that
		\[ (\Psi\Phi)([g]_{\psi \varphi}) = [(\psi\varphi)(g)]_{\psi\varphi} = [g]_{\psi\varphi},\]
		and similarly \(\Phi\Psi\) is the identity on \(\R[\varphi\psi]\). Hence \(R(\psi \varphi) = R(\varphi\psi)\), and by \zcref{prop:NRtauequalsRwhenfinite} then \(\tau(\psi \varphi) = \tau(\varphi\psi)\) as well.
	\end{proof}
	
	\begin{lemma}
		\label{lem:tauaverageswapping}
		Let \(C,C'\) be groups and let \(D \normalsub C\) and \(D' \normalsub C'\) be \NR-subgroups of finite index. Let
		\[ \alpha \colon C \to C', \qquad \beta \colon C' \to C\]
		be homomorphisms such that \(\alpha(D) \leq D'\) and \(\beta(D') \leq D\). Then
		\begin{align*}
			&\frac{1}{\ind{C}{D}} \sum_{cD \in C/D} \tau( \restr{\iota_c\beta\alpha}{D})\\
			&\qquad = \frac{1}{\ind{C'}{D'}} \sum_{c'D' \in C'/D'} \tau( \restr{\iota_{c'}\alpha\beta}{D'}).
		\end{align*}
	\end{lemma}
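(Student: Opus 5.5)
The plan is to reduce the statement to a counting argument on the finite quotients \(Q \coloneq C/D\) and \(Q' \coloneq C'/D'\), with \zcref{lem:nilptauswap} supplying the link between the two sides. Since \(\alpha(D) \leq D'\) and \(\beta(D') \leq D\), we get induced maps \(\bar\alpha \colon Q \to Q'\) and \(\bar\beta \colon Q' \to Q\). Put \(\theta \coloneq \bar\beta\bar\alpha \in \End(Q)\) and \(\theta' \coloneq \bar\alpha\bar\beta \in \End(Q')\). Define
\[ g \colon Q \to \ZZ \colon cD \mapsto \tau(\restr{\iota_c\beta\alpha}{D}), \qquad f \colon Q' \to \ZZ \colon c'D' \mapsto \tau(\restr{\iota_{c'}\alpha\beta}{D'}). \]
These are well defined by \zcref{lem:invariances}(i), exactly as in \zcref{prop:Tinvariance1}(1). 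Note that \(\beta\alpha(D) \leq D\) and \(D\) is normal, so \(D\) is \(\iota_c\beta\alpha\)-invariant; the analogous statement holds for \(D'\).

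\textbf{Step 1 (relating \(g\) and \(f\)).} I would write \(\restr{\iota_c\beta\alpha}{D} = (\restr{\iota_c\beta}{D'})\circ(\restr{\alpha}{D})\). Here \(\restr{\alpha}{D} \colon D \to D'\) and \(\restr{\iota_c\beta}{D'} \colon D' \to D\), the latter landing in \(cDc^{-1} = D\). Both \(D\) and \(D'\) are \NR-groups, so \zcref{lem:nilptauswap} gives
\[ g(cD) = \tau(\restr{\alpha\iota_c\beta}{D'}) = \tau(\restr{\iota_{\alpha(c)}\alpha\beta}{D'}) = f(\bar\alpha(cD)). \]
By symmetry, \(f(c'D') = g(\bar\beta(c'D'))\).

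\textbf{Step 2 (constancy on twisted classes).} Next I show that \(g\) is constant on \(\theta\)-twisted conjugacy classes of \(Q\), and similarly \(f\) on \(\theta'\)-classes of \(Q'\). For \(x \in C\) we have \(\iota_x(\iota_c\beta\alpha)\iota_x^{-1} = \iota_{xc\beta\alpha(x)^{-1}}\beta\alpha\), and \(\iota_x\) restricts to an automorphism of \(D\) because \(D\) is normal. \zcref{lem:invariances}(ii) then gives \(g(cD) = g(xc\beta\alpha(x)^{-1}D)\).

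\textbf{Step 3 (rewriting the averages).} For a finite group, the \(\theta\)-twisted class of \(q\) has \(\card{Q}/\card{\Fix(\iota_q\theta)}\) elements, since the stabiliser of \(q\) is \(\set{x}{x = q\theta(x)q^{-1}}\). The left-hand side therefore equals
\[ \sum_{[q]_\theta \in \R[\theta]} \frac{g(q)}{\card{\Fix(\iota_q\theta)}}, \]
and the right-hand side equals the analogous sum for \(\theta'\) and \(f\).

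\textbf{Step 4 (matching the sums).} As in the proof of \zcref{lem:nilptauswap}, the map \([q]_\theta \mapsto [\bar\alpha(q)]_{\theta'}\) is a bijection \(\R[\theta] \to \R[\theta']\). Its inverse is induced by \(\bar\beta\), using \([q]_\theta = [\theta(q)]_\theta\). Step 1 shows that the numerators correspond, since \(g(q) = f(\bar\alpha(q))\).

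It remains to check that \(\card{\Fix(\iota_q\bar\beta\bar\alpha)} = \card{\Fix(\iota_{\bar\alpha(q)}\bar\alpha\bar\beta)}\). Writing \(a \coloneq \bar\alpha\) and \(b \coloneq \iota_q\bar\beta\), we have \(\iota_q\bar\beta\bar\alpha = ba\) and \(\iota_{\bar\alpha(q)}\bar\alpha\bar\beta = ab\). The general fact that \(a\) maps \(\Fix(ba)\) bijectively onto \(\Fix(ab)\), with inverse \(b\), settles this.

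I expect the main obstacle to be this bookkeeping of stabiliser sizes, so that the weights \(1/\card{\Fix}\) match. The map \(\bar\alpha\) need not be bijective, so one cannot simply transport the average from \(Q\) to \(Q'\) coset by coset. Grouping the terms by twisted classes is exactly what makes the two averages comparable.
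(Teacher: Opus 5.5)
Your proof is correct, but it is organised differently from the paper's. The paper applies \zcref{lem:nilptauswap} to the pair \(\restr{\iota_c\beta}{D'}\), \(\restr{\iota_{c'}\alpha}{D}\), with independent twists on both sides. This gives \(\tau(\restr{\iota_{c\beta(c')}\beta\alpha}{D}) = \tau(\restr{\iota_{c'\alpha(c)}\alpha\beta}{D'})\) for all \(c \in C\) and \(c' \in C'\). It then averages over \(C/D \times C'/D'\). Right multiplication by \(\beta(c')\) permutes \(C/D\), and right multiplication by \(\alpha(c)\) permutes \(C'/D'\), so each side can be reindexed and the double sums swapped. This Fubini-type trick never looks at twisted conjugacy classes or stabilisers, and it sidesteps the non-bijectivity of \(\bar\alpha\) that you rightly flag.

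You instead use only the one-parameter identity \(g = f \circ \bar\alpha\). You recover the missing freedom from the invariance of \(g\) and \(f\) under twisted conjugation, which is your Step 2 via \zcref{lem:invariances}(ii). In fact the paper's two-parameter identity follows from your Steps 1 and 2: \(g(c\beta(c')D) = f(\alpha(c)\alpha\beta(c')D') = f(c'\alpha(c)D')\).

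The cost of your route is the orbit-stabiliser bookkeeping in Steps 3 and 4. This consists of the bijection \(\R[\bar\beta\bar\alpha] \to \R[\bar\alpha\bar\beta]\) and the equality \(\card{\Fix(ba)} = \card{\Fix(ab)}\), both of which you handle correctly. The paper's argument is shorter and needs neither ingredient. Yours yields a finer statement: the two averages agree class by class under the bijection of Reidemeister classes of the finite quotients. In effect it is a weighted refinement of \(R(\bar\beta\bar\alpha) = R(\bar\alpha\bar\beta)\).
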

	\begin{proof}
		For \(c \in C\), \(c' \in C'\), we apply \zcref{lem:nilptauswap} to the homomorphisms \(\restr{\iota_c\beta}{D'} \colon D' \to D\) and \(\restr{\iota_{c'}\alpha}{D} \colon D \to D'\). This gives us that
		\[ \tau(\restr{\iota_{c\beta(c')}\beta\alpha}{D}) = \tau(\restr{\iota_{c'\alpha(c)}\alpha\beta}{D'}).\]
		For a fixed \(c'\), the map \(cD \mapsto c\beta(c')D\) is a bijection of \(C/D\), and similarly for the map
		\(c'D' \mapsto c'\alpha(c)D'\) with \(c\) fixed. Therefore,
		\begin{align*}
			\frac{1}{\ind{C}{D}} \sum_{cD \in C/D} \tau( \restr{\iota_c\beta\alpha}{D})
			&= 
			\frac{1}{\ind{C'}{D'}}  \sum_{c'D' \in C'/D'} \frac{1}{\ind{C}{D}} \sum_{cD \in C/D} \tau( \restr{\iota_{c\beta(c')}\beta\alpha}{D}) \\
			&= 
			\frac{1}{\ind{C}{D}} \sum_{cD \in C/D} \frac{1}{\ind{C'}{D'}} \sum_{c'D' \in C'/D'} \tau(\restr{\iota_{c'\alpha(c)}\alpha\beta}{D'}) \\
			&= 
			\frac{1}{\ind{C'}{D'}} \sum_{c'D' \in C'/D'} \tau(\restr{\iota_{c'}\alpha\beta}{D'}).\qedhere
		\end{align*}
	\end{proof}
	
	We now apply these results to the periodic orbits of \(P_\varphi\).
	
	\begin{proposition}
		\label{prop:periodicaverages}
		Let \(G\) be a virtually polycyclic group, let \(\varphi \in \End(G)\), and let \(\mathcal{O}\) be a periodic orbit of \(P_\varphi\) of length \(l\). There is a sequence \((A_{k,\mathcal{O}})_{k \in \NN}\) of integers such that every class in \(\mathcal{O}\) has summand \(A_{k,\mathcal{O}}\) in the definition of \(\TR(\varphi^{kl})\). Moreover, its associated zeta function
		\[
		Z_{\mathcal{O}}(z) \coloneq \exp\sum_{k=1}^\infty \frac{A_{k,\mathcal{O}}}{k} z^k
		\]
		exists and is rational.
	\end{proposition}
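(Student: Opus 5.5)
Fix a representative \(t_0\) of one class in \(\mathcal{O}\) and set \(t_i\) so that \([t_i]_G = P_\varphi^i([t_0]_G)\), indices taken mod \(l\). Since \([\varphi(t_{i})]_G = [t_{i+1}]_G\), we may choose elements \(h_i \in G\) with \(t_{i+1} = h_i\varphi(t_i)h_i^{-1}\). Put \(\alpha_i \coloneq \iota_{h_i}\varphi \colon C_{t_i} \to C_{t_{i+1}}\); the same computation as in the lemma showing \(C_t\) is \(\varphi_t\)-invariant shows \(\alpha_i\) maps \(C_{t_i}\) into \(C_{t_{i+1}}\), and \(D_{t_i}\) into \(D_{t_{i+1}}\) when \(N\) is a fully invariant finite index \NR-subgroup (chosen via \zcref{prop:fullinvsub}, so it is \(\varphi^{kl}\)-invariant for all \(k\), and by independence of \(N\) we may use it for every power). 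The composite \(\alpha_{i+l-1}\cdots\alpha_{i}\) equals \(\iota_{g}\varphi^l\) for some \(g\) with \(t_i = g\varphi^l(t_i)g^{-1}\), so it is a legitimate choice of \((\varphi^l)_{t_i}\); its \(k\)-th power \((\alpha_{i+l-1}\cdots\alpha_i)^k = \iota_{g'}\varphi^{kl}\) is likewise a legitimate choice of \((\varphi^{kl})_{t_i}\). By \zcref{prop:Tinvariance1}, the summand of \([t_i]_G\) in \(\TR(\varphi^{kl})\) is therefore
\[
S_{k,i} \coloneq \frac{1}{\ind{C_{t_i}}{D_{t_i}}}\sum_{cD_{t_i}\in C_{t_i}/D_{t_i}} \tau\bigl(\restr{\iota_c(\alpha_{i+l-1}\cdots\alpha_i)^k}{D_{t_i}}\bigr).
\]

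To see that \(S_{k,i}\) is independent of \(i\), apply \zcref{lem:tauaverageswapping} with \(C = C_{t_i}\), \(C' = C_{t_{i+1}}\), \(\alpha = \alpha_i\) and \(\beta = (\alpha_{i+l-1}\cdots\alpha_{i+1})\,\theta\), where \(\theta \coloneq (\alpha_{i+l-1}\cdots\alpha_i)^{k-1}\) is moved appropriately: concretely take \(\beta \coloneq \alpha_{i+l-1}\cdots\alpha_{i+1}(\alpha_{i+l}\cdots\alpha_{i+1})^{k-1}\) (indices mod \(l\)), so that \(\beta\alpha_i = (\alpha_{i+l-1}\cdots\alpha_i)^k\) and \(\alpha_i\beta = (\alpha_{i+l}\cdots\alpha_{i+1})^k\). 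The hypotheses \(\alpha(D)\le D'\), \(\beta(D')\le D\) hold by the previous paragraph, and \(D_{t_i}\) are \NR-subgroups by \zcref{lem:admissiblesubs}. Hence \(S_{k,i} = S_{k,i+1}\), and we define \(A_{k,\mathcal{O}} \coloneq S_{k,0}\). One subtlety: the representative \(t_{l}\) produced by the cycle need not equal \(t_0\) but is conjugate to it; I would handle this by simply fixing \(h_{l-1}\) so that \(t_l = t_0\) exactly, which is possible since \([\varphi(t_{l-1})]_G = [t_0]_G\).

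Finally, \(A_{k,\mathcal{O}}\) is exactly the quantity of \zcref{cor:tauaverages} applied to the group \(C_{t_0}\), its normal finite index \NR-subgroup \(D_{t_0}\), and the endomorphism \(\psi \coloneq \alpha_{l-1}\cdots\alpha_0\) of \(C_{t_0}\) (which leaves \(D_{t_0}\) invariant), at exponent \(k\): \(A_{k,\mathcal{O}} = \frac{1}{\ind{C_{t_0}}{D_{t_0}}}\sum \tau(\restr{\iota_c\psi^k}{D_{t_0}})\). That corollary gives integrality and rationality of the associated zeta function \(Z_{\mathcal{O}}\) at once. The main obstacle I anticipate is bookkeeping: ensuring that the \(k\)-th power of the cycle composite is a valid choice of \((\varphi^{kl})_{t_0}\) (i.e.\ equals \(\iota_{g}\varphi^{kl}\) with \(t_0 = g\varphi^{kl}(t_0)g^{-1}\)), which follows by expanding \(\iota_{h}\varphi\iota_{h'}\varphi = \iota_{h\varphi(h')}\varphi^2\) repeatedly and using that each \(\alpha_i\) sends \(t_i\) to \(t_{i+1}\).
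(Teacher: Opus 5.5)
Your proposal is correct and follows essentially the same route as the paper: you transport the summand around the orbit using \zcref{lem:tauaverageswapping}, then apply \zcref{cor:tauaverages} to \(C_{t_0}\), \(D_{t_0}\) and the return map \(\psi\). The differences are cosmetic. The paper takes \(\varphi(t)\) itself as the next representative, so it can use \(\alpha = \restr{\varphi}{C_t}\) and \(\beta = \restr{\iota_g\varphi^{m-1}}{C_{\varphi(t)}}\) with \(m = kl\) directly; you instead fix representatives \(t_i\), twist by \(h_i\) and build the \((k-1)\)-st power into \(\beta\). Both work, and a merely \(\varphi\)-invariant normal \(N\) would already suffice for your argument.
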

	\begin{proof}
		Let \(N\) be as in \zcref{def:tamedReidNr}. We will start by comparing the summands of \([t]_G\) and \([\varphi(t)]_G\) in \(\TR(\varphi^m)\), for some \(m \in \NN\) and some conjugacy class \([t]_G \in \Fix(P_{\varphi}^m)\). We set \(t' \coloneq \varphi(t)\) and choose \(g \in G\) such that \(t = g\varphi^m(t)g^{-1}\).
		
		Consider the homomorphisms
		\[
		\alpha \coloneq \restr{\varphi}{C_t} \colon C_t \to C_{t'},
		\qquad
		\beta \coloneq \restr{\iota_g\varphi^{m-1}}{C_{t'}} \colon C_{t'} \to C_t,
		\]
		and note that \(\alpha(D_t) \leq D_{t'}\) and \(\beta(D_{t'}) \leq D_t\). Composing them both ways gives
		\[
		\beta \alpha = \restr{\iota_g \varphi^m}{C_t}, 
		\qquad
		\alpha \beta = \restr{\iota_{\varphi(g)}\varphi^m}{C_{t'}}.
		\]
		Applying the previous \zcref[nocap,noref]{lem:tauaverageswapping} gives us that the summands of \([t]_G\) and \([t']_G\) are one and the same. Continuing this inductively, the summands of \([t]_G\) and \([\varphi^i(t)]_G\) are the same for every \(i \in \{1, \ldots, m-1\}\).
		
		Now fix \([t]_G \in \mathcal{O}\), choose \(g \in G\) such that \(t = g\varphi^l(t)g^{-1}\) and set \(\psi \coloneq \iota_g\varphi^l\). Then \(\psi(t) = t\), so both \(C_t\) and \(D_t\) are \(\psi\)-invariant. We define
		\[
		A_{k,\mathcal{O}} \coloneq \frac{1}{\ind{C_t}{D_t}} \sum_{cD_t \in C_t/D_t} \tau(\restr{\iota_c\psi^k}{D_t}).
		\]
		We set \(h \coloneq g\varphi^l(g)\cdots \varphi^{(k-1)l}(g)\) such that \( \psi^k = \iota_h \varphi^{kl}\). Since \(\psi^k(t) = t\), \(A_{k,\mathcal{O}}\) is the summand of \([t]_G\) in \(\TR(\varphi^{kl})\), and hence also that of every other class in \(\mathcal{O}\). Finally, applying \zcref{cor:tauaverages} gives that \(A_{k,\mathcal{O}}\) is an integer and that \(Z_{\mathcal{O}}(z)\) exists and is rational.
	\end{proof}
	
	\subsection{Rationality}
	
	\begin{theorem}
		\label{thm:tamedrationality}
		Let \(G\) be a virtually polycyclic group and let \(\varphi \in \End(G)\). Its tamed Reidemeister zeta function \(\TR_\varphi(z)\) exists and can be decomposed as
		\[
		\TR_\varphi(z) = \prod_{\mathcal{O}}Z_{\mathcal{O}}( z^{l(\mathcal{O})} ),
		\]
		where the product runs over the periodic orbits of \(P_\varphi\), \(l(\mathcal{O})\) is the length of the orbit \(\mathcal{O}\), and \(Z_{\mathcal{O}}(z)\) is defined as in \zcref{prop:periodicaverages}. Consequently, \(\TR_\varphi(z)\) is rational.
	\end{theorem}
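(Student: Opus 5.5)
The plan is to compute \(\TR(\varphi^n)\) for each \(n \in \NN\) by grouping the classes in \(\Fix(P_{\varphi^n})\) into periodic orbits of \(P_\varphi\), and then to take logarithms.

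First, \(P_{\varphi^n} = P_\varphi^n\), since \([\varphi^n(t)]_G\) is obtained by iterating \(P_\varphi\). Hence \(\Fix(P_{\varphi^n})\) consists exactly of the classes lying in periodic orbits \(\mathcal{O}\) of \(P_\varphi\) whose length \(l = l(\mathcal{O})\) divides \(n\). By \zcref{prop:segalconjclasses}, \(G\backslash\Tor(G)\) is finite, so there are only finitely many periodic orbits. If \(n = kl\), \zcref{prop:periodicaverages} says that each of the \(l\) classes in \(\mathcal{O}\) contributes the same summand \(A_{k,\mathcal{O}}\) to \(\TR(\varphi^{kl})\). One point needs care here: the summand of a class depends only on the class and on the endomorphism \(\varphi^n\), by \zcref{prop:Tinvariance1}. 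Therefore the value produced in \zcref{prop:periodicaverages}, which uses a particular conjugating element, is the legitimate summand. This gives
\[
\TR(\varphi^n) = \sum_{\mathcal{O}\,:\, l(\mathcal{O}) \mid n} l(\mathcal{O})\, A_{n/l(\mathcal{O}),\mathcal{O}}.
\]

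Next, substitute this into the exponent and reorganise by orbit. Writing \(n = kl\), we get
\[
\sum_{n\ge1}\frac{\TR(\varphi^n)}{n}z^n = \sum_{\mathcal{O}} \sum_{k\ge1} \frac{l A_{k,\mathcal{O}}}{kl} (z^l)^k = \sum_{\mathcal{O}} \log Z_{\mathcal{O}}(z^{l}).
\]
The interchange of sums is harmless because the outer sum over \(\mathcal{O}\) is finite. For convergence, each \(Z_{\mathcal{O}}\) exists, meaning \(\limsup_k \abs{A_{k,\mathcal{O}}}^{1/k} < \infty\). It follows that \(\abs{\TR(\varphi^n)} \le \sum_{\mathcal{O}} l\,\abs{A_{n/l,\mathcal{O}}}\) grows at most exponentially in \(n\). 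So the series defining \(\TR_\varphi(z)\) has positive radius of convergence, and the rearrangement holds on a small disc.

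Exponentiating yields the stated product formula \(\TR_\varphi(z) = \prod_{\mathcal{O}} Z_{\mathcal{O}}(z^{l(\mathcal{O})})\). Each \(Z_{\mathcal{O}}\) is rational by \zcref{prop:periodicaverages}, and composing a rational function with \(z^l\) keeps it rational. A finite product of rational functions is rational, so \(\TR_\varphi(z)\) is rational.

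The only genuinely delicate point is the bookkeeping identification of summands. A class in \(\mathcal{O}\) lies in \(\Fix(P_\varphi^n)\) exactly when \(l \mid n\). Its summand in \(\TR(\varphi^n)\) must be \(A_{n/l,\mathcal{O}}\), with the index taken as \(k = n/l\) rather than \(n\). This is exactly what \zcref{prop:periodicaverages} asserts, so I expect no real obstacle beyond checking these index conventions.
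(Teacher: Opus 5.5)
Your proposal is correct and follows essentially the same route as the paper. Both arguments group \(\Fix(P_{\varphi^n})\) into periodic orbits with \(l(\mathcal{O}) \mid n\), use \zcref{prop:periodicaverages} to get \(\TR(\varphi^n)=\sum l(\mathcal{O})A_{n/l(\mathcal{O}),\mathcal{O}}\), bound the growth using the finitely many orbits from \zcref{prop:segalconjclasses}, and rearrange the finite sum into \(\prod_{\mathcal{O}} Z_{\mathcal{O}}(z^{l(\mathcal{O})})\). Your extra appeal to \zcref{prop:Tinvariance1}, to justify that the specific conjugating element used in \zcref{prop:periodicaverages} yields the legitimate summand, is a sound detail that the paper leaves implicit.
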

	\begin{proof}
		Since \(P_{\varphi^k}=P_\varphi^k\), a conjugacy class \([t]_G\) belongs to \(\Fix(P_{\varphi^k})\) precisely when it lies in a periodic orbit of \(P_\varphi\) whose length divides \(k\). Hence \zcref{prop:periodicaverages} gives
		\begin{equation}
			\label{eqn:Tperiodicorbits}
			\TR(\varphi^k)=
			\sum_{\substack{\mathcal{O} \text{ periodic}\\ l(\mathcal{O}) \mid k}}
			l(\mathcal{O})A_{k/l(\mathcal{O}),\mathcal{O}}.
		\end{equation}
		By \zcref{prop:periodicaverages}, each sequence \((A_{m,\mathcal O})_{m \in \NN}\) has an associated zeta function. Thus, for each periodic orbit \(\mathcal{O}\), there exist constants \(C_{\mathcal{O}}>0\) and \(E_{\mathcal{O}} \geq 1\) such that for every \(m \in \NN\), \(\abs{ A_{m,\mathcal{O}}} \leq C_{\mathcal{O}} E_{\mathcal{O}}^m\). There are only finitely many periodic orbits \(\mathcal{O}\) by \zcref{prop:segalconjclasses}, so we can take
		\[
		C \coloneq \sum_{\mathcal{O}} l(\mathcal{O})C_{\mathcal{O}},
		\qquad
		E \coloneq \max_{\mathcal{O}} E_{\mathcal{O}}.
		\]
		Using \zcref[nocap,abbrev]{eqn:Tperiodicorbits} we obtain
		\[
		0
		\leq \TR(\varphi^k)
		\leq \sum_{\substack{\mathcal{O} \text{ periodic}\\ l(\mathcal{O}) \mid k}} l(\mathcal{O})C_{\mathcal{O}} E_{\mathcal{O}}^{k/l(\mathcal{O})}
		\leq CE^k.
		\]
		Consequently, the series defining \(\TR_\varphi(z)\) converges absolutely for \(|z|< 1/E\).
		
		Using \zcref[nocap,abbrev]{eqn:Tperiodicorbits} once more, we get that
		\begin{align*}
			\TR_\varphi(z)
			&= \exp \sum_{k=1}^\infty \frac{\TR(\varphi^k)}{k} z^k \\
			&= \exp \sum_{k=1}^\infty \sum_{\substack{\mathcal{O} \text{ periodic}\\ l(\mathcal{O}) \mid k}} l(\mathcal{O}) \frac{A_{k/l(\mathcal{O}),\mathcal{O}}}{k} z^k \\
			&= \exp \sum_{\mathcal{O}} \sum_{m=1}^\infty \frac{A_{m,\mathcal{O}}}{m} z^{ml(\mathcal{O})} \\
			&= \prod_{\mathcal{O}} Z_{\mathcal{O}}(z^{l(\mathcal{O})}).
		\end{align*}
		Each factor is rational by \zcref{prop:periodicaverages}, hence so is their finite product.
	\end{proof}

    At the end of \zcref{sec:TamedRNr}, we mentioned cases in which the tamed Reidemeister number equals the (ordinary) Reidemeister number or the Nielsen number. These comparisons can be extended to the associated zeta functions.
	
	\begin{corollary}
		\label{cor:reidrationality}
		Let \(G\) be a virtually polycyclic group and let \(\varphi \in \End(G)\) be tame. Then \(R_\varphi(z) = \TR_\varphi(z)\), and in particular \(R_\varphi(z)\) is rational.
	\end{corollary}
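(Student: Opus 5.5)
Since \(\varphi\) is tame, \(R(\varphi^k) < \infty\) for every \(k \in \NN\). The plan is to apply \zcref{prop:reidistamereidwhenfinite} to each power separately and then invoke \zcref{thm:tamedrationality}.

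First, for every \(k \geq 1\), \(\varphi^k\) is an endomorphism of the virtually polycyclic group \(G\) with finite Reidemeister number. \zcref{prop:reidistamereidwhenfinite} then gives \(\TR(\varphi^k) = R(\varphi^k)\). The two sequences \((R(\varphi^k))_{k}\) and \((\TR(\varphi^k))_{k}\) therefore coincide term by term.

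Next, \zcref{thm:tamedrationality} says the power series \(\sum_k \TR(\varphi^k) z^k/k\) converges on a disc \(\abs{z} < 1/E\), and that \(\TR_\varphi(z)\) is a rational function there. Since the coefficients agree, the series defining \(R_\varphi(z)\) converges on the same disc, and \(R_\varphi(z) = \TR_\varphi(z)\) as holomorphic functions. Hence \(R_\varphi(z)\) is rational, with the same expression \(\prod_{\mathcal{O}} Z_{\mathcal{O}}(z^{l(\mathcal{O})})\).

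There is no real obstacle. The only subtlety is that tameness is needed for \emph{every} power \(\varphi^k\), not just for \(\varphi\), so that \zcref{prop:reidistamereidwhenfinite} applies termwise. This is exactly the definition of tame.
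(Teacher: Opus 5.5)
Your proposal is correct and is the paper's own argument: tameness makes every \(R(\varphi^k)\) finite, so \zcref{prop:reidistamereidwhenfinite} gives \(R(\varphi^k)=\TR(\varphi^k)\) for each \(k\), hence \(R_\varphi(z)=\TR_\varphi(z)\). Rationality then follows from \zcref{thm:tamedrationality}.
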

	\begin{proof}
		Since \(\varphi\) is tame, \(R(\varphi^k) < \infty\) for every \(k\). By \zcref{prop:reidistamereidwhenfinite}, \(R(\varphi^k) = \TR(\varphi^k)\), hence also \(R_\varphi(z) = \TR_\varphi(z)\). Rationality then follows from \zcref{thm:tamedrationality}.
	\end{proof}
	
	\begin{corollary}
		\label{cor:nielsrationality}
		Let \(M\) be a compact infra-solvmanifold and let \(f \colon M \to M\) be a self-map. Then its Nielsen zeta function \(N_f(z)\) is rational.
	\end{corollary}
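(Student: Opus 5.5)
The plan is to reduce the statement to \zcref{thm:tamedrationality} by identifying the Nielsen zeta function with the tamed Reidemeister zeta function of the induced endomorphism. Set \(G \coloneq \pi_1(M)\) and \(\varphi \coloneq f_*\). This is a virtually polycyclic group: the fundamental group of a compact infra-solvmanifold contains a finite index subgroup that is the fundamental group of a compact solvmanifold, hence polycyclic. It is also torsion-free. The first step is to observe that \(f^k\) is again a self-map of \(M\) and that it induces \(\varphi^k\) on \(G\), after fixing a base point and a base path in the usual way. One subtlety needs addressing here. The induced endomorphism is only defined up to composition with an inner automorphism, depending on the choice of path from the base point to its image. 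However, \(\TR\) is invariant under replacing \(\varphi\) by \(\iota_g\varphi\).

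The invariance can be checked as follows. In the torsion-free case the formula for \(\TR\) collapses, as in \zcref{rem:formulabecomesavg}, to an average of \(\tau(\restr{\iota_c\varphi}{N})\) over \(cN \in G/N\). Replacing \(\varphi\) by \(\iota_g\varphi\) only permutes the cosets. Alternatively one can appeal directly to \zcref{prop:averagingformula}, whose formula holds for any choice of induced endomorphism. So for each \(k\) we may apply \zcref{prop:nielsistamereidwhentf} to the self-map \(f^k\) with induced endomorphism \(\varphi^k\), or \(\iota_{h}\varphi^k\) for a suitable \(h\). This gives
\[ N(f^k) = \TR(\varphi^k) \quad \text{for every } k \in \NN. \]

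Substituting these equalities into the definitions yields \(N_f(z) = \TR_\varphi(z)\) as formal power series. By \zcref{thm:tamedrationality} the right-hand side converges on a disc around \(0\) and is a rational function, so \(N_f(z)\) is rational.

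The only step requiring care is the bookkeeping about the induced endomorphism of the iterate. Specifically, one must check that the endomorphism induced by \(f^k\) agrees, up to an inner automorphism, with the \(k\)-th power of the one induced by \(f\). One must also confirm that both \(N(f^k)\) and \(\TR\) are insensitive to this inner ambiguity. The first is standard covering-space theory; the second follows from the averaging formulas as indicated. No tameness assumption is needed, since \zcref{prop:nielsistamereidwhentf} holds for arbitrary self-maps.
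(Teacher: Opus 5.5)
Your proposal is correct and follows the paper's proof: you apply \zcref{prop:nielsistamereidwhentf} to each iterate \(f^k\) to get \(N_f(z) = \TR_{f_*}(z)\), and then invoke \zcref{thm:tamedrationality}. Your extra bookkeeping on the inner-automorphism ambiguity is sound, though the paper leaves it implicit. In the torsion-free case this invariance is exactly \zcref{prop:Tinvariance1}(2) applied to \(t = 1\), and it can be avoided altogether by fixing a lift of \(f\) to the universal cover, so that \((f^k)_* = f_*^k\) on the nose.
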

	\begin{proof}
		By \zcref{prop:nielsistamereidwhentf}, \(N(f^k) = \TR(f_*^k)\) for every \(k \in \NN\). Thus \(N_f(z) = \TR_{f_*}(z)\), and the latter is rational by \zcref{thm:tamedrationality}.
	\end{proof}
	
	We finish this section by relating our results to previous work.
	
	\begin{remark}
		For a finite group \(G\), the product in \zcref{thm:tamedrationality} reduces to the formula of Fel'shtyn and Hill \cite[Thm.~6]{fh94-a}. Indeed, if \(N = 1\), then \(D_t = 1\) and \(\tau(\id_{\{1\}}) = 1\). Hence \(A_{k,\mathcal{O}} = 1\) for every \(k\), and \(Z_{\mathcal{O}}(z) = (1-z)^{-1}\). Thus:
		\[
		R_\varphi(z) = \TR_\varphi(z)
		= \prod_{\mathcal{O}}\frac{1}{1-z^{l(\mathcal{O})}}.
		\]
	\end{remark}
	
	\begin{remark}
		Deré proved that the Reidemeister zeta function of a tame endomorphism of a virtually polycyclic group always coincides with that of a tame, injective endomorphism of a virtually nilpotent group \cite[Cor.~2.4]{dere25-a}. Thus, rationality of Reidemeister zeta functions on virtually polycyclic groups also follows from the combination of \zcref{cor:reidrationality}, restricted to virtually nilpotent groups, and Deré's result.
	\end{remark}

	\section{Future work}
	\label{sec:futurework}
	
	We conclude this paper by suggesting some directions in which the study of tamed Reidemeister numbers can be continued.
	
	A first direction is to extend the results of this paper to bi-twisted conjugacy, which has received attention recently for e.g.\@ finite \cite{st26-a} and nilpotent \cite{fk22-a} groups. Given \(\varphi,\psi \in \End(G)\), the bi-twisted conjugacy classes are the equivalence classes of the relation
	\[
	g_1 \sim_{\varphi,\psi} g_2 \iff \exists h \in G : g_1 = \varphi(h)g_2\psi(h)^{-1}.
	\]
	Their number is called the \emph{coincidence Reidemeister number} \(R(\varphi,\psi)\). Both of the formulas unified by \zcref{thm:Rformula} have ``bi-twisted counterparts''. On the one hand, in \cite[Cor.~3.2]{st26-a}, Senden and the author obtain the following formula for finite groups:
	\[
	R(\varphi,\psi)
	= \sum_{\substack{[t]_G\\{[\varphi(t)]_G=[\psi(t)]_G}}}
	\frac{\card{[t]_G}}{\card{[\varphi(t)]_G}}.
	\]
	On the other hand, averaging formulas for infinite groups are also known; see e.g.\@ \cite[Thm.~4.9]{kl07-a} and \cite[Thm.~4.2]{hlp12-a}. It is therefore natural to ask whether \zcref{thm:Rformula} admits a bi-twisted analogue, which reduces to the aforementioned formulas in their specific settings.
	
	Moreover, \emph{coincidence Reidemeister and Nielsen zeta functions} have also been defined, and here rationality already fails in very simple cases \cite[Ex.~4.5]{fk22-a}. By imposing additional restrictions, though, rationality has been obtained in certain situations; see e.g.\@ \cite[Thm.~3.8(3)]{bbf26-a} and \cite[Sec.~7]{fs26-a}. These results suggest that the study of ``compatibility conditions'' on a (tame) pair of endomorphisms or self-maps may be a worthwhile endeavour. In particular, if the tamed Reidemeister number can be extended to bi-twisted conjugacy, it would be nice to see if these same conditions then guarantee rationality of the tamed Reidemeister zeta function.
	
	\medskip
	
	A second direction is to extend the definition of the tamed Reidemeister number to a larger class of groups. \zcref{def:tamedReidNr} depends on \zcref[nosort]{prop:fullinvsub,prop:segalconjclasses}, and is therefore limited to virtually polycyclic groups. Perhaps we can extend the definition of \(\tau\), and then find a larger class of groups for which \zcref{prop:segalconjclasses} holds and which admit finite index subgroups on which \(\tau\) is then defined. Or, even better, perhaps there is another way of defining \(\TR(\varphi)\) without the use of a formula like \zcref[nocap,abbrev]{eqn:TRformula}, such that it still coincides with \(R(\varphi)\) and \(N(f)\) when relevant.

	\printbibliography
	
\end{document}